\newtheorem{theorem}{\bf Theorem}[section]
\newtheorem{lemma}[theorem]{\bf Lemma}
\newtheorem{prop}[theorem]{\bf Proposition}
\newtheorem{coro}[theorem]{\bf Corollary}
\newenvironment{proof}{\noindent{\em Proof:}}{\quad \hfill$\Box$\vspace{2ex}}
\def \bN {\Bbb N}
\def \bZ {\Bbb Z}
\def \bR {\Bbb R}
\def \bC {\Bbb C}
\def \bx {{\bf x}}
\def \by {{\bf y}}
\def \bz {{\bf z}}
\def \cB {{\cal B}}
\def \cH {{\cal H}}
\def \cE {{\cal E}}
\def \cW {{\cal W}}
\def \and {\, \mbox{\rm and}\, }
\def \sinc {\,{\rm sinc}\,}
\def \span {\,{\rm span}\,}
\def \supp {\,{\rm supp}\,}
\newcommand{\Rmnum}[1]{\expandafter\@slowromancap\romannumeral #1@}
\begin{document}

\title{\bf On the Inclusion Relation of Reproducing Kernel Hilbert Spaces}
\author{Haizhang Zhang\thanks{School of Mathematics
and Computational Science and Guangdong Province Key Laboratory of Computational Science,
 Sun Yat-sen University, Guangzhou 510275, P. R. China. E-mail address: {\it zhhaizh2@sysu.edu.cn}. Supported in part by Guangdong Provincial Government of China through the
``Computational Science Innovative Research Team" program.}\quad and \quad Liang Zhao
\thanks{Department of Mathematics, Syracuse University, Syracuse, NY 13244, USA. E-mail address: {\it lzhao04@syr.edu}. Supported in part by US Air Force Office of Scientific Research under grant FA9550-09-1-0511.}}
\date{}
\maketitle

\begin{abstract}To help understand various reproducing kernels used in applied sciences, we investigate the inclusion relation of two reproducing kernel Hilbert spaces. Characterizations in terms of feature maps of the corresponding reproducing kernels are established. A full table of inclusion relations among widely-used translation invariant kernels is given. Concrete examples for Hilbert-Schmidt kernels are presented as well. We also discuss the preservation of such a relation under various operations of reproducing kernels.
Finally, we briefly discuss the special inclusion with a norm equivalence.
\end{abstract}

\noindent{\bf Keywords}: inclusion, embedding, refinement, reproducing kernels, reproducing kernel Hilbert spaces

\vspace*{0.3cm}

\section{Introduction}
\setcounter{equation}{0}

Reproducing kernel Hilbert spaces (RKHS) are Hilbert spaces of functions on which point evaluations are always continuous linear functionals. They are the natural choice of background spaces for many applications. First of all, thanks to the existence of an inner product, Hilbert spaces are the normed vector spaces that are well-understood and can be handled best. Secondly, the inputs for many application-oriented algorithms are usually modeled as the sample data of some desirable but unknown function. Requiring the sampling process to be stable seems to be a necessity. Mathematically, this is synonymous with desiring point evaluation functionals to be bounded. For these reasons, RKHS are widely applicable in probability and statistics \cite{Berlinet,Sriperumbudur2010}, dimension reduction \cite{Fukumizu2004}, numerical study of differential equations \cite{Cui2007,Franke1998}, generalizations of the Shannon sampling theory \cite{NW,Zhanga}, and approximation from scattered data \cite{Wendland}. Moreover, an RKHS possesses a unique function, named a reproducing kernel, which represents point evaluations on the space. Reproducing kernels are able to measure the similarity between inputs and could save the calculation of inner products in a feature space \cite{SS}. This gives birth to the ``kernel trick" in machine learning and makes RKHS the popular underlying feature spaces for applications in the field. As a result, reproducing kernel based methods are dominant in machine learning \cite{CuckerSmale,Evgeniou2000,SS,SC,Vapnik1998}.

Despite the wide applications of RKHS, there are some important theoretical issues that are not well-understood. This paper is devoted to the inclusion relation between RKHS, that is, given two reproducing kernels, we are interested in whether the RKHS of one reproducing kernel is contained by the RKHS of the other. The clarification of this problem is helpful to understand the structure of RKHS and hence is contributive to the theory of reproducing kernels \cite{Aronszajn}. For instance, the relation is needed in building a multi-resolution decomposition of RKHS. Besides, the study could provide guidelines to the choice of reproducing kernels in machine learning. There are many reproducing kernels in the literature. In a particular application, the selection of reproducing kernels is usually critical to the success of a learning algorithm. While there are no well-recognized guidelines in making such a decision, avoiding overfitting or underfitting is usually the first principle. When overfitting or underfitting occurs, a remedy is to change the current reproducing kernel so that the RKHS of the new kernel becomes smaller or larger compared to that of the existing kernel. Understanding the inclusion relation between RKHS could help achieve such an update of reproducing kernels.

Three characterizations of the inclusion relation of RKHS were established before 1970s \cite{Aronszajn,DR,Ylvisaker}. With the advent of machine learning in 1990s, there has been increasing interest in reproducing kernels and RKHS. Many concrete reproducing kernels have emerged in the literature and in applications. Most of them can be conveniently represented by a feature map, which was unknown in the past studies \cite{Aronszajn,DR,Ylvisaker}. The purpose of this paper is to provide a systematic study of the inclusion relation of RKHS with focus on the concrete examples of RKHS appeared in machine learning. Recent references \cite{XZ,XZ2} studied the embedding relation of RKHS, that is, an equal norm requirement is imposed. As shown by the examples therein, the requirement that two RKHS share the same norm on the smaller space might be demanding and rules out many commonly-used RKHS. For example, the RKHS of a Gaussian kernel can not be properly embedded into the RKHS of another translation invariant reproducing kernel of a continuous type. By relaxing the requirement, we shall see more applications and have more structural results.

The outline of the paper is as follows. We shall discuss characterizations of the inclusion relation in the next section. Sections 3 and 4 are devoted to the investigation of concrete translation invariant and Hilbert-Schmidt reproducing kernels, respectively. Particularly, we shall establish a full table of inclusion relations among popular translation invariant reproducing kernels in Section 3. In Section 5, we discuss the preservation of the relation under various operations of reproducing kernels. In the last section, we shall briefly discuss the special inclusion relation where a norm equivalence is required.

\section{Characterizations}
\setcounter{equation}{0}

We start with introducing some basics of the theory of reproducing kernels \cite{Aronszajn}. Let $X$ be a prescribed set, which is often referred to as an input space in machine learning. A {\it reproducing kernel} (or kernel for short) $K$ on $X$ is a function from $X\times X$ to $\bC$ such that for all finite pairwise distinct inputs $\bx:=\{x_j:j\in\bN_n\}\subseteq X$, the kernel matrix
$$
K[\bx]:=\left[K(x_j,x_k):j,k\in\bN_n\right]
$$
is hermitian and positive semi-definite. Here, for the simplicity of enumerating with finite sets, we denote for each $n\in\bN$ by $\bN_n:=\{1,2,\ldots,n\}$. A reproducing kernel $K$ on $X$ corresponds to a unique RKHS, denoted by $\cH_K$, such that $K(x,\cdot)\in\cH_K$ for all $x\in X$ and
\begin{equation}\label{reproducing}
f(x)=(f,K(x,\cdot))_{\cH_K}\quad \mbox{ for all }f\in\cH_K,\ x\in X,
\end{equation}
where $(\cdot,\cdot)_{\cH_K}$ denotes the inner product on $\cH_K$. There is a characterization of reproducing kernels in terms of feature maps. A {\it feature map} for a kernel $K$ on $X$ is a mapping from $X$ to another Hilbert space $\cW$ such that
\begin{equation}\label{featuremap}
K(x,y)=(\Phi(x),\Phi(y))_\cW,\ \ x,y\in X.
\end{equation}
The space $\cW$ is call a {\it feature space} for kernel $K$. One observes from (\ref{reproducing}) that
$$
K(x,y)=(K(x,\cdot),K(y,\cdot))_{\cH_K},\ \ x,y\in X.
$$
Thus, $\Phi(x):=K(x,\cdot)$, $x\in X$ and $\cW:=\cH_K$ is a pair of feature map and feature space for $K$. The RKHS of a reproducing kernel can be easily identified once a feature map representation is available. The following result is well-known in machine learning community \cite{Opfer,SS,XZ}.

For a feature map $\Phi:X\to\cW$, we shall denote by $P_\phi$ the orthogonal projection from $\cW$ onto the linear span $\span\Phi(X)$ of $\Phi(X)$.

\begin{lemma}\label{featuremapconstruction}
If $K$ is a kernel on $X$ given by (\ref{featuremap}) by a feature map $\Phi$ from $X$ to $\cW$ then
$\cH_K = \{(\Phi(\cdot), u)_{\cW} : u \in \cW\}$ with the inner product
 \begin{equation}\label{innerproductbyfeaturemap}
 (u,\Phi(\cdot))_{\cW}, (v,\Phi(\cdot))_{\cW})_{\cH_K} = (P_{\Phi}u, P_{\Phi}v)_{\cW}, \quad u, v \in \cW.
 \end{equation}
In particular, if $\span\Phi(X)$ is dense in $\cW$ then $\cH_K$ is isometrically isomorphic to $\cW$ through the linear mapping $(u,\Phi(\cdot))_\cW\to u$.
\end{lemma}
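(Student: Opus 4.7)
The plan is to construct the claimed space $\cV$ as a Hilbert space with the proposed inner product, verify that $K$ reproduces point evaluation on it, and then appeal to the uniqueness of the RKHS for a given reproducing kernel.

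First I would set $T : \cW \to \bC^X$ by $(Tu)(x) := (\Phi(x), u)_\cW$, and let $\cV := T(\cW)$. The kernel of $T$ consists of those $u$ with $(\Phi(x), u)_\cW = 0$ for every $x \in X$, which is precisely $(\span \Phi(X))^\perp = \ker P_\Phi$. Consequently, the formula in (\ref{innerproductbyfeaturemap}) does not depend on the representatives chosen and defines an inner product on $\cV$.

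Next I would show $\cV$ is complete. The map $T$ descends to a bijection $\widetilde T : \ran P_\Phi \to \cV$ via $\widetilde T(P_\Phi u) := Tu$, and by construction $\widetilde T$ is an isometry from $\ran P_\Phi$ (with the restricted $\cW$-inner product) onto $\cV$. Since $\ran P_\Phi = \overline{\span \Phi(X)}$ is closed in $\cW$, it is complete, and therefore so is $\cV$. In particular, when $\span \Phi(X)$ is dense in $\cW$, $P_\Phi$ equals the identity and $T$ itself is an isometric isomorphism from $\cW$ onto $\cV$, which delivers the final assertion of the lemma.

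It then remains to check that $K$ is the reproducing kernel of $\cV$: taking $u = \Phi(x)$ together with the hermitian symmetry $K(x, y) = \overline{K(y, x)}$ exhibits $K(x, \cdot)$ as an element of $\cV$, while the reproducing identity $(Tu, K(x, \cdot))_\cV = (Tu)(x)$ reduces via (\ref{innerproductbyfeaturemap}) to a direct manipulation exploiting self-adjointness of $P_\Phi$ together with $\Phi(x) \in \span \Phi(X)$. Uniqueness of the RKHS for a given reproducing kernel then forces $\cV = \cH_K$. I expect the main obstacle to be the careful bookkeeping of complex conjugation — in particular, confirming that $K(x, \cdot)$ itself (and not its conjugate) is of the form $Tu$ — and the verification that the quotient structure on $\cV$ is truly complete rather than only pre-Hilbert; once these are handled, everything else is a routine verification.
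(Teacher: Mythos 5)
The paper offers no proof of this lemma at all: it is stated as ``well-known'' with citations to \cite{Opfer,SS,XZ}, so there is no in-paper argument to compare against. Your outline is precisely the standard proof from those references --- factor the evaluation map $T$ through $\cW/\ker T$ with $\ker T=(\span\Phi(X))^{\perp}$, transport the inner product of $\ran P_{\Phi}=\overline{\span}\,\Phi(X)$ (a closed, hence complete, subspace) to $\cV:=T(\cW)$, check the reproducing property, and invoke uniqueness of the RKHS of a given kernel --- and the structure is entirely sound.

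The one point that needs repair is exactly the conjugation issue you flagged, and your proposed resolution does not quite work. With your convention $(Tu)(x):=(\Phi(x),u)_{\cW}$ one gets $(T\Phi(x))(y)=(\Phi(y),\Phi(x))_{\cW}=K(y,x)=\overline{K(x,y)}$, so $T\Phi(x)=\overline{K(x,\cdot)}$; Hermitian symmetry only restates this and does not place $K(x,\cdot)$ itself in $T(\cW)$, which is a complex-linear subspace of $\bC^{X}$ not generally closed under conjugation. The clean fix is to define $(Tu)(x):=(u,\Phi(x))_{\cW}$ instead: then $T\Phi(x)=K(x,\cdot)$ on the nose, and the reproducing identity follows from
$(Tu,K(x,\cdot))_{\cH_K}=(P_{\Phi}u,P_{\Phi}\Phi(x))_{\cW}=(u,\Phi(x))_{\cW}=(Tu)(x)$,
using self-adjointness of $P_{\Phi}$ and $P_{\Phi}\Phi(x)=\Phi(x)$. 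Note that the lemma as printed is itself internally inconsistent on this point --- the set $\{(\Phi(\cdot),u)_{\cW}:u\in\cW\}$ uses one order of the arguments while the inner-product formula (\ref{innerproductbyfeaturemap}) uses the other --- and it is the latter convention, $f_u=(u,\Phi(\cdot))_{\cW}$, that makes all the formulas consistent. With that single substitution your argument goes through verbatim.
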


As an example, we look at the sinc kernel
$$
K(x,y)=\sinc(x-y):=\prod_{j=1}^d \frac{\sin\pi(x_j-y_j)}{\pi(x_j-y_j)},\ \ x,y\in\bR^d.
$$
It can be represented as the Fourier transform of $\frac1{\sqrt{2\pi}^d}\chi_{[-\pi,\pi]^d}$, where $\chi_A$ is the characteristic function of a subset $A\subseteq\bR^d$. In this paper, we adopt the following forms of the Fourier transform $\hat{f}$ and the inverse Fourier transform $\check{f}$ of a Lebesgue integrable function $f\in L^1(\bR^d)$
$$
\hat{f}(\xi):=\biggl(\frac1{\sqrt{2\pi}}\biggr)^d\int_{\bR^d}f(x)e^{-i(x, \xi)}dx,\ \ \check{f}(\xi):=\biggl(\frac1{\sqrt{2\pi}}\biggr)^d\int_{\bR^d}f(x)e^{i(x,\xi)}dx,\ \ \xi\in\bR^d.
$$
Here $(x, \xi)$ is the standard inner product on $\bR^d$. Thus, one sees that
\begin{equation}\label{sinc}
\sinc(x-y)=\frac1{(2\pi)^d}\int_{[-\pi,\pi]^d}e^{-i(\xi,x-y)}d\xi,\ \ x,y\in\bR^d.
\end{equation}
Thus $\cW:=L^2([-\pi,\pi]^d)$ and $\Phi(x):=(\frac1{\sqrt{2\pi}})^de^{-i(\xi, x)}$, $x\in\bR^d$ satisfy (\ref{featuremap}). Lemma \ref{featuremapconstruction} tells that $\cH_K$ is the space of continuous square integrable functions on $\bR^d$ whose Fourier transforms are supported on $[-\pi,\pi]^d$ and the inner product on $\cH_K$ inherits from that of $L^2(\bR^d)$. This is well-known. We use it to illustrate the application of Lemma \ref{featuremapconstruction}.

Given two kernels $K,G$ on a prescribed input space $X$, the corresponding RKHS $\cH_K,\cH_G$ can usually be identified
by Lemma \ref{featuremapconstruction}. The theme of the paper is the set inclusion relation
$\cH_K\subseteq \cH_G$. As point evaluations are continuous on RKHS,
it was observed in \cite{Aronszajn} that if $\cH_K\subseteq\cH_G$ then the identity operator
from $\cH_K$ into $\cH_G$ is bounded. We shall denote by $\beta(K,G)$ the operator norm of this embedding.
A characterization of $\cH_K\subseteq\cH_G$ was also established in \cite{Aronszajn}. Following \cite{Aronszajn}, we write $K \ll G$ if $G - K$ remains a kernel on $X$.

\begin{lemma}\label{Aronszajn}\cite{Aronszajn}
Let $K, G$ be two kernels on X, $\cH_K \subseteq \cH_G$ if and only if there exists a
nonnegative constant $\lambda \ge 0$ such that $K \ll \lambda  G$.
\end{lemma}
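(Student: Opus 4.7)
The plan is to prove the two directions separately. The ``if'' direction will rest on Aronszajn's classical sum theorem: if $K_1, K_2$ are kernels on $X$, then $K_1 + K_2$ is a kernel and $\cH_{K_1 + K_2} = \cH_{K_1} + \cH_{K_2}$ as sets of functions. Applying this with $K_1 := K$ and $K_2 := \lambda G - K$ gives $\cH_{\lambda G} = \cH_K + \cH_{\lambda G - K} \supseteq \cH_K$. Since $\cH_{\lambda G} = \cH_G$ as sets whenever $\lambda > 0$ (the two reproducing kernels differ only by a positive scalar), this already delivers the containment. The degenerate case $\lambda = 0$ forces $-K$ to be a kernel in addition to $K$, hence $K \equiv 0$ and $\cH_K = \{0\} \subseteq \cH_G$ trivially.

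For the ``only if'' direction, I would exploit the bounded identity embedding $T \colon \cH_K \hookrightarrow \cH_G$, whose boundedness was already recorded right before the lemma via the closed graph theorem, and set $\lambda := \beta(K, G)^2 = \|T\|^2$. The decisive step is to identify the action of the Hilbert-space adjoint $T^*$ on kernel sections. For each $x \in X$ and every $f \in \cH_K$, the reproducing property in both spaces yields
\[
(f, T^* G(x, \cdot))_{\cH_K} = (Tf, G(x, \cdot))_{\cH_G} = f(x) = (f, K(x, \cdot))_{\cH_K},
\]
so $T^* G(x, \cdot) = K(x, \cdot)$ in $\cH_K$. Consequently,
\[
\bigl(T T^* G(x, \cdot), G(y, \cdot)\bigr)_{\cH_G} = \bigl(K(x, \cdot), K(y, \cdot)\bigr)_{\cH_K} = K(y, x).
\]

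With this identification in hand, the operator $A := \lambda I_{\cH_G} - T T^*$ is positive semi-definite on $\cH_G$ because $\|T T^*\| = \|T\|^2 = \lambda$. Given any finite family $x_1, \ldots, x_n \in X$ and scalars $c_1, \ldots, c_n \in \bC$, I would form $u := \sum_{k} c_k\, G(x_k, \cdot) \in \cH_G$ and expand the nonnegative quantity $(A u, u)_{\cH_G}$. Using the previous display together with the hermiticity of $\lambda G - K$, this expansion matches, term by term, the quadratic form $\sum_{j,k} \bar{c}_j c_k (\lambda G - K)(x_j, x_k)$, establishing the positive semi-definiteness of the matrix $[(\lambda G - K)(x_j, x_k)]$, i.e.\ $K \ll \lambda G$.

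I expect the main obstacle to be purely bookkeeping rather than conceptual: one has to keep careful track of which argument of $K$ and $G$ is conjugated so that the manifestly real and nonnegative Hilbert-space quadratic form $(A u, u)_{\cH_G}$ agrees exactly with the PSD condition $\sum_{j,k} \bar{c}_j c_k (\lambda G - K)(x_j, x_k) \ge 0$, rather than its conjugate or transpose. Once the adjoint computation $T^* G(x, \cdot) = K(x, \cdot)$ is in place, both directions reduce to Aronszajn's sum theorem and the elementary operator inequality $T T^* \le \|T\|^2\, I_{\cH_G}$.
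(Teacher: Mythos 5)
Your proof is correct, but note that the paper itself does not prove this lemma at all --- it is quoted verbatim from Aronszajn's 1950 paper --- so there is no in-text argument to compare against; what you have written is a legitimate self-contained reconstruction of the classical proof. Both halves check out. The ``if'' direction via the sum theorem $\cH_{K_1+K_2}=\cH_{K_1}+\cH_{K_2}$ (which the paper itself invokes later as Lemma \ref{kernelsum}) is exactly Aronszajn's route, and your handling of the degenerate case $\lambda=0$ (forcing $K\equiv 0$ via $|K(x,y)|^2\le K(x,x)K(y,y)=0$) is the right way to close that loophole. The ``only if'' direction via $T^*G(x,\cdot)=K(x,\cdot)$ and the operator inequality $TT^*\le \|T\|^2 I_{\cH_G}$ is also sound; with $u=\sum_k c_k G(x_k,\cdot)$ one gets $(\lambda\|u\|^2_{\cH_G}-(TT^*u,u)_{\cH_G})=\sum_{j,k}c_j\bar{c}_k(\lambda G-K)(x_j,x_k)\ge 0$, and since the matrix $(\lambda G-K)[\bx]$ is automatically hermitian (as a difference of hermitian kernel matrices), this nonnegativity of the quadratic form is exactly positive semi-definiteness. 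The only slips are the conjugation conventions you yourself flag: under the paper's reproducing property $f(x)=(f,K(x,\cdot))_{\cH_K}$ one has $(K(x,\cdot),K(y,\cdot))_{\cH_K}=K(x,y)$ rather than $K(y,x)$, but the two differ only by conjugation for a hermitian kernel, so nothing breaks. As a side benefit, your argument shows $\lambda=\beta(K,G)^2$ works, which is precisely the sharp relation $\lambda(K,G)=\beta(K,G)^2$ the paper establishes separately in Proposition \ref{twoquantities}.
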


Provided that $\cH_K\subseteq\cH_G$, we shall denote by $\lambda(K,G)$ the infimum of the set of positive constants $\lambda$
such that $K\ll \lambda G$. If $\cH_K\nsubseteq\cH_G$ then we make the convention that $\lambda(K,G)=+\infty$.
We first make a simple observation about the two quantities $\beta(K,G)$ and $\lambda(K,G)$.

\begin{prop}\label{twoquantities}
Let $K,G$ be two kernels on $X$ with $\cH_K\subseteq\cH_G$ then $\beta(K,G)=\sqrt{\lambda(K,G)}$ and $K\ll \lambda(K,G) G$.
\end{prop}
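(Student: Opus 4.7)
\noindent\emph{Proof plan.} The plan is to analyze the adjoint of the inclusion operator $T:\cH_K\hookrightarrow\cH_G$, whose operator norm is $\beta:=\beta(K,G)$ by definition. I will establish the two inequalities $\lambda(K,G)\le\beta^2$ (together with attainment) and $\beta\le\sqrt{\lambda(K,G)}$ separately.

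For the first direction, I compute $T^*:\cH_G\to\cH_K$ on reproducing kernel sections. Using the reproducing property in both spaces,
\[
(f,T^*G(x,\cdot))_{\cH_K}=(Tf,G(x,\cdot))_{\cH_G}=f(x)=(f,K(x,\cdot))_{\cH_K}\qquad(f\in\cH_K),
\]
so $T^*G(x,\cdot)=K(x,\cdot)$. Plugging $g=\sum_j c_jG(x_j,\cdot)$ into $\|T^*g\|_{\cH_K}^2\le \beta^2\|g\|_{\cH_G}^2$ and expanding both sides with the reproducing property yields
\[
\sum_{j,k}c_j\bar c_k K(x_j,x_k)\le \beta^2\sum_{j,k}c_j\bar c_k G(x_j,x_k),
\]
which exhibits $\beta^2 G-K$ as Hermitian and positive semi-definite. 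Hence $K\ll \beta^2 G$, giving $\lambda(K,G)\le\beta^2$ and, more importantly, showing that this particular value of $\lambda$ is admissible, so after the second direction we will automatically get $K\ll \lambda(K,G)G$.

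For the second direction, I fix $\lambda$ with $K\ll\lambda G$ and build a bounded operator $A$ from $\cH_G$ to $\cH_K$ that is morally the inverse of the above picture. Define $A$ on the dense subspace $\span\{G(x,\cdot):x\in X\}$ by $A(\sum_j c_jG(x_j,\cdot)):=\sum_j c_jK(x_j,\cdot)$. The positive semi-definiteness of $\lambda G-K$ gives
\[
\Bigl\|\sum_j c_jK(x_j,\cdot)\Bigr\|_{\cH_K}^2\le \lambda\Bigl\|\sum_j c_jG(x_j,\cdot)\Bigr\|_{\cH_G}^2,
\]
which both shows $A$ is well-defined (taking $g=0$ forces the image to vanish) and extends boundedly with $\|A\|\le\sqrt{\lambda}$. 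A reproducing-property computation parallel to the one above identifies $A^*$ as the inclusion $\cH_K\hookrightarrow\cH_G$, so $\beta=\|A^*\|=\|A\|\le\sqrt{\lambda}$; taking the infimum over admissible $\lambda$ gives $\beta\le\sqrt{\lambda(K,G)}$.

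Combining the two inequalities yields $\beta(K,G)^2=\lambda(K,G)$, and since $K\ll\beta^2 G$ was shown in the first step, the infimum defining $\lambda(K,G)$ is attained. The main technical point to get right is the second step: verifying that $A$ is well-defined on the span (i.e.\ that different linear combinations representing the same element of $\cH_G$ are sent to the same element of $\cH_K$) and then correctly identifying $A^*$ with the inclusion so that the bound on $\|A\|$ transfers to a bound on $\beta$. Everything else reduces to the now-standard trick of reading positive semi-definiteness off a bound on the adjoint of the inclusion.
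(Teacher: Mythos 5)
Your proof is correct, but it takes a genuinely different route from the paper's. The paper's argument is a short deduction from two quoted facts: Aronszajn's characterization that $K\ll L$ holds if and only if $\cH_K\subseteq\cH_L$ with $\|f\|_{\cH_L}\le\|f\|_{\cH_K}$ for all $f\in\cH_K$, and the scaling identity $\|f\|_{\cH_G}=\sqrt{\lambda}\,\|f\|_{\cH_{\lambda G}}$; combining these, $K\ll\lambda G$ becomes equivalent to the norm bound $\|f\|_{\cH_G}\le\sqrt{\lambda}\,\|f\|_{\cH_K}$, from which both inequalities between $\beta(K,G)$ and $\sqrt{\lambda(K,G)}$ are read off. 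You never invoke that characterization: you work directly with the inclusion operator $T$, compute $T^*G(x,\cdot)=K(x,\cdot)$, and read the positive semi-definiteness of $\beta^2G-K$ off the bound $\|T^*g\|_{\cH_K}\le\beta\|g\|_{\cH_G}$ applied to finite combinations of kernel sections; for the reverse inequality you build the operator $A$ on $\span\{G(x,\cdot):x\in X\}$ and identify $A^*$ with the inclusion. In effect you are reproving the Aronszajn lemma that the paper cites, so your argument is longer but self-contained, and it has two side benefits: it produces along the way the identity $LG(x,\cdot)=K(x,\cdot)$ that the paper only obtains later as a consequence of Theorem 2.4, and it sidesteps the contrapositive step in the paper's proof (where, incidentally, ``if $\beta>\beta(K,G)$'' should read ``if $\beta<\beta(K,G)$''). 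The delicate points --- well-definedness of $A$ on the span, its bounded extension, and the identification of $A^*$ with the inclusion so that $\|A\|$ controls $\beta$ --- are all handled correctly.
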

\begin{proof}
It was proved in \cite{Aronszajn} that for two kernels $K,L$ on $X$, $K\ll L$ if and only if $\cH_K\subseteq \cH_L$ and $\|f\|_{\cH_L}\le \|f\|_{\cH_K}$ for all $f\in\cH_K$.
Note by Lemma \ref{featuremapconstruction} that $\cH_G$ and $\cH_{\lambda G}$ share common elements and for all $f\in\cH_G$ that
$$
\|f\|_{\cH_G}=\sqrt{\lambda} \|f\|_{\cH_{\lambda G}}.
$$
Combing these two facts, we get that for all $\lambda>0$ that $K\ll \lambda G$ if and only if $\cH_K\subseteq\cH_G$ and
$\|f\|_{\cH_G}\le \sqrt{\lambda} \|f\|_{\cH_K}$ for all $f\in\cH_K$. Thus, if $K\ll \lambda G$ then
$\beta(K,G)\le \sqrt{\lambda}$. It follows that $\beta(K,G)\le \sqrt{\lambda(K,G)}$. On the other hand, if $\beta>\beta(K,G)$
then there exists some $f\in\cH_K$ for which either $f\notin \cH_G$ or $\|f\|_{\cH_G}>\beta \|f\|_{\cH_K}$.
It implies that $K\ll \beta^2G$ does not hold. As a consequence, $\lambda(K,G)\le \beta^2$. We hence have that $\sqrt{\lambda(K,G)}\le \beta(K,G)$, leading to the equality
$$
\beta(K,G)=\sqrt{\lambda(K,G)},
$$ which in turn implies that $K\ll \lambda(K,G) G$.
\end{proof}

We next present another characterization of the inclusion relation in terms of feature maps of reproducing kernels.
\begin{theorem}\label{featuremapcharacterization}
Let $K,G$ be two kernels on $X$ with the feature map $\Phi_1 : X \to \cW_1$ and $\Phi_2:X\to\cW_2$, respectively. If $\overline{\span} \Phi_1(X) = \cW_1$ and $\overline{\span} \Phi_2(X) = \cW_2$ then $\cH_K \subseteq \cH_G$ if and only if there exists a bounded linear operator $T: \cW_2 \rightarrow \cW_1$
such that
\begin{equation}\label{operatorT}
T\Phi_2(x) = \Phi_1(X), \quad x \in X.
\end{equation}
Moreover, the inclusion is nontrivial if and only if the adjoint operator $T^*$ of $T$ is not surjective.
\end{theorem}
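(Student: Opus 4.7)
The plan is to reduce the inclusion $\cH_K\subseteq\cH_G$ to a condition on the auxiliary Hilbert spaces $\cW_1$ and $\cW_2$ by means of Lemma \ref{featuremapconstruction}. Under the density hypothesis that lemma yields isometric isomorphisms $\cH_K\cong\cW_1$ and $\cH_G\cong\cW_2$, under which an element $u\in\cW_1$ corresponds to the function $f_u(x):=(u,\Phi_1(x))_{\cW_1}$ on $X$, and $v\in\cW_2$ to $g_v(x):=(v,\Phi_2(x))_{\cW_2}$. The pivotal observation is that a bounded operator $T:\cW_2\to\cW_1$ satisfies $T\Phi_2(x)=\Phi_1(x)$ for every $x\in X$ if and only if $f_u=g_{T^*u}$ for every $u\in\cW_1$, since
$$
(u,\Phi_1(x))_{\cW_1}=(u,T\Phi_2(x))_{\cW_1}=(T^*u,\Phi_2(x))_{\cW_2}
$$
and $\Phi_2(X)$ spans a dense subspace of $\cW_2$.

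The ``if'' direction is then immediate: existence of such a $T$ forces every $f_u\in\cH_K$ to coincide with $g_{T^*u}\in\cH_G$, so $\cH_K\subseteq\cH_G$. For the ``only if'' direction I would first invoke the remark of Aronszajn recalled before Lemma \ref{Aronszajn} (and used already in Proposition \ref{twoquantities}): the embedding $\cH_K\hookrightarrow\cH_G$ is automatically continuous with norm $\beta(K,G)$. Define $S:\cW_1\to\cW_2$ by sending $u$ to the unique $v$ with $f_u=g_v$, uniqueness being guaranteed by the density of $\Phi_2(X)$ in $\cW_2$. Linearity of $S$ is routine, and the isometries of Lemma \ref{featuremapconstruction} give
$$
\|Su\|_{\cW_2}=\|g_{Su}\|_{\cH_G}=\|f_u\|_{\cH_G}\le\beta(K,G)\|f_u\|_{\cH_K}=\beta(K,G)\|u\|_{\cW_1},
$$
so $S$ is bounded. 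Setting $T:=S^*:\cW_2\to\cW_1$ and running the displayed identity in reverse yields $T\Phi_2(x)=\Phi_1(x)$ for all $x\in X$.

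For the moreover part, the construction of $S$ shows that the subset $\cH_K$ of $\cH_G$, transported to $\cW_2$ via $g_v\leftrightarrow v$, coincides with $\{Su:u\in\cW_1\}=\ran T^*$. Hence $\cH_K=\cH_G$ exactly when $\ran T^*=\cW_2$, and the inclusion is proper (``nontrivial'') exactly when $T^*$ is not surjective. I expect the main technical step to be the necessary direction: one must first secure continuity of the embedding $\cH_K\hookrightarrow\cH_G$ in order to produce the bounded operator $S$ and pass to its adjoint $T$. Everything else is formal once the two feature-map identifications of Lemma \ref{featuremapconstruction} are in place.
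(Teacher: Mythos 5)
Your proposal is correct, and it is essentially the argument the paper intends: the paper's own ``proof'' is only a pointer to Theorems 6 and 7 of \cite{XZ}, and the route you take --- identifying $\cH_K$ and $\cH_G$ with $\cW_1$ and $\cW_2$ via Lemma \ref{featuremapconstruction} under the density hypothesis, using the automatic boundedness of the embedding to define $S:\cW_1\to\cW_2$, and setting $T:=S^*$ so that $\ran T^*=S(\cW_1)$ is the copy of $\cH_K$ inside $\cW_2$ --- is exactly the feature-map/adjoint argument being referenced. The only cosmetic remark is that the density of $\span\Phi_2(X)$ is what makes $T$ (hence $T^*$) unique, which is needed for the ``moreover'' clause to be well posed; you use this density correctly when defining $S$.
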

\begin{proof}
The result can be proved by similar arguments as those in Theorems 6 and 7 of \cite{XZ}.
\end{proof}

By the above theorem, the particular choices
$$
\cW_1:=\cH_K,\ \Phi_1(x):=K(x,\cdot),\quad \cW_2:=\cH_G,\ \Phi_2(x)=G(x,\cdot),\ x\in X
$$
yields that $\cH_K\subseteq\cH_G$ if and only if there exists a bounded operator $L: \cH_G \rightarrow \cH_K$ such that $LG(x, \cdot) = K(x, \cdot)$ for all $x \in X$. We remark that this result in the special case when $X$ is a countable dense subset of $\bR^d$ was proved in \cite{DR}.

\section{Translation Invariant Kernels and Radial Basis Functions}
\setcounter{equation}{0}

Translation invariant kernels are the most widely-used class of reproducing kernels on the Euclidean space. A kernel $K$ on $\bR^d$ is said to be {\it translation invariant} if
$$
K(x - a, y-a) = K(x, y) \mbox{ for all }x,y,a\in\bR^d.
$$
There is a celebrated characterization of continuous translation invariant kernels on $\bR^d$ due to Bochner \cite{Bochner}. The result is usually referred to as the Bochner theorem. Denote by $\cB(\bR^d)$ the set of all the finite positive Borel measures on $\bR^d$. The characterization states that continuous translation invariant kernels on $\bR^d$ are exactly the Fourier transform of finite positive Borel measures in $\cB(\bR^d)$. Thus we shall consider the inclusion relation $\cH_K\subseteq\cH_G$ for two translation invariant kernels $K,G$ of the form
\begin{equation}\label{translationK}
K(x,y) = \int_{\bR^d} e^{i(x-y, \xi)} \,d\mu(\xi), \quad x, y \in \bR^d,
\end{equation}
and
\begin{equation}\label{translationG}
G(x,y) = \int_{\bR^d} e^{i(x-y, \xi)} \,d\nu(\xi), \quad x, y \in \bR^d ,
\end{equation}
where $\mu,\nu\in\cB(\bR^d)$.

Let $\mu,\nu$ be two finite Borel measures on a topological space $Y$. Recall that $\mu$ is said to be {\it absolutely continuous}
 with respect to $\nu$, denoted as $\mu\ll\nu$, if $\mu$ vanishes on Borel subsets of $Y$ with zero $\nu$ measure.
 When $\mu\ll\nu$, $d\mu/d\nu$ is a Borel measurable function on $Y$ such that
$$
\mu(A)=\int_A \frac{d\mu}{d\nu}(x)d\nu(x)\ \ \mbox{ for all Borel subsets }A\subseteq Y.
$$
We denote by $L^\infty_\nu(Y)$ the space of Borel measurable functions on $Y$ with the norm
$$
\|f\|_{L^\infty_\nu(Y)}:=\inf\{M>0:\nu(\{t\in Y:|f(t)|>M\})=0\}<+\infty.
$$
For later use, we also denote by $L^2_\nu(Y)$ the Hilbert space of Borel measurable functions on $Y$ such that
$$
\|f\|_{L^2_\nu(Y)}:=\left(\int_Y |f(t)|^2d\nu(t)\right)^{1/2}<+\infty.
$$

\begin{prop}\label{characterizationtranslation}
Let $K,G$ be two continuous translation invariant kernels on $\bR^d$ given by (\ref{translationK}) and (\ref{translationG}). Then
 $\cH_K\subseteq \cH_G$ if and only if $\mu\ll\nu$ and $d\mu/d\nu\in L^\infty_\nu(\bR^d)$. In the case that $\cH_K\subseteq\cH_G$,
\begin{equation}\label{lambdatranslation}
\lambda(K,G)=\left\|\frac{d\mu}{d\nu}\right\|_{L^\infty_\nu(\bR^d)}.
\end{equation}
\end{prop}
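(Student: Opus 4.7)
The plan is to combine Lemma \ref{Aronszajn} with Bochner's theorem. By Lemma \ref{Aronszajn}, the inclusion $\cH_K\subseteq\cH_G$ is equivalent to the existence of $\lambda\ge 0$ such that $\lambda G-K$ is itself a reproducing kernel on $\bR^d$. Since $K$ and $G$ are both continuous and translation invariant, so is $\lambda G-K$, and Bochner's theorem then identifies positive semi-definiteness of this function with positivity of an underlying finite Borel measure. This is the bridge from the abstract inclusion statement to a concrete measure-theoretic inequality between $\mu$ and $\nu$.

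Concretely, I would first compute
\[
\lambda G(x,y)-K(x,y)=\int_{\bR^d} e^{i(x-y,\xi)}\,d(\lambda\nu-\mu)(\xi),
\]
so that $\lambda G-K$ is the Fourier transform of the finite signed Borel measure $\lambda\nu-\mu$. If $\lambda G-K$ is a kernel, then by Bochner it also equals the Fourier transform of some finite positive Borel measure $\sigma$. Uniqueness of the Fourier transform on finite signed measures forces $\sigma=\lambda\nu-\mu$, hence $\lambda\nu-\mu\ge 0$, i.e.\ $\mu(A)\le\lambda\nu(A)$ for every Borel set $A\subseteq\bR^d$. Conversely, if $\mu\le\lambda\nu$ as measures, then $\lambda\nu-\mu$ is a finite positive Borel measure, so its Fourier transform $\lambda G-K$ is automatically a continuous translation invariant kernel.

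The next step is to translate the measure inequality $\mu\le\lambda\nu$ into Radon--Nikodym language. On the one hand, $\mu\le\lambda\nu$ immediately yields $\mu\ll\nu$, and the Radon--Nikodym theorem produces a non-negative derivative $d\mu/d\nu$ satisfying $\int_A(d\mu/d\nu)\,d\nu=\mu(A)\le\lambda\nu(A)$ for all Borel $A$, which is equivalent to $d\mu/d\nu\le\lambda$ $\nu$-a.e. On the other hand, if $\mu\ll\nu$ with $\|d\mu/d\nu\|_{L^\infty_\nu(\bR^d)}\le\lambda$, integrating the inequality $d\mu/d\nu\le\lambda$ recovers $\mu\le\lambda\nu$. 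Combining both directions gives the first statement of the proposition, and the quantitative identity $\lambda(K,G)=\|d\mu/d\nu\|_{L^\infty_\nu(\bR^d)}$ follows at once, since $\lambda(K,G)$ is by definition the infimum of the constants $\lambda$ for which $K\ll\lambda G$, which by the argument above is precisely the infimum of essential upper bounds of $d\mu/d\nu$ under $\nu$.

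The only delicate point, and the place where I would spend the most care, is the appeal to uniqueness of the Fourier transform for finite signed Borel measures on $\bR^d$ in order to conclude $\sigma=\lambda\nu-\mu$ from the equality of their Fourier transforms; once this is justified (it is standard, via density of the Schwartz class or of trigonometric polynomials on tightness-based arguments), the rest of the proof reduces to a clean application of Bochner's theorem and the Radon--Nikodym theorem.
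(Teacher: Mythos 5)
Your proposal is correct and follows essentially the same route as the paper: Lemma \ref{Aronszajn} reduces the inclusion to $\lambda G-K$ being a kernel, Bochner's theorem together with uniqueness of the Fourier transform of finite signed measures converts this to $\lambda\nu-\mu\ge 0$, and the Radon--Nikodym theorem turns that into $\mu\ll\nu$ with $d\mu/d\nu\le\lambda$ $\nu$-a.e. The only difference is that you spell out the uniqueness step that the paper leaves implicit, which is a welcome addition rather than a deviation.
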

\begin{proof}
By Lemma \ref{Aronszajn}, $\cH_K\subseteq\cH_G$ if and only if there exists some $\lambda\ge0$ such that $\lambda G-K$ is a kernel on $\bR^d$. Note that for all $\lambda\ge0$, $\lambda G-K$ is still translation invariant. Therefore, by the Bochner theorem, $K\le \lambda G$ if and only if $\lambda\nu-\mu\in\cB(\bR^d)$, which happens if and only if $\mu\ll\nu$ and $d\mu/d\nu$ is bounded by $\lambda$ almost everywhere on $\bR^d$ with respect to $\nu$. We hence get that $\cH_K\subseteq \cH_G$ if and only if $\mu\ll\nu$ and $d\mu/d\nu\in L^\infty_\nu(\bR^d)$. When $\mu\ll\nu$ and $d\mu/d\nu\in L^\infty_\nu(\bR^d)$, it is clear that (\ref{lambdatranslation}) holds.
\end{proof}

We pay special attention to the situation when the Borel measures in (\ref{translationK}) and (\ref{translationG}) are absolutely continuous with respect to the Lebesgue measure. In this case, by the Radon-Nikodym theorem, $K,G$ are the Fourier transform of nonnegative Lebesgue integrable functions on $\bR^d$.

\begin{coro}\label{chatranslationcontinuous}
Let $u,v$ be nonnegative functions in $L^1(\bR^d)$ and let $K,G$ be defined by
\begin{equation}\label{translationKG2}
K(x,y)=\int_{\bR^d}e^{i(x-y,\xi)}u(\xi)d\xi,\ \ G(x,y)=\int_{\bR^d}e^{i(x-y,\xi)}v(\xi)d\xi,\ \ x,y\in\bR^d.
\end{equation}
Then $\cH_K\subseteq\cH_G$ if and only if the set $\{t\in\bR^d:u(t)>0,\ v(t)=0\}$ has Lebesgue measure zero and
$u/v$ is essentially bounded on $\{t\in\bR^d:v(t)>0\}$, in which case $\lambda(K,G)$ equals the essential upper bound of $u/v$
on $\{t\in\bR^d:v(t)>0\}$. In particular, if $v$ is positive almost everywhere on $\bR^d$ then
$\cH_K\subseteq\cH_G$ if and only if $u/v\in L^\infty(\bR^d)$, in which case $\lambda(K,G)=\|u/v\|_{L^\infty(\bR^d)}$.
\end{coro}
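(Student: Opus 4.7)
The plan is to deduce the corollary directly from Proposition~\ref{characterizationtranslation} by translating the abstract absolute continuity and Radon--Nikodym conditions on $\mu,\nu$ into pointwise a.e.\ conditions on the densities $u,v$. With $d\mu=u\,d\xi$ and $d\nu=v\,d\xi$, the task splits into (i)~characterizing $\mu\ll\nu$, (ii)~identifying $d\mu/d\nu$ explicitly, and (iii)~rewriting the $L^\infty_\nu$ norm in terms of $u/v$.

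First I would observe that for any Borel set $A\subseteq\bR^d$, $\nu(A)=\int_A v\,d\xi=0$ if and only if $v=0$ almost everywhere on $A$ with respect to Lebesgue measure. Hence $\mu\ll\nu$ is equivalent to: whenever $v=0$ a.e.\ on $A$, also $u=0$ a.e.\ on $A$. This in turn is equivalent to the set $N:=\{t\in\bR^d:u(t)>0,\,v(t)=0\}$ having Lebesgue measure zero, since otherwise $\nu(N)=0$ but $\mu(N)>0$. Next, on the set $P:=\{v>0\}$ the function $u/v$ is well defined, and a routine integration check shows
\[
\mu(A\cap P)=\int_{A\cap P}\frac{u}{v}\,v\,d\xi=\int_{A\cap P}\frac{u}{v}\,d\nu
\]
for every Borel $A$, while on $P^c$ we have $\nu(P^c)=0$ and (by the absolute continuity just established) $\mu(P^c)=0$. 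Therefore $d\mu/d\nu=u/v$ on $P$ and may be taken to be $0$ on $P^c$.

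Now I would invoke Proposition~\ref{characterizationtranslation}: $\cH_K\subseteq\cH_G$ iff $\mu\ll\nu$ and $d\mu/d\nu\in L^\infty_\nu(\bR^d)$. By the previous paragraph the first condition is exactly the measure-zero condition on $N$, and the second reads
\[
\left\|\frac{d\mu}{d\nu}\right\|_{L^\infty_\nu(\bR^d)}
=\inf\bigl\{M>0:\nu(\{u/v>M\}\cap P)=0\bigr\}<+\infty.
\]
Because $\nu$ and Lebesgue measure are mutually absolutely continuous on $P$ (where $v>0$), a $\nu$-null subset of $P$ is the same as a Lebesgue-null subset of $P$. Consequently, $d\mu/d\nu\in L^\infty_\nu(\bR^d)$ is equivalent to $u/v$ being essentially bounded on $P=\{v>0\}$ in the usual Lebesgue sense, and the formula for $\lambda(K,G)$ follows from \eqref{lambdatranslation}.

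Finally, the special case follows immediately: if $v>0$ almost everywhere, then $N$ is automatically a Lebesgue-null set and $P$ has full measure, so the essential bound of $u/v$ on $P$ coincides with $\|u/v\|_{L^\infty(\bR^d)}$. The only real subtlety in the argument — and the point I would be most careful about — is the replacement of the $\nu$-essential supremum by the Lebesgue-essential supremum on $\{v>0\}$; once one verifies that $\nu$ and Lebesgue measure have the same null sets on this set, the whole corollary is a direct translation of Proposition~\ref{characterizationtranslation}.
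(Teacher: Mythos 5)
Your proposal is correct and follows exactly the route the paper intends: the corollary is stated without proof as a direct specialization of Proposition~\ref{characterizationtranslation} to $d\mu=u\,d\xi$, $d\nu=v\,d\xi$, and your verification that $\mu\ll\nu$ amounts to $|\{u>0,\,v=0\}|=0$, that $d\mu/d\nu=u/v$ on $\{v>0\}$, and that the $\nu$-essential supremum agrees with the Lebesgue one there (by mutual absolute continuity on $\{v>0\}$) is precisely the omitted routine argument.
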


An important class of translation invariant kernels on $\bR^d$ are given by radial basis functions. Those are reproducing kernels of the form
\begin{equation}\label{RBFg}
K_d(x,y)=g(\|x-y\|), \ \ x,y\in\bR^d,
\end{equation}
where $g$ is a single-variate function on $\bR_+:=[0,+\infty)$ and $\|\cdot\|$ is the standard Euclidean norm on $\bR^d$. The following well-known
characterizations of kernels of the form (\ref{RBFg}) are due to Schoenberg \cite{SIJ}.

For each $d\in\bN$, denote by $d\omega_d$ and $\omega_d$ the area element and total area of the unit sphere of $\bR^d$, respectively.
Also set
$$
\Omega_d(|x|):=\frac{1}{\omega_d}\int_{\|\xi\|=1}e^{i(x,\xi)}d\omega_d(\xi),\ \ x\in\bR^d.
$$

\begin{lemma}\label{Schoenberg}
Let $g$ be a function on $\bR_+$. Then (\ref{RBFg}) defines a reproducing kernel on $\bR^d$ if and only if there is a finite positive Borel
measure $\mu$ on $\bR_+$ such that
\begin{equation}\label{RBFdkernelK}
K_{d}(x,y)=\int_{0}^\infty\Omega_d(t\|x-y\|)d\mu(t),\ \ x,y\in\bR^d.
\end{equation}
Furthermore, equation (\ref{RBFg}) defines a reproducing kernel $K_d$ on $\bR^d$ for all $d\in\bN$ if and only if
\begin{equation}\label{RBFallkernelK}
K_{d}(x,y)=\int_{0}^\infty e^{-t\|x-y\|^2}d\mu(t),\ \ x,y\in\bR^d
\end{equation}
for some finite positive Borel measure $\mu$ on $\bR_+$.
\end{lemma}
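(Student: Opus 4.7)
The plan is to treat the two characterizations separately. For the dimension-$d$ statement I would reduce matters to Bochner's theorem, while for the ``all $d$'' statement I would pass to the limit $d \to \infty$ using an asymptotic relation between the spherical averages $\Omega_d$ and the Gaussian.

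For the first equivalence, sufficiency is the easy half: for each fixed $t \ge 0$ the function $(x,y) \mapsto \Omega_d(t\|x-y\|)$ is itself a reproducing kernel on $\bR^d$, being the Fourier transform of the normalized uniform measure on the sphere of radius $t$; integrating this family against the finite positive Borel measure $\mu$ preserves positive semi-definiteness, so (\ref{RBFdkernelK}) produces a reproducing kernel. For necessity, after reducing to continuous $g$, the kernel $K_d$ is continuous and translation invariant, so by the Bochner theorem it is the Fourier transform of a finite positive Borel measure $\nu$ on $\bR^d$. Rotation invariance of the map $x \mapsto K_d(x,0) = g(\|x\|)$ together with uniqueness of $\nu$ forces $\nu$ to be rotation invariant; disintegrating $\nu$ along spherical shells produces a measure $\mu$ on $\bR_+$, and the defining identity for $\Omega_d$ rewrites the resulting Fourier integral as (\ref{RBFdkernelK}).

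For the second equivalence, sufficiency is immediate because $e^{-t\|x-y\|^2}$ is a reproducing kernel on every $\bR^d$, so integration against a positive Borel measure on $\bR_+$ preserves this property in every dimension. The content lies in necessity, where the main obstacle is coordinating the dimension-dependent representations from part one into a single, dimension-free Laplace representation. My plan is to apply (\ref{RBFdkernelK}) in each dimension $d$ to obtain a representing measure $\mu_d$, and then exploit the Schoenberg asymptotic
\[
\lim_{d \to \infty} \Omega_d\bigl(r\sqrt{2d}\bigr) = e^{-r^2},
\]
which follows by writing $\Omega_d$ as a normalized Bessel function and applying a Laplace-type expansion (equivalently, by observing that a single coordinate of a uniform point on the unit sphere of $\bR^d$ is approximately $N(0,1/d)$ for large $d$). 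Rescaling through $s = t^2/(2d)$ converts $\mu_d$ into a measure $\tilde\mu_d$ on $\bR_+$ whose integral against $\Omega_d(r\sqrt{2ds})$ still equals $g(r)$.

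The hard step is to extract from $\{\tilde\mu_d\}$ a weakly convergent subsequence and justify interchanging the limit with the integral. Evaluation at $r = 0$ gives the uniform total-mass bound $\tilde\mu_d(\bR_+) = g(0)$, which is the starting point for tightness; ruling out escape of mass to infinity requires additional care and is where I expect to spend the most effort, using the pointwise behaviour of $\Omega_d(r\sqrt{2ds})$ in $s$ against the boundedness of $g$ away from $r=0$. Once tightness is in hand, the pointwise limit above combined with a standard weak-convergence and dominated-convergence argument yields the representation (\ref{RBFallkernelK}), and uniqueness of the representing measure in the Laplace transform then pins down $\mu$.
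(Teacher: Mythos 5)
The paper does not prove this lemma at all --- it is quoted as a classical result of Schoenberg \cite{SIJ} --- so there is no internal proof to compare against. Your outline reproduces the classical argument faithfully: Bochner plus rotation-invariance and spherical disintegration for fixed $d$, and the asymptotic $\Omega_d(r\sqrt{2d})\to e^{-r^2}$ with a compactness argument on the representing measures for the all-$d$ statement. Both sufficiency halves are fine as written.

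Two points in the necessity halves deserve attention. First, ``after reducing to continuous $g$'' is not a reduction: continuity of $g$ (equivalently, at the origin) is a genuine additional hypothesis, silently assumed in the lemma as stated. Without it the equivalence fails --- take $g(0)=1$ and $g(r)=0$ for $r>0$, which gives the positive semi-definite kernel $\delta_{x,y}$ in every dimension but admits no representation of either form, since both integrals define functions continuous in $r$. Second, the step you flag as hard --- ruling out escape of mass --- is exactly where continuity at the origin enters, and it closes as follows: view the rescaled measures $\tilde\mu_d$ as measures on the compactification $[0,+\infty]$, where the uniform mass bound $\tilde\mu_d([0,+\infty])=g(0)$ and Helly's selection theorem give a weak-$*$ convergent subsequence with limit $\mu$ of total mass $g(0)$; the uniform bound $|\Omega_d|\le 1$ and the locally uniform convergence $\Omega_d(r\sqrt{2ds})\to e^{-sr^2}$ (with the convention $e^{-\infty\cdot r^2}=0$ for $r>0$) yield $g(r)=\int_{[0,+\infty]}e^{-sr^2}\,d\mu(s)$ for $r>0$; letting $r\downarrow 0$ and using monotone convergence gives $g(0)=\mu([0,+\infty))$, hence $\mu(\{+\infty\})=0$ and the representation lives on $\bR_+$ as required. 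With these two repairs your proof is complete and is, in substance, Schoenberg's original one.
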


Notice that both $\span\{\Omega_d(tr):r>0\}$ and $\span\{e^{-tr}:r>0\}$ are dense in $C_0(\bR_+)$, the space of continuous functions
on $\bR_+$ vanishing at infinity equipped with the maximum norm. By this fact and Lemma \ref{Schoenberg}, one may use arguments similar to those in
the proof of Proposition \ref{characterizationtranslation} to get the following characterizations of the inclusion relation of RKHS of
kernels of the form (\ref{RBFg}).

\begin{prop}\label{characterizeRBF}
Let $\mu,\nu$ be two finite positive Borel measures on $\bR_+$, let $K_d$ be given by (\ref{RBFdkernelK}) and set
\begin{equation}\label{RBFdkernelG}
G_{d}(x,y):=\int_{0}^\infty\Omega_d(t\|x-y\|)d\nu(t),\ \ x,y\in\bR^d.
\end{equation}
Then $\cH_{K_d}\subseteq\cH_{G_d}$ if and only if $\mu\ll\nu$ and $d\mu/d\nu\in L^\infty_\nu(\bR_+)$, in which case
$\lambda(K_d,G_d)=\|d\mu/d\nu\|_{L^\infty_\nu(\bR_+)}$.

If $K_d$ is given by (\ref{RBFallkernelK}) and $G_d$ is defined by
\begin{equation}\label{RBFallkernelG}
G_d(x,y)=\int_{0}^\infty e^{-t\|x-y\|^2}d\nu(t),\ \ x,y\in\bR^d
\end{equation}
then $\cH_{K_d}\subseteq\cH_{G_d}$ for all $d\in\bN$ and $\{\lambda(K_d,G_d):d\in\bN\}$ is bounded if and only if $\mu\ll\nu$ and
$d\mu/d\nu\in L^\infty_\nu(\bR_+)$, in which case $\sup\{\lambda(K_d,G_d):d\in\bN\}=\|d\mu/d\nu\|_{L^\infty_\nu(\bR_+)}$.
\end{prop}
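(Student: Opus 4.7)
The plan is to mirror the proof of Proposition \ref{characterizationtranslation}, replacing Bochner's theorem with Schoenberg's characterization (Lemma \ref{Schoenberg}) in its first form for the statement about a fixed $d$ and in its second form for the statement uniform in $d$.

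For the first statement I would invoke Lemma \ref{Aronszajn}: $\cH_{K_d}\subseteq\cH_{G_d}$ is equivalent to the existence of $\lambda\ge 0$ for which $\lambda G_d-K_d$ is a kernel on $\bR^d$. Since
$$
\lambda G_d(x,y)-K_d(x,y)=\int_0^\infty \Omega_d(t\|x-y\|)\,d(\lambda\nu-\mu)(t),
$$
this difference is again a radial function, so the first half of Lemma \ref{Schoenberg} applies. The easy direction is immediate: if $\lambda\nu-\mu$ is itself a finite positive Borel measure, the displayed formula is already a Schoenberg representation. For the converse I would use uniqueness. Lemma \ref{Schoenberg} supplies a finite positive Borel measure $\tau$ representing $\lambda G_d-K_d$, and the stated density of $\span\{\Omega_d(tr):r>0\}$ in $C_0(\bR_+)$, combined with the Riesz representation theorem applied to finite signed Borel measures on $\bR_+$, forces $\tau=\lambda\nu-\mu$; hence $\lambda\nu-\mu\ge 0$. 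This positivity is equivalent to $\mu\ll\nu$ with $d\mu/d\nu\le\lambda$ $\nu$-almost everywhere, and taking the infimum over admissible $\lambda$ yields $\lambda(K_d,G_d)=\|d\mu/d\nu\|_{L^\infty_\nu(\bR_+)}$.

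For the second statement I would rerun this argument using the second half of Lemma \ref{Schoenberg}. Assuming $\cH_{K_d}\subseteq\cH_{G_d}$ for every $d\in\bN$ and $\lambda:=\sup_d\lambda(K_d,G_d)<\infty$, Proposition \ref{twoquantities} gives that $\lambda(K_d,G_d)G_d-K_d$ is a kernel on $\bR^d$, hence so is $\lambda G_d-K_d$ for every $d$. The second half of Lemma \ref{Schoenberg} then produces a single finite positive Borel measure $\tau$ on $\bR_+$ with $\lambda G_d-K_d=\int_0^\infty e^{-t\|x-y\|^2}\,d\tau(t)$ for all $d$; the density of $\span\{e^{-tr}:r>0\}$ in $C_0(\bR_+)$ again pins down $\tau=\lambda\nu-\mu$, so $\|d\mu/d\nu\|_{L^\infty_\nu(\bR_+)}\le\lambda$. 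Conversely, with $M:=\|d\mu/d\nu\|_{L^\infty_\nu(\bR_+)}<\infty$, the measure $M\nu-\mu$ is positive, and Schoenberg yields at once that $MG_d-K_d$ is a kernel on $\bR^d$ simultaneously for all $d$, whence $\sup_d\lambda(K_d,G_d)\le M$.

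The main obstacle is the uniqueness step in each half: one must leverage the stated density results to conclude that the Schoenberg representing measure is unique on signed measures, so that positivity of the radial function $\lambda G_d-K_d$ transfers to positivity of the signed measure $\lambda\nu-\mu$ itself. All remaining manipulations are routine and run in parallel with the proof of Proposition \ref{characterizationtranslation}.
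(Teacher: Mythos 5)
Your argument is correct and is precisely the route the paper intends: it invokes Lemma \ref{Aronszajn} to reduce the inclusion to positivity of $\lambda G_d-K_d$, applies the appropriate half of Lemma \ref{Schoenberg}, and uses the stated density of $\span\{\Omega_d(tr):r>0\}$ (resp.\ $\span\{e^{-tr}:r>0\}$) in $C_0(\bR_+)$ together with the Riesz representation theorem to identify the representing measure with $\lambda\nu-\mu$, exactly mirroring the proof of Proposition \ref{characterizationtranslation} as the paper prescribes. You have also correctly isolated the one nontrivial step the paper leaves implicit, namely the uniqueness of the Schoenberg representing measure among finite signed measures.
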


One may specify statements in the above proposition to the case when $\mu,\nu$ are absolutely continuous with respect to the Lebesgue
measure on $\bR_+$ to get results similar to those in Corollary \ref{chatranslationcontinuous}, which we shall not state here.

We next turn to the main purpose of this section, which is to explore the inclusion relations among the RKHS of six commonly used
translation invariant kernels in machine learning and other areas of applied mathematics. To apply the characterizations established
above, we present those kernels in the form they appear in the characterization of Bochner or Schoenberg:
\begin{itemize}

\item[--] the Gaussian kernel
\begin{equation}\label{gaussian}
G_\gamma(x,y)=\exp\biggl(-\frac{\|x-y\|^2}{\gamma}\biggr)=
\int_{\bR^d}e^{i(x-y,\xi)}g_\gamma(\xi)d\xi,\ \ x,y\in\bR^d,\ \gamma>0.
\end{equation}
where
$$
g_\gamma(\xi):=\biggl(\frac{\sqrt{\gamma}}{2\sqrt{\pi}}\biggr)^d\exp(-\frac{\gamma\|\xi\|^2}4),\ \ \xi\in\bR^d.
$$

\item[--] the $\ell^1$-norm exponential kernel
\begin{equation}\label{l1exponential}
E_{\sigma_1}(x,y)=\exp\biggl(-\frac{\|x-y\|_1}{\sigma_1}\biggr)=\int_{\bR^d}e^{i(x-y,\xi)}\varphi_{\sigma_1}(\xi)d\xi,\ \ x,y\in\bR^d,\ \sigma_1>0,
\end{equation}
where $\|x\|_1:=\sum_{j=1}^d|x_j|$, $x=(x_j:j\in\bN_d)\in\bR^d$ and
$$
\varphi_{\sigma_1}(\xi):=\frac{\sigma_1^d}{\pi^d}\prod_{j=1}^d\frac1{1+\sigma_1^2\xi_j^2},\ \ \xi\in\bR^d.
$$

\item[--] the $\ell^2$-norm exponential kernel
\begin{equation}\label{l2exponential}
\cE_{\sigma_2}(x,y)=\exp(-\|x-y\|)=\int_{\bR^d}e^{i(x-y,\xi)}\psi_{\sigma_2}(\xi)d\xi,
\ \ x,y\in\bR^d,\ \sigma_2>0,
\end{equation}
where
\begin{equation}\label{l2exponentialfourier}
\psi_{\sigma_2}(\xi):=\frac{\Gamma(\frac{d+1}2)}{\pi^{\frac{d+1}2}}\frac{\sigma_2^d}{(1+\sigma_2^2\|\xi\|^2)^{\frac{d+1}2}},\ \ \xi\in\bR^d.
\end{equation}
Here, $\Gamma$ denotes the Gamma function and the Fourier transform is identified by the Poisson kernel (see, for example, \cite{Stein}, page 61).

\item[--] the inverse multiquadrics
\begin{equation}\label{multiquadric}
M_\beta(x,y):=\frac1{(1+\|x-y\|^2)^\beta}=\int_{\bR^d}e^{i(x-y,\xi)}m_\beta(\xi)d\xi,\ \ x,y\in\bR^d,\ \ \beta>0,
\end{equation}
where
\begin{equation}\label{multiquadricfourier}
m_\beta(\xi):=\frac1{(2\sqrt{\pi})^d}\frac1{\Gamma(\beta)}\int_0^\infty t^{\beta-\frac d2-1}\exp\left(-\frac{\|\xi\|^2}{4t}-t\right)dt,\ \ \xi\in\bR^d.
\end{equation}
This formulation can be obtained by combining Theorem 7.15 in \cite{Wendland} and the Fourier transform of the Gaussian function.

\item[--] the B-spline kernel
\begin{equation}\label{bspline}
B_p(x,y):=\prod_{j=1}^d B_p(x_j-y_j)=\int_{\bR^d}e^{i(x-y,\xi)}b_p(\xi)d\xi,\ \ x,y\in\bR^d,\ p\in2\bN,
\end{equation}
where $B_p$ denotes the $p$-th order cardinal B-spline, and with $\sinc_{\frac12}(t):=\frac{\sin(\frac{t}2)}{\frac{t}2}$, $t\in\bR$,
$$
b_p(\xi):=\frac1{(2\pi)^d}\prod_{j=1}^d(\sinc_{\frac12}(\xi_j))^p,\ \ \xi\in\bR^d.
$$

\item[--] the ANOVA kernel
\begin{equation}\label{ANOVA}
A_\tau(x,y):=\sum_{j=1}^d \exp\biggl(-\frac{|x_j-y_j|^2}{\tau}\biggr)=\int_{\bR^d}e^{i(x-y,\xi)}a_\tau(\xi)
d\xi,\ \ x,y\in\bR^d,\ \tau>0,
\end{equation}
where
$$
a_\tau(\xi):=\frac{\sqrt{\tau}}{2\sqrt{\pi}}\left(\sum_{j=1}^d\exp(-\frac{\tau \xi_j^2}4)\right),\ \ \xi\in\bR^d.
$$
\end{itemize}

Among those kernels, the Gaussian kernel, the $\ell^2$-norm exponential kernel, and the inverse multiquadrics are radial basis functions.
We also give their representation by the Laplace transform below:

\begin{itemize}

\item[--] the Gaussian kernel
\begin{equation}\label{gaussianRBF}
G_\gamma(x,y)=\exp\biggl(-\frac{\|x-y\|^2}{\gamma}\biggr)=\int_0^\infty e^{-\|x-y\|^2t}d\delta_{\gamma^{-1}}(t),
\end{equation}
where $\delta_t$ denotes the unit measure supported at the singleton $\{t\}$.

\item[--] the $\ell^2$-norm exponential kernel
\begin{equation}\label{l2exponentialRBF}
\cE_{\sigma_2}(x,y)=\exp\biggl(-\frac{\|x-y\|}{\sigma_2}\biggr)=\frac1{2\sigma_2\sqrt{\pi}}\int_0^\infty e^{-\|x-y\|^2t}\exp(-\frac1{4\sigma_2^2t})\frac1{t^{3/2}}dt,
\ \ x,y\in\bR^d.
\end{equation}
This equation is derived from the identity (see \cite{Stein}, page 61) that
$$
e^{-r} = \frac{1}{\sqrt{\pi}}\int_0^\infty e^{-r^2/4s} \frac{e^{-s}}{\sqrt{s}}\; ds, \quad r>0.
$$

\item[--] the inverse multiquadrics (see \cite{Wendland}, page 95)
\begin{equation}\label{multiquadricRBF}
M_\beta(x,y)=\frac1{(1+\|x-y\|^2)^\beta}=\frac1{\Gamma(\beta)}\int_0^\infty e^{-\|x-y\|^2t}t^{\beta-1}e^{-t}dt,\ \ x,y\in\bR^d.
\end{equation}
\end{itemize}

As a straightforward application of Corollary \ref{chatranslationcontinuous}, we have the following inclusion relations between the RKHS of kernels of the same kind.

\begin{prop}\label{inclusionsameclass}
The following statements hold true:
\begin{enumerate}[(1)]
\item For $0<\gamma_1<\gamma_2$, $\cH_{G_{\gamma_2}}\subseteq\cH_{G_{\gamma_1}}$ with $$
    \lambda(G_{\gamma_2},G_{\gamma_1})=\left(\frac{\gamma_2}{\gamma_1}\right)^{\frac d2},
    $$
    but $\cH_{G_{\gamma_1}}\nsubseteq\cH_{G_{\gamma_2}}$.

    \item For $0<\sigma_1<\sigma_2$, $\cH_{E_{\sigma_1}}=\cH_{E_{\sigma_2}}$ with
        $$
        \lambda(E_{\sigma_1},E_{\sigma_2})= \lambda(E_{\sigma_2},E_{\sigma_1})=\left(\frac{\sigma_2}{\sigma_1}\right)^d.
        $$

        \item For $0<\sigma_1<\sigma_2$, $\cH_{\cE_{\sigma_1}}=\cH_{\cE_{\sigma_2}}$ with
        $$
        \lambda(\cE_{\sigma_1},\cE_{\sigma_2})=\frac{\sigma_2}{\sigma_1},\ \ \lambda(\cE_{\sigma_2},\cE_{\sigma_1})=\left(\frac{\sigma_2}{\sigma_1}\right)^d.
        $$

        \item For $p,q\in 2\bN$ with $p<q$, $\cH_{B_q}\subseteq\cH_{B_p}$ with $\lambda(B_q,B_p)=1$, but $\cH_{B_p}\nsubseteq\cH_{B_q}$.

            \item For $0<\tau_1<\tau_2$, $\cH_{A_{\tau_2}}\subseteq\cH_{A_{\tau_1}}$ with $\lambda(A_{\tau_2},A_{\tau_1})=\sqrt{\frac{\tau_2}{\tau_1}}$, but $\cH_{A_{\tau_1}}\nsubseteq\cH_{A_{\tau_2}}$.
\end{enumerate}
\end{prop}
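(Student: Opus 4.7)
The plan is to apply Corollary \ref{chatranslationcontinuous} to each of the five families. In every case the two Fourier densities $u,v$ read off from (\ref{gaussian})--(\ref{ANOVA}) are positive almost everywhere on $\bR^d$---the B-spline density $b_p$ vanishes only on a countable union of coordinate hyperplanes---so the inclusion $\cH_K\subseteq\cH_G$ is equivalent to the essential boundedness of $u/v$, and $\lambda(K,G)=\|u/v\|_{L^\infty(\bR^d)}$. The whole task therefore reduces to an elementary supremum computation for each kernel pair.

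Cases (1), (4) and (5) proceed by inspection. For the Gaussian pair with $\gamma_1<\gamma_2$, the ratio $g_{\gamma_2}/g_{\gamma_1}=(\gamma_2/\gamma_1)^{d/2}\exp\bigl(-(\gamma_2-\gamma_1)\|\xi\|^2/4\bigr)$ is maximized at $\xi=0$ while its reciprocal blows up as $\|\xi\|\to\infty$. For the B-splines with $p<q$ both even, the ratio $b_q/b_p=\prod_j\sinc_{\frac12}(\xi_j)^{q-p}$ is non-negative (since $q-p$ is even) and bounded above by $1$, attained at the origin; its reciprocal is unbounded near any zero of $\sinc_{\frac12}$. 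For the ANOVA pair with $\tau_1<\tau_2$, the termwise inequality $e^{-\tau_2\xi_j^2/4}\le e^{-\tau_1\xi_j^2/4}$ yields $a_{\tau_2}/a_{\tau_1}\le\sqrt{\tau_2/\tau_1}$ with equality at $\xi=0$, while along the diagonal $\xi_1=\cdots=\xi_d=R\to\infty$ the reciprocal grows like $\sqrt{\tau_1/\tau_2}\,e^{(\tau_2-\tau_1)R^2/4}$.

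The slightly richer computations are (2) and (3). For (3), setting $r=\|\xi\|^2$, the ratio
$$
\frac{\psi_{\sigma_1}(\xi)}{\psi_{\sigma_2}(\xi)}=\left(\frac{\sigma_1}{\sigma_2}\right)^d\left(\frac{1+\sigma_2^2 r}{1+\sigma_1^2 r}\right)^{(d+1)/2}
$$
is monotone increasing in $r$, rising from $(\sigma_1/\sigma_2)^d$ at $r=0$ to the limit $\sigma_2/\sigma_1$, so its essential supremum is $\sigma_2/\sigma_1$; the reciprocal is monotone decreasing, attaining its maximum $(\sigma_2/\sigma_1)^d$ at $r=0$, which yields the asymmetric $\lambda(\cE_{\sigma_2},\cE_{\sigma_1})$. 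For (2) the coordinatewise factorization makes each factor $(1+\sigma_2^2\xi_j^2)/(1+\sigma_1^2\xi_j^2)$ monotone from $1$ to $(\sigma_2/\sigma_1)^2$, and a matching computation shows both $\|\varphi_{\sigma_1}/\varphi_{\sigma_2}\|_\infty$ and $\|\varphi_{\sigma_2}/\varphi_{\sigma_1}\|_\infty$ equal $(\sigma_2/\sigma_1)^d$, although attained at opposite extremes. The only real point of care throughout is locating the supremum of each ratio---at the origin or at infinity---and in case (3) recognizing that the monotonicity of the ratio in $r$ places the two maxima at opposite ends of $[0,\infty)$, which is the source of the asymmetry $\lambda(\cE_{\sigma_1},\cE_{\sigma_2})\ne\lambda(\cE_{\sigma_2},\cE_{\sigma_1})$.
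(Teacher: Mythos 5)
Your proof is correct and follows exactly the route the paper intends: the paper states this proposition without a written proof, calling it a ``straightforward application'' of Corollary \ref{chatranslationcontinuous}, and your case-by-case computation of the essential suprema of the Fourier density ratios supplies precisely those details. All five suprema are correctly located (origin versus infinity), including the asymmetry in case (3) and the coincidence of the two constants in case (2).
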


The inclusion relation for the RKHS of two inverse multiquadrics is more involved and is separated below.

\begin{theorem}\label{twomultiquadrics}
Let $\beta_1,\beta$ be two distinct positive constants. There holds $\cH_{M_{\beta_1}}\subseteq\cH_{M_{\beta_2}}$ if and only if $\frac d2<\beta_1<\beta_2$.
\end{theorem}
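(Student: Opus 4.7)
The plan is to apply Corollary~\ref{chatranslationcontinuous}. Because the integrand in (\ref{multiquadricfourier}) is strictly positive, $m_\beta(\xi)>0$ for every $\xi\in\bR^d$ and every $\beta>0$, so the inclusion $\cH_{M_{\beta_1}}\subseteq\cH_{M_{\beta_2}}$ is equivalent to the essential boundedness of $R(\xi):=m_{\beta_1}(\xi)/m_{\beta_2}(\xi)$ on $\bR^d$. Since each $m_\beta$ is radial and continuous for $\xi\neq 0$, this reduces the problem to checking the behavior of $R$ as $r:=\|\xi\|$ tends to $0$ and to $\infty$.

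The next step is to put (\ref{multiquadricfourier}) into closed form via the modified Bessel function of the second kind. Using the classical identity
\[
K_\nu(z)=\tfrac{1}{2}\bigl(\tfrac{z}{2}\bigr)^{\nu}\int_0^\infty t^{-\nu-1}e^{-t-z^2/(4t)}\,dt
\]
with $\nu=d/2-\beta$, together with the symmetry $K_{-\nu}=K_\nu$, I obtain
\[
m_\beta(\xi)=\frac{2}{(2\sqrt{\pi})^d\,\Gamma(\beta)}\Bigl(\frac{r}{2}\Bigr)^{\beta-d/2}K_{\beta-d/2}(r),
\]
and hence
\[
R(\xi)=\frac{\Gamma(\beta_2)}{\Gamma(\beta_1)}\Bigl(\frac{r}{2}\Bigr)^{\beta_1-\beta_2}\frac{K_{\beta_1-d/2}(r)}{K_{\beta_2-d/2}(r)}.
\]

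The rest is asymptotic analysis based on standard facts about $K_\nu$. At infinity, $K_\nu(r)\sim\sqrt{\pi/(2r)}\,e^{-r}$ uniformly in $\nu$ gives $R(\xi)\sim c\,r^{\beta_1-\beta_2}$, which forces the necessary condition $\beta_1\le\beta_2$; combined with $\beta_1\neq\beta_2$ this yields $\beta_1<\beta_2$. At the origin, the asymptotics $K_\nu(r)\sim\tfrac{1}{2}\Gamma(|\nu|)(r/2)^{-|\nu|}$ for $\nu\neq 0$ and $K_0(r)\sim-\log r$ reduce matters to a short case analysis on the signs of $\nu_i:=\beta_i-d/2$: when both $\nu_i>0$, the two Bessel factors together contribute $r^{\beta_2-\beta_1}$, which exactly cancels the prefactor and leaves $R$ bounded near $0$; in every remaining case (one or both $\nu_i\le 0$) a strictly negative power of $r$, or a $\log r$ singularity when some $\nu_i=0$, survives and $R$ blows up. Combining the two necessary-and-sufficient conditions yields precisely $d/2<\beta_1<\beta_2$. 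I do not foresee any serious obstacle: the only nontrivial step is the Bessel identification, and the subsequent case analysis is mechanical once the standard asymptotic formulas for $K_\nu$ are invoked, with the borderline cases $\beta_i=d/2$ requiring slight extra care because of the logarithm.
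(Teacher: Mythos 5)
Your proposal is correct and follows the same overall strategy as the paper: reduce the inclusion to essential boundedness of $m_{\beta_1}/m_{\beta_2}$ via Corollary \ref{chatranslationcontinuous}, identify $m_\beta$ with a modified Bessel function of the second kind (your closed form agrees, after simplification, with the paper's $m_\beta(\xi)=\frac{2^{1-\beta}}{(\sqrt{2\pi})^d\Gamma(\beta)}\|\xi\|^{\beta-\frac d2}K_{\beta-\frac d2}(\|\xi\|)$), and then analyze the ratio at infinity and at the origin. The behavior at infinity is handled identically in both arguments and yields $\beta_1<\beta_2$. Where you diverge is at the origin: you invoke the standard small-argument asymptotics $K_\nu(r)\sim\frac12\Gamma(|\nu|)(r/2)^{-|\nu|}$ (and $K_0(r)\sim-\log r$) and run a clean case analysis on the signs of $\beta_i-\frac d2$, whereas the paper avoids quoting these asymptotics and instead works directly with the integral representation (\ref{multiquadricfourier}): monotone convergence gives the finite/infinite dichotomy of $\lim_{\xi\to0}m_\beta(\xi)$ at $\beta=\frac d2$, and the delicate case $\beta_1<\beta_2\le\frac d2$ is settled by a change of variables plus an explicit logarithmic estimate. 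Your route is shorter and more systematic once the Bessel asymptotics are granted; the paper's is more self-contained, relying only on the two $K_\nu$ inequalities it quotes from Wendland and on elementary integral bounds. Your case analysis at the origin checks out in all subcases, including the borderline $\beta_i=\frac d2$ ones. One small caution: the large-argument asymptotic $K_\nu(r)\sim\sqrt{\pi/(2r)}\,e^{-r}$ is \emph{not} uniform in $\nu$, so you should not claim uniformity; it is also unnecessary here, since only the two fixed orders $\beta_1-\frac d2$ and $\beta_2-\frac d2$ are involved.
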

\begin{proof}
Suppose first that $\beta_1>\beta_2$. By the same technique used in Theorem 6.13, \cite{Wendland} and equation (\ref{multiquadricfourier}), one obtains for all $\beta>0$ that
\begin{equation}\label{twomultiquadricseq1}
m_\beta(\xi)=\frac{2^{1-\beta}}{(\sqrt{2\pi})^d\Gamma(\beta)}\|\xi\|^{\beta-\frac d2}K_{\beta-\frac d2}(\|\xi\|),\ \ \xi\ne 0,
\end{equation}
where $K_\nu$, $\nu\in\bR$ is the modified Bessel functions defined by
$$
K_\nu(r):=\int_0^\infty e^{-r\cosh t}\cosh(\nu t)dt,\ \ r>0.
$$
We use the estimates (see, \cite{Wendland}, pages 52-53) about $K_\nu$ that there exists a constant $C_\nu$ depending on $\nu$ only such that
\begin{equation}\label{twomultiquadricseq2}
K_\nu(r)\ge C_\nu \frac{e^{-r}}{\sqrt{r}},\ \ r\ge1
\end{equation}
and that
\begin{equation}\label{twomultiquadricseq3}
K_\nu(r)\le\sqrt{2\pi}\frac{e^{-r}}{\sqrt{r}}\exp\left(\frac{|\nu|^2}{2r}\right),\ \ r>0.
\end{equation}
Combining equations (\ref{twomultiquadricseq1}), (\ref{twomultiquadricseq2}), and (\ref{twomultiquadricseq3}), we obtain for $\beta_1>\beta_2$ that
\begin{equation}\label{twomultiquadricseq4}
\frac{m_{\beta_1}(\xi)}{m_{\beta_2}(\xi)}\ge\frac{C_{\beta_1-\frac d2} 2^{\beta_2-\beta_1}\Gamma(\beta_2)}{\sqrt{2\pi}\Gamma(\beta_1)}\|\xi\|^{\beta_1-\beta_2}\exp\left(-\frac{|\beta_2-\frac d2|^2}{2\|\xi\|}\right),\ \ \ \|\xi\|\ge 1.
\end{equation}
Since the right hand side above goes to infinity as $\|\xi\|\to\infty$, we get by Corollary \ref{chatranslationcontinuous} that $\cH_{M_{\beta_1}}\nsubseteq\cH_{M_{\beta_2}}$ when $\beta_1>\beta_2$.

By monotone convergence theorem, we have by equation (\ref{multiquadricfourier}) for all $\beta>0$ that
\begin{equation}\label{twomultiquadricseq5}
\lim_{\xi\to 0}m_\beta(\xi)=\left\{
\begin{array}{ll}
+\infty,&\mbox{ if }\beta\le \frac d2,\\
\frac1{(2\sqrt{\pi})^d}\frac{\Gamma(\beta-\frac d2)}{\Gamma(\beta)}<+\infty,&\mbox{ if }\beta>\frac d2.
\end{array}
\right.
\end{equation}
Therefore, if $\beta_1\le \frac d2<\beta_2$ then $m_{\beta_1}(\xi)/m_{\beta_2}(\xi)$ is unbounded on a neighborhood of the origin. As a consequence, $\cH_{M_{\beta_1}}\nsubseteq\cH_{M_{\beta_2}}$ in this case.

Suppose that $\frac d2<\beta_1<\beta_2$. Then by (\ref{twomultiquadricseq5}), $m_{\beta_1}(\xi)/m_{\beta_2}(\xi)$ is bounded on a neighborhood of the origin. Also, by (\ref{twomultiquadricseq4}),
$$
\lim_{\|\xi\|\to\infty}\frac{m_{\beta_1}(\xi)}{m_{\beta_2}(\xi)}=0.
$$
As $m_{\beta_1}(\xi)/m_{\beta_2}(\xi)$ is continuous on $\bR^d\setminus\{0\}$, it is hence bounded therein. By Corollary \ref{chatranslationcontinuous}, $\cH_{M_{\beta_1}}\subseteq\cH_{M_{\beta_2}}$ when $\frac d2<\beta_1<\beta_2$.

We now discuss the last case that $\beta_1<\beta_2\le \frac d2$. We shall show that in this case $\cH_{M_{\beta_1}}\nsubseteq\cH_{M_{\beta_2}}$ by proving that $m_{\beta_1}(\xi)/m_{\beta_2}(\xi)$ is unbounded on a neighborhood of the origin. To this end, let $\|\xi\|\le 1$ and use the change of variables $t=\|\xi\|^2s$ in (\ref{multiquadricfourier}) to get that
\begin{equation}\label{twomultiquadricseq7}
\frac{m_{\beta_1}(\xi)}{m_{\beta_2}(\xi)}=\|\xi\|^{2(\beta_1-\beta_2)}\frac{\Gamma(\beta_2)}{\Gamma(\beta_1)}\frac{\displaystyle{\int_0^\infty s^{\beta_1-\frac d2-1}\exp\left(-\frac1{4s}-\|\xi\|^2s\right)ds}}{\displaystyle{\int_0^\infty s^{\beta_2-\frac d2-1}\exp\left(-\frac1{4s}-\|\xi\|^2s\right)ds}}.
\end{equation}
Thus, if $\beta_2<\frac d2$ then we have for $\|\xi\|\le 1$ that
$$
\frac{m_{\beta_1}(\xi)}{m_{\beta_2}(\xi)}\ge \|\xi\|^{2(\beta_1-\beta_2)}\frac{\Gamma(\beta_2)}{\Gamma(\beta_1)}\frac{\displaystyle{\int_0^\infty s^{\beta_1-\frac d2-1}\exp\left(-\frac1{4s}-s\right)ds}}{\displaystyle{\int_0^\infty s^{\beta_2-\frac d2-1}\exp\left(-\frac1{4s}\right)ds}}.
$$
The right hand side above is unbounded when $\|\xi\|\to 0$. When $\beta_2=\frac d2$, we estimate that
$$
\int_0^\infty s^{\beta_2-\frac d2-1}\exp\left(-\frac1{4s}-\|\xi\|^2s\right)ds\le \int_0^1\frac 1s e^{-\frac1{4s}}ds+\int_{1}^\infty \frac1se^{-\|\xi\|^2s}ds.
$$
A change of variables $\|\xi\|^2s=t$ then yields for $\|\xi\|<1$ that
$$
\int_{1}^\infty \frac1se^{-\|\xi\|^2s}ds=\int_{\|\xi\|^2}^\infty \frac1te^{-t}dt\le \int_{\|\xi\|^2}^1 \frac1tdt+\int_1^\infty \frac{e^{-t}}tdt=-2\ln(\|\xi\|)+\int_1^\infty \frac{e^{-t}}tdt.
$$
Combining the above two equations with (\ref{twomultiquadricseq7}) yields that
$$
\frac{m_{\beta_1}(\xi)}{m_{\beta_2}(\xi)}\ge \|\xi\|^{2(\beta_1-\beta_2)}\frac{\Gamma(\beta_2)}{\Gamma(\beta_1)}\frac{\displaystyle{\int_0^\infty s^{\beta_1-\frac d2-1}\exp\left(-\frac1{4s}-s\right)ds}}{\displaystyle{\int_0^1\frac 1s e^{-\frac1{4s}}ds+\int_1^\infty \frac{e^{-t}}tdt}-2\ln(\|\xi\|)},\ \ \|\xi\|<1.
$$
The right hand side above goes to infinity as $\|\xi\|\to0$. The proof is complete.
\end{proof}

The main purpose of this section is to explore the inclusion relationships among RKHS of different kinds of translation invariant kernels given above. We present the results in the form of a table.

\begin{theorem}
Let $p\in 2\bN$ and $\gamma,\sigma_1,\sigma_2,\beta,\tau$ be positive constants. The following inclusion relations of RKHS hold true.
$$
\begin{array}{|c|c|c|c|c|c|c|}\hline
&\cH_{B_p}&\cH_{G_\gamma}&\cH_{E_{\sigma_1}}&\cH_{\cE_{\sigma_2}}&\cH_{M_\beta}&\cH_{A_\tau}\\\hline
\cH_{B_p}&=&\nsubseteq&\subseteq &\subseteq\mbox{ iff }p\ge d+1&\nsubseteq&\nsubseteq\\\hline
\cH_{G_\gamma}&\nsubseteq&=&\subseteq&\subseteq&\subseteq&\subseteq\mbox{ iff }\gamma\ge\tau\\\hline
\cH_{E_{\sigma_1}}&\nsubseteq&\nsubseteq&=&\nsubseteq\mbox{ if }d\ge2&\nsubseteq&\nsubseteq\\\hline
\cH_{\cE_{\sigma_2}}&\nsubseteq&\nsubseteq&\nsubseteq\mbox{ if }d\ge2&=&\nsubseteq&\nsubseteq\\\hline
\cH_{M_\beta}&\nsubseteq&\nsubseteq&\subseteq\mbox{ iff }\beta>\frac d2&\subseteq\mbox{ iff }\beta>\frac d2&=&\nsubseteq\\\hline
\cH_{A_\tau}&\nsubseteq&\nsubseteq\mbox{ if }d\ge2&\nsubseteq&\nsubseteq&\nsubseteq&=\\\hline
\end{array}
$$
\end{theorem}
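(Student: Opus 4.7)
The plan is to determine each off-diagonal entry of the table by inspecting the ratio of the Bochner densities in (\ref{gaussian})--(\ref{ANOVA}) and invoking Corollary~\ref{chatranslationcontinuous}, falling back on Proposition~\ref{characterizationtranslation} for cells involving the ANOVA kernel $A_\tau$ (whose Bochner measure is supported on the union of the coordinate axes and hence singular with respect to Lebesgue measure on $\bR^d$ when $d\ge 2$). Same-class cells are already settled by Proposition~\ref{inclusionsameclass} and Theorem~\ref{twomultiquadrics}, so only cross-class comparisons remain, and these sort naturally according to the decay rate of the densities at infinity and their behaviour near the origin.

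For the Gaussian row and column, $g_\gamma(\xi)\asymp e^{-\gamma\|\xi\|^2/4}$ decays strictly faster than the polynomial densities $\varphi_{\sigma_1}$ and $\psi_{\sigma_2}$ and the Bessel-type $m_\beta(\xi)\asymp \|\xi\|^{\beta-(d+1)/2}e^{-\|\xi\|}$, and these three competitors are strictly positive on $\bR^d$ with finite values at $0$. Hence $g_\gamma/u$ is essentially bounded for $u\in\{\varphi_{\sigma_1},\psi_{\sigma_2},m_\beta\}$, giving the three unconditional ``$\subseteq$'' entries in the Gaussian row, while the reverse ratios $u/g_\gamma$ grow like (polynomial)$\cdot e^{\gamma\|\xi\|^2/4}$ at infinity and yield the corresponding ``$\nsubseteq$'' entries in the Gaussian column. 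For the $B_p$ column, the key observation is that although $\{b_p=0\}$ has Lebesgue measure zero, $b_p$ vanishes to order $p$ on each hyperplane $\xi_j=2\pi k$ with $k\ne 0$, so $u/b_p$ fails to be essentially bounded on any neighbourhood of such a zero whenever $u$ is strictly positive there; this kills every candidate inclusion into $\cH_{B_p}$ across classes.

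The multiquadric cells recycle the asymptotics from the proof of Theorem~\ref{twomultiquadrics}: combining the infinity-asymptotic above for $m_\beta$ with the near-origin dichotomy (\ref{twomultiquadricseq5}) at the threshold $\beta=d/2$ shows that $\cH_{M_\beta}\subseteq\cH_{E_{\sigma_1}}$ and $\cH_{M_\beta}\subseteq\cH_{\cE_{\sigma_2}}$ both hold iff $\beta>d/2$, while the reverse inclusions fail by exponential-vs-polynomial comparison at infinity. For the $B_p$ row, $b_p/\varphi_{\sigma_1}=C\prod_{j=1}^d|\sinc_{\frac12}(\xi_j)|^p(1+\sigma_1^2\xi_j^2)$ is a product of bounded per-coordinate factors once $p\ge 2$, giving $\cH_{B_p}\subseteq\cH_{E_{\sigma_1}}$ unconditionally; the delicate entry $\cH_{B_p}\subseteq\cH_{\cE_{\sigma_2}}$ is decided along $\xi=(t,0,\dots,0)$, where $b_p\asymp t^{-p}$ and $\psi_{\sigma_2}\asymp t^{-(d+1)}$, giving the threshold $p\ge d+1$. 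The $E_{\sigma_1}$-vs-$\cE_{\sigma_2}$ pair is a radial-vs-product polynomial comparison: testing along $(t,\dots,t)$ and along $(t,0,\dots,0)$ exposes the $d\ge 2$ obstructions in both directions.

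The main obstacle is the ANOVA row and column, since $a_\tau$ is not in $L^1(\bR^d)$ for $d\ge 2$ and the cell $\cH_{G_\gamma}\subseteq\cH_{A_\tau}$ must be read in the $d=1$ reduction, where it collapses to a one-variable Gaussian comparison bounded iff $\gamma\ge\tau$. For every other cell in the $A_\tau$ column, Proposition~\ref{characterizationtranslation} requires the competing Bochner measure to be absolutely continuous with respect to the Gaussian-on-axes measure of $A_\tau$, which already rules out every Lebesgue-absolutely-continuous kernel on the list when $d\ge 2$. For the $A_\tau$ row, restricting to each coordinate axis reduces $\cH_{A_\tau}\subseteq\cH_K$ to checking that the 1D Gaussian component is essentially dominated by the trace of the competitor's density; in every non-trivial case this fails because the competing Bochner measure is either Lebesgue-absolutely-continuous (hence vanishes on the axes when $d\ge 2$) or lies on a different singular support. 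The remaining bookkeeping merely assembles these cell-by-cell comparisons into the full table.
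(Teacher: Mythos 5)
Your overall strategy coincides with the paper's: every cell not already settled by Proposition \ref{inclusionsameclass} or Theorem \ref{twomultiquadrics} is decided by comparing Bochner densities through Corollary \ref{chatranslationcontinuous}, and your test directions (the rays $(t,0,\dots,0)$ versus $(t,\dots,t)$, the zeros of $b_p$ on the hyperplanes $\xi_j=2\pi k$, exponential-versus-polynomial decay at infinity, and the near-origin dichotomy (\ref{twomultiquadricseq5}) for $m_\beta$) are exactly the ones used in steps (i)--(xiii) of the paper's proof. One local gap: for the cell $\cH_{B_p}\subseteq\cH_{\cE_{\sigma_2}}$ iff $p\ge d+1$, your single-ray test along $(t,0,\dots,0)$ proves only the necessity of $p\ge d+1$. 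The sufficiency requires a bound on $b_p/\psi_{\sigma_2}$ over all of $\bR^d$, which the paper obtains from the uniform estimate $b_p(\xi)\le (2\pi)^{-d}2^p\|\xi\|_\infty^{-p}$ combined with $\psi_{\sigma_2}(\xi)\gtrsim (1+\sigma_2^2 d\|\xi\|_\infty^2)^{-(d+1)/2}$; you need to supply that step.

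More substantively, your treatment of the ANOVA column genuinely diverges from the paper's. You correctly observe that $a_\tau\notin L^1(\bR^d)$ for $d\ge2$, so (\ref{ANOVA}) is not a valid Bochner representation and the true Bochner measure of $A_\tau$ is singular, concentrated on the union of the coordinate axes; applying Proposition \ref{characterizationtranslation}, you then conclude that no Lebesgue-absolutely-continuous kernel on the list, the Gaussian included, can have its RKHS contained in $\cH_{A_\tau}$ when $d\ge2$, and you confine the entry ``$\subseteq$ iff $\gamma\ge\tau$'' to $d=1$. The paper instead proves that cell in step (viii) by bounding the formal ratio $g_\gamma/a_\tau$, which presupposes the invalid representation (\ref{ANOVA}). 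Your reading is the defensible one: since $\cH_{A_\tau}=\sum_{j}\cH_{K_j}$ with $K_j(x,y)=\exp(-(x_j-y_j)^2/\tau)$, every element of $\cH_{A_\tau}$ is an additive function $\sum_j g_j(x_j)$, whereas $G_\gamma(0,\cdot)=\prod_j e^{-x_j^2/\gamma}$ is not additive for $d\ge2$ (its mixed second partials do not vanish), so $\cH_{G_\gamma}\nsubseteq\cH_{A_\tau}$ for every $\gamma,\tau$ when $d\ge2$. The consequence is that on this one cell your argument does not prove the theorem as stated --- but that is because the stated entry, and the paper's own proof of it, are incorrect for $d\ge2$. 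You should make this contradiction explicit rather than silently reinterpreting the cell in the $d=1$ reduction, and note that the same issue does not affect the other ANOVA entries, which remain $\nsubseteq$ under either reading.
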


We break the task of proving this result into several steps as follows.
\begin{enumerate}[(i)]
\item For any dimension $d\in\bN$ and parameters $p\in2\bN$, $\gamma,\tau>0$, $\cH_{B_p}\nsubseteq \cH_{K}$ and $\cH_K\nsubseteq \cH_{B_p}$
for $K=G_\gamma$ or $K=A_\tau$.

\begin{proof}
We first discuss the case when $K=G_\gamma$. It is clear that $b_p/g_\gamma$ is unbounded on $\bR^d$. By Corollary \ref{chatranslationcontinuous},
$\cH_{B_p}\nsubseteq \cH_{G_\gamma}$. On the other hand, $b_p$ possesses zeros on $\bR^d$ while $g_\gamma$ is everywhere positive. As they
are both continuous, there does not exist a positive constant $\lambda>0$ such that $g_\gamma(\xi)\le \lambda^2 b_p(\xi)$ for
almost every $\xi\in\bR^d$. As a consequence, we obtain by Corollary \ref{chatranslationcontinuous} that $\cH_{G_\gamma}\nsubseteq\cH_{B_p}$.
The other case when $K=A_\tau$ can be handled in a similar way.
\end{proof}

\item For any $d\in\bN$, $\sigma_2>0$ and $p\in2\bN$, $\cH_{\cE_{\sigma_2}}\nsubseteq\cH_{B_p}$. There holds
$\cH_{B_p}\subseteq\cH_{\cE_{\sigma_2}}$ if and only if $p\ge d+1$, in which case
\begin{equation}\label{estimatebpl2exponential}
\lambda(B_p,\,\cE_{\sigma_2})\le \frac{2^{p-d}}{\sigma_2^d\pi^{\frac{d-1}2}} \frac{(1+\sigma_2^2d)^{\frac{d+1}2}}{\Gamma(\frac{d+1}2)}.
\end{equation}
\begin{proof}
The function $\psi_{\sigma_2}$ in (\ref{l2exponential}) is continuous and positive everywhere on $\bR^d$.
By arguments used before, $\cH_{\cE_{\sigma_2}}\nsubseteq \cH_{B_p}$. Assume that $p<d+1$.
We choose $\xi_1=(2n+1)\pi$ and $\xi_j=0$ for $j\ge2$ to get that $b_p(\xi)=O(n^{-p})$ while $\psi_{\sigma_2}(\xi)=O(n^{-(d+1)})$
as $n$ tends to infinity. Therefore, $b_p/\psi_{\sigma_2}$ is unbounded on $\bR^d$, implying that $\cH_{B_p}\nsubseteq \cH_{\cE_{\sigma_2}}$.

Suppose that $p\ge d+1$. If $\|\xi\|_\infty:=\max\{|\xi_j|:j\in\bN_d\}\le1$ then
$$
b_p(\xi)\le \frac1{(2\pi)^d}, \ \ \psi_{\sigma_2}(\xi)\ge \frac{\Gamma(\frac{d+1}2)}{\pi^{\frac{d+1}2}}\frac{\sigma_2^d}{(1+\sigma_2^2d)^{\frac{d+1}2}}.
$$
It follows that
\begin{equation}\label{Bpl2exponentialestimateeq1}
\frac{b_p(\xi)}{\psi_{\sigma_2}(\xi)}\le \frac1{(2\sigma_2\pi)^d} \frac{\pi^{\frac{d+1}2}}{\Gamma(\frac{d+1}2)}
(1+\sigma_2^2d)^{\frac{d+1}2},\quad
\|\xi\|_\infty\le 1.
\end{equation}
When $\|\xi\|_\infty\ge 1$,
$$
b_p(\xi)\le \frac1{(2\pi)^d}\frac{2^p}{\|\xi\|_\infty^p},
$$
which implies by $p\ge d+1$ that for $\|\xi\|_\infty\ge 1$,
$$
\begin{array}{rl}
\displaystyle{\frac{b_p(\xi)}{\psi_{\sigma_2}(\xi)}}&\displaystyle{\le\frac{2^p}{(2\pi)^d} \frac{\pi^{\frac{d+1}2}}{\Gamma(\frac{d+1}2)}
\frac{(1+\sigma_2^2d\|\xi\|_\infty^2)^{\frac{d+1}2}}{\sigma_2^d\|\xi\|_\infty^p}
\le \frac{2^p}{(2\sigma_2\pi)^d} \frac{\pi^{\frac{d+1}2}}
{\Gamma(\frac{d+1}2)}\biggl(\frac1{\|\xi\|_\infty^2}+\sigma_2^2d\biggr)^{\frac{d+1}2}}\\
&\displaystyle{\le\frac{2^p}{(2\sigma_2\pi)^d} \frac{\pi^{\frac{d+1}2}}{\Gamma(\frac{d+1}2)}(1+\sigma_2^2d)^{\frac{d+1}2}}.
\end{array}
$$
By Corollary \ref{chatranslationcontinuous}, the above inequality together with (\ref{Bpl2exponentialestimateeq1}) proves (\ref{estimatebpl2exponential}).
\end{proof}

\item For any $d\in\bN$, $\sigma_1>0$ and $p\in2\bN$, $\cH_{E_{\sigma_1}}\nsubseteq\cH_{B_p}$. There holds
$\cH_{B_p}\subseteq\cH_{E_{\sigma_1}}$ and
\begin{equation}\label{estimatebpl1exponential}
\lambda(B_p,\,E_{\sigma_1})\le 2^{d}\left(\sigma_1+\frac1{\sigma_1}\right)^{d}.
\end{equation}
\begin{proof}
The relation $\cH_{E_{\sigma_1}}\nsubseteq\cH_{B_p}$ follows from that $\varphi_{\sigma_1}$ is positive and continuous everywhere on $\bR^d$.
Using an estimate method similar to that in (ii), we get that
$$
(\sinc_{\frac12}(t))^p(1+\sigma_1^2t^2)\le (\sinc_{\frac12}(t))^2(1+\sigma_1^2t^2)\le 4(1+\sigma_1^2)\mbox{ for all }t\in\bR,
$$
which combined with the explicit form of $b_p$ and $\varphi_{\sigma_1}$ leads to (\ref{estimatebpl1exponential}).
\end{proof}

\item For any $d\in\bN$, $\sigma_1>0$ and $\gamma>0$, $\cH_{E_{\sigma_1}}\nsubseteq\cH_{G_\gamma}$. There holds
$\cH_{G_\gamma}\subseteq\cH_{E_{\sigma_1}}$ and
\begin{equation}\label{estimategaussianl1exponential}
\lambda(G_\gamma,\,E_{\sigma_1})\le \biggl(\max(1,\frac{4\sigma_1^2}\gamma)\,\frac{\sqrt{\gamma\pi}}{2\sigma_1}\biggr)^{d}.
\end{equation}
\begin{proof}
It is clear that $\varphi_{\sigma_1}/g_\gamma$ is unbounded on $\bR^d$. By Corollary \ref{chatranslationcontinuous},
$\cH_{E_{\sigma_1}}\nsubseteq\cH_{G_\gamma}$. On the other hand, one has that
$$
\frac{g_\gamma(\xi)}{\varphi_{\sigma_1}(\xi)}= \biggl(\frac{\sqrt{\gamma\pi}}{2\sigma_1}\biggr)^d
\exp(-\frac{\gamma\|\xi\|^2}4)\prod_{j=1}^d (1+\sigma_1^2\xi_j^2),\ \ \xi\in\bR^d,
$$
which together with the observation that
$$
(1+\sigma_1^2\xi_j^2)\le \max(1,\frac{4\sigma_1^2}\gamma)\exp\biggl(\frac{\gamma\xi_j^2}4\biggr),\ \ \xi_j\in\bR
$$
proves (\ref{estimategaussianl1exponential}).
\end{proof}

\item For any $d\in\bN$, $\sigma_1>0$ and $\gamma>0$, $\cH_{\cE_{\sigma_2}}\nsubseteq\cH_{G_\gamma}$. There holds
$\cH_{G_\gamma}\subseteq\cH_{\cE_{\sigma_2}}$ and
\begin{equation}\label{estimategaussianl2exponential}
\lambda(G_\gamma,\,\cE_{\sigma_2})\le \biggl(\max(1,\frac{(2d+2)\sigma_2^2}\gamma)\biggr)^{\frac{d+1}2}
\,\biggl(\frac{\sqrt{\gamma}}{2\sigma_2}\biggr)^d\frac{\pi^{\frac{d-1}2}}{\Gamma(\frac{d+1}2)}.
\end{equation}
However, $\lambda(G_\gamma,\,\cE_{\sigma_2})$ does not have a common upper bound as $d$ varies on $\bN$.

\begin{proof}
As $\psi_{\sigma_2}/g_\gamma$ is clearly unbounded on $\bR^d$, $\cH_{\cE_{\sigma_2}}\nsubseteq\cH_{G_\gamma}$. We then estimate that for all
$\xi\in\bR^d$,
$$
(1+\sigma_2^2\|\xi\|^2)^{\frac{d+1}2}\le \biggl(\max(1,\frac{(2d+2)\sigma_2^2}\gamma)\biggr)^{\frac{d+1}2}
(1+\frac{\gamma\|\xi\|^2}{2d+2})^{\frac{d+1}2}\le
\biggl(\max(1,\frac{(2d+2)\sigma_2^2}\gamma)\biggr)^{\frac{d+1}2}\exp(\frac{\gamma\|\xi\|^2}4),
$$
which immediately implies that $g_\gamma(\xi)/\psi_{\sigma_2}(\xi)$ is bounded by the right hand side of (\ref{estimategaussianl2exponential}).
Equation (\ref{estimategaussianl2exponential}) now follows from Corollary \ref{chatranslationcontinuous}.

To prove the third claim, we use the Laplace transform representations (\ref{gaussianRBF}) and (\ref{l2exponentialRBF}). One observes that
the Gaussian kernel $G_\gamma$ corresponds to the delta measure $\delta_{\gamma^{-1}}$, which is singular
 with respect to the Lebesgue measure while $\cE_{\sigma_2}$ is represented by
the Borel measure
$$
\frac1{2\sigma_2\sqrt{\pi}}\exp(-\frac1{4\sigma_2^2t})\frac1{t^{3/2}}dt,
$$
which is absolutely continuous with respect to the Lebesgue measure. Thus, $\delta_{\gamma^{-1}}$ is not absolutely continuous with respect to
the above measure. By Proposition \ref{characterizeRBF}, $\lambda(G_\gamma,\,\cE_{\sigma_2})$ does not have a common upper bound as the dimension
$d$ varies on $\bN$.
\end{proof}

\item For any $d\ge 2$, $\sigma_1>0$ and $\sigma_2>0$, $\cH_{\cE_{\sigma_2}}\nsubseteq\cH_{E_{\sigma_1}}$
and $\cH_{E_{\sigma_1}}\nsubseteq \cH_{\cE_{\sigma_2}}$.

\begin{proof}
We first let $\xi_1=n$ and $\xi_j=0$ for $j\ge2$ to get that $\varphi_{\sigma_1}(\xi)=O(n^{-2})$ and
$\psi_{\sigma_2}(\xi)=O(n^{-(d+1)})$ as $n$ tends to infinity. As $d\ge2$, $\varphi_{\sigma_1}(\xi)/\psi_{\sigma_2}(\xi)$ is
unbounded on $\bR^d$, implying that $\cH_{E_{\sigma_1}}\nsubseteq \cH_{\cE_{\sigma_2}}$.
The choice $\xi_j=n$ for all $j\in\bN_d$ yields that $\varphi_{\sigma_1}(\xi)=O(n^{-2d})$ and
$\psi_{\sigma_2}(\xi)=O(n^{-(d+1)})$ as $n\to\infty$. Therefore, $\psi_{\sigma_2}(\xi)/\varphi_{\sigma_1}(\xi)$
is unbounded on $\bR^d$. It implies that $\cH_{\cE_{\sigma_2}}\nsubseteq\cH_{E_{\sigma_1}}$.
\end{proof}

\item For any $d\ge 2$, $\sigma_1,\sigma_2,\tau>0$, $\cH_{A_\tau}\nsubseteq\cH_{K}$
and $\cH_{K}\nsubseteq \cH_{A_\tau}$ for either $K=E_{\sigma_1}$ or $K=\cE_{\sigma_2}$.

\begin{proof}
We discuss $K=E_{\sigma_1}$ only as the other case can be dealt with similarly. Choosing $\xi_j=n$ for all $j\in\bN_d$ yields that
$\varphi_{\sigma_1}(\xi)/a_\tau(\xi)\to\infty$ as $n\to\infty$. The other choice
$\xi_1=n$ and $\xi_j=0$ for $j\ge2$ tells that $a_\tau(\xi)/\varphi_{\sigma_1}(\xi)\to\infty$ as $n\to\infty$. Therefore, neither
$\varphi_{\sigma_1}/a_\tau$ nor $a_\tau/\varphi_{\sigma_1}$ is bounded on $\bR^d$. The result now follows from Corollary \ref{chatranslationcontinuous}.
\end{proof}

\item For any $d\ge2$, $\gamma,\tau>0$, $\cH_{A_\tau}\nsubseteq \cH_{G_\gamma}$. There holds $\cH_{G_\gamma}\subseteq \cH_{A_\tau}$
if and only if $\gamma\ge \tau$, in which case
\begin{equation}\label{lambdagaussiananova}
\lambda(G_\gamma,A_\tau)= \frac{\sqrt{\gamma}^d}{d\sqrt{\tau}(2\sqrt{\pi})^{d-1}}.
\end{equation}
\begin{proof}
That $\cH_{A_\tau}\nsubseteq \cH_{G_\gamma}$ can be proved in a way similar to that in (vii). If $\gamma<\tau$ then we set $\xi_j=n$ for all
$j\in\bN_d$ to see that $g_\gamma(\xi)/a_\tau(\xi)\to \infty$ as $n\to\infty$. Thus, $\cH_{G_\gamma}\nsubseteq \cH_{A_\tau}$ in this case.
Suppose that $\gamma\ge \tau$. We get for all $\xi\in\bR^d$ that
$$
\frac{g_\gamma(\xi)}{a_\tau(\xi)}=\frac{\sqrt{\gamma}^d}{\sqrt{\tau}(2\sqrt{\pi})^{d-1}}
\frac{\exp(-\frac{\gamma\|\xi\|^2}4)}{\sum_{j=1}^d \exp(-\frac{\tau|\xi_j|^2}4)},
$$
which together with the observation that
$$
\exp(-\frac{\gamma\|\xi\|^2}4)\le \exp(-\frac{\tau|\xi_j|^2}4) \mbox{ for all }j\in\bN_d
$$
implies that
$$
\frac{g_\gamma(\xi)}{a_\tau(\xi)}\le \frac{\sqrt{\gamma}^d}{d\sqrt{\tau}(2\sqrt{\pi})^{d-1}}\mbox{ for all }\xi\in\bR^d.
$$
As the equality is achieved at $\xi=0$, we obtain (\ref{lambdagaussiananova}).
\end{proof}

\item For any $d\in\bN$, $\sigma_2,\beta>0$, $\cH_{\cE_{\sigma_2}}\nsubseteq \cH_{M_\beta}$. There holds $\cH_{M_\beta}\subseteq \cH_{\cE_{\sigma_2}}$ if and only if $\beta>\frac d2$.

\begin{proof}
By (\ref{l2exponentialfourier}) and (\ref{multiquadricfourier}), we have for all $\xi\in\bR^d$ that
\begin{equation}\label{estimatemultil2exponentialeq1}
\frac{m_\beta(\xi)}{\psi_{\sigma_2}(\xi)}=\frac1{(2\sigma_2\sqrt{\pi})^d}\frac{\pi^{\frac{d+1}2}}{\Gamma(\beta)\Gamma(\frac{d+1}2)}
\int_0^\infty t^{\beta-\frac d2-1}(1+\sigma_2^2\|\xi\|^2)^{\frac{d+1}2}\exp\biggl(-\frac{\|\xi\|^2}{4t}-t\biggr)dt.
\end{equation}
Note that when $\|\xi\|\ge1$,
$$
\exp\biggl(-\frac{\|\xi\|^2}{4t}-t\biggr)=\exp\biggl(-\frac{\|\xi\|^2}{8t}-\frac t2\biggr)\exp\biggl(-\frac{\|\xi\|^2}{8t}-\frac t2\biggr)\le \exp(-\frac{\|\xi\|}2)\exp\biggl(-\frac{1}{8t}-\frac t2\biggr).
$$
Thus, for $\|\xi\|\ge1$
$$
\int_0^\infty t^{\beta-\frac d2-1}(1+\sigma_2^2\|\xi\|^2)^{\frac{d+1}2}\exp\biggl(-\frac{\|\xi\|^2}{4t}-t\biggr)dt\le \frac{(1+\sigma_2^2\|\xi\|^2)^{\frac{d+1}2}}{\exp(\frac{\|\xi\|}2)}\int_0^\infty t^{\beta-\frac d2-1}\exp\biggl(-\frac{1}{8t}-\frac t2\biggr)dt.
$$
We hence get that $m_\beta(\xi)/\psi_{\sigma_2}(\xi)\to 0$ as $\|\xi\|\to\infty$. It implies that $\cH_{\cE_{\sigma_2}}\nsubseteq \cH_{M_\beta}$.

To prove the rest of the claims, one first sees by the Lebesgue dominated convergence theorem that $m_\beta/\psi_{\sigma_2}$ is continuous on $\bR^d\setminus\{0\}$. We also have that $m_\beta(\xi)/\psi_{\sigma_2}(\xi)\to 0$ as $\|\xi\|\to\infty$. For these two reasons, $m_\beta/\psi_{\sigma_2}$ is essentially bounded on $\bR^d$ if and only if it is bounded on a neighborhood of the origin. If $\beta>\frac d2$, we observe that when $\|\xi\|\le 1$,
$$
\int_0^\infty t^{\beta-\frac d2-1}(1+\sigma_2^2\|\xi\|^2)^{\frac{d+1}2}\exp\biggl(-\frac{\|\xi\|^2}{4t}-t\biggr)dt
\le (1+\sigma_2^2)^{\frac{d+1}2}\int_0^\infty t^{\beta-\frac d2-1}e^{-t}dt<+\infty,
$$
which implies that $m_\beta/\psi_{\sigma_2}$ is essentially bounded on $\bR^d$ when $\beta>\frac d2$. We hence get by Corollary \ref{chatranslationcontinuous} that $\cH_{M_\beta}\subseteq \cH_{\cE_{\sigma_2}}$ in this case. When $\beta\le \frac d2$, by the monotone convergence theorem,
$$
\lim_{\|\xi\|\to 0}\int_0^\infty t^{\beta-\frac d2-1}(1+\sigma_2^2\|\xi\|^2)^{\frac{d+1}2}\exp\biggl(-\frac{\|\xi\|^2}{4t}-t\biggr)dt=
\int_0^\infty t^{\beta-\frac d2-1}e^{-t}dt=+\infty.
$$
It follows from the above equation that $\cH_{M_\beta}\nsubseteq \cH_{\cE_{\sigma_2}}$ when $\beta\le \frac d2$.\end{proof}

\item  For any $d\in\bN$, $\sigma_1,\beta>0$, $\cH_{E_{\sigma_1}}\nsubseteq \cH_{M_\beta}$. There holds $\cH_{M_\beta}\subseteq \cH_{E_{\sigma_1}}$ if and only if $\beta>\frac d2$.

    \begin{proof} The proof is similar to that for (ix).
    \end{proof}

\item For any $d\in\bN$, $p\in2\bN$, $\beta>0$, $\cH_{B_p}\nsubseteq \cH_{M_\beta}$ and $\cH_{M_\beta}\nsubseteq \cH_{B_p}$.

    \begin{proof}
    As $m_\beta$ is positive and continuous on $\bR^d\setminus\{0\}$ while $b_p$ has zeros on $\bR^d\setminus\{0\}$, $\cH_{M_\beta}\nsubseteq \cH_{B_p}$. That $\cH_{B_p}\nsubseteq \cH_{M_\beta}$ can be proved by arguments similar to those in (ix).
    \end{proof}

\item For any $d\in\bN$, $\gamma,\beta>0$, $\cH_{M_\beta}\nsubseteq \cH_{G_\gamma}$ but $\cH_{G_\gamma}\subseteq \cH_{M_\beta}$. The quantity $\lambda(G_\gamma,M_\beta)$ does not have a common upper bound as $d$ varies on $\bN$.

    \begin{proof}
    We start with the observation that
    $$
    \begin{array}{rl}
    \displaystyle{\frac{m_\beta(\xi)}{g_\gamma(\xi)}}&\displaystyle{=\frac{1}{\Gamma(\beta)\gamma^{\frac d2}}\int_0^\infty t^{\beta-\frac d2-1}\exp\biggl(\frac{\gamma \|\xi\|^2}4\biggr)\exp\left(-\frac{\|\xi\|^2}{4t}-t\right)dt}\\
    &\ge \displaystyle{\frac{1}{\Gamma(\beta)\gamma^{\frac d2}}\int_{\frac2\gamma}^\infty t^{\beta-\frac d2-1}\exp\biggl(\frac{\gamma \|\xi\|^2}4-\frac{\|\xi\|^2}{4t}\biggr)e^{-t}dt}\\
    &\displaystyle{\ge \frac{1}{\Gamma(\beta)\gamma^{\frac d2}}\exp\biggl(\frac{\gamma \|\xi\|^2}8\biggr)\int_{\frac2\gamma}^\infty t^{\beta-\frac d2-1}e^{-t}dt}.
    \end{array}
    $$
    Therefore, $m_\beta(\xi)/g_\gamma(\xi)$ tends to infinity as $\|\xi\|\to\infty$. Consequently, $\cH_{M_\beta}\nsubseteq \cH_{G_\gamma}$.

    We also notice by the monotone convergence theorem that
    $$
    \lim_{\|\xi\|\to 0}\frac{m_\beta(\xi)}{g_\gamma(\xi)}=\frac{1}{\Gamma(\beta)\gamma^{\frac d2}}\int_0^\infty t^{\beta-\frac d2-1}e^{-t}dt>0.
    $$
    As $\frac{m_\beta(\xi)}{g_\gamma(\xi)}$ is continuous and positive everywhere on $\bR^d\setminus\{0\}$, the above two estimates imply that there exists some positive constant $\lambda$ such that
    $$
    \frac{m_\beta(\xi)}{g_\gamma(\xi)}\ge \lambda\mbox{ for all }\xi\in\bR^d\setminus\{0\}.
    $$
    We hence conclude that $\cH_{G_\gamma}\subseteq \cH_{M_\beta}$. Recall (\ref{gaussianRBF}) and (\ref{multiquadricRBF}). Since $G_\gamma$ and $M_\beta$ are respectively represented by measures singular and absolutely continuous with respect to the Lebesgue measure, $\lambda(G_\gamma,M_\beta)$ does not have a common upper bound for $d\in\bN$.
    \end{proof}

\item For any $d\in\bN$, $\tau,\beta>0$, $\cH_{A_\tau}\nsubseteq \cH_{M_\beta}$ and $\cH_{M_\beta}\nsubseteq \cH_{A_\tau}$.

\begin{proof}
Firstly, we see for the choice $\xi_1=n$, $\xi_j=0$, $j\ge 2$ that
$$
\lim_{n\to\infty}m_\beta(\xi)=0\mbox{ while }\lim_{n\to\infty}a_\tau(\xi)=\frac{(d-1)\sqrt{\tau}}{2\sqrt{\pi}}.
$$
As a result, $\cH_{A_\tau}\nsubseteq \cH_{M_\beta}$. Secondly, arguments similar to those in (xii) shows that for the choice $\xi_j=n$, $j\in\bN_d$
$$
\lim_{n\to\infty}\frac{m_\beta(\xi)}{a_\tau(\xi)}=+\infty,
$$
which implies that $\cH_{M_\beta}\nsubseteq \cH_{A_\tau}$.
\end{proof}
\end{enumerate}

We close this section with the sinc kernel (\ref{sinc}).

\begin{coro}\label{sinckernel}
There holds for all $\gamma>0$ and $d\in\bN$ that
\begin{equation}\label{estimatesincgaussian}
\lambda(\sinc,G_\gamma)=\frac{\exp(\frac{d\gamma\pi^2}4)}{(\gamma\pi)^{\frac d2}},\ \ \lambda(\sinc,A_\tau)=\frac{\sqrt{\pi}\exp(\frac{\tau\pi^2}4)}{2^{d-1}\pi^d d\sqrt{\tau}}.
\end{equation}
Consequently, $\cH_{\sinc}\subseteq\cH_K$ for $K=E_{\sigma_1},\cE_{\sigma_2}$, and $M_\beta$.
\end{coro}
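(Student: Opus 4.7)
The plan is to read off the Fourier density of the sinc kernel from equation (\ref{sinc}) and then invoke Corollary \ref{chatranslationcontinuous}, which reduces both $\lambda(\sinc,G_\gamma)$ and $\lambda(\sinc,A_\tau)$ to essential suprema of explicit ratios of positive functions. Specifically, (\ref{sinc}) expresses $\sinc(x-y)$ as the inverse Fourier transform of $u(\xi):=\frac{1}{(2\pi)^d}\chi_{[-\pi,\pi]^d}(\xi)$ (after a trivial sign change in $\xi$ bringing it to the form (\ref{translationKG2})). Since $g_\gamma$ and $a_\tau$ are continuous and strictly positive on all of $\bR^d$, the second part of Corollary \ref{chatranslationcontinuous} applies: the ratio $u/g_\gamma$ (resp.\ $u/a_\tau$) vanishes outside $[-\pi,\pi]^d$, so we only need its essential supremum on the compact cube $[-\pi,\pi]^d$, where it is continuous and the supremum is in fact attained.

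For $\lambda(\sinc,G_\gamma)$, I would simplify
\[
\frac{u(\xi)}{g_\gamma(\xi)}=\frac{1}{(\pi\gamma)^{d/2}}\exp\!\Bigl(\tfrac{\gamma\|\xi\|^2}{4}\Bigr),
\]
and observe that $\|\xi\|^2$ is maximized on $[-\pi,\pi]^d$ at the corners $\xi=(\pm\pi,\ldots,\pm\pi)$, where $\|\xi\|^2=d\pi^2$. Plugging this in immediately yields the stated value $\exp(d\gamma\pi^2/4)/(\gamma\pi)^{d/2}$.

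For $\lambda(\sinc,A_\tau)$, the analogous calculation gives
\[
\frac{u(\xi)}{a_\tau(\xi)}=\frac{\sqrt{\pi}}{2^{d-1}\pi^d\sqrt{\tau}}\cdot\frac{1}{\sum_{j=1}^d\exp(-\tau\xi_j^2/4)}.
\]
Each summand in the denominator is a decreasing function of $|\xi_j|$, so on $[-\pi,\pi]^d$ the sum is minimized (and hence the ratio maximized) at the same corners $\xi_j=\pm\pi$, giving $d\exp(-\tau\pi^2/4)$. Substituting this produces the claimed formula.

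Finally, for the ``consequently'' clause, finiteness of $\lambda(\sinc,G_\gamma)$ yields $\cH_{\sinc}\subseteq\cH_{G_\gamma}$ by Lemma \ref{Aronszajn}, and the inclusions $\cH_{G_\gamma}\subseteq\cH_{E_{\sigma_1}}$, $\cH_{G_\gamma}\subseteq\cH_{\cE_{\sigma_2}}$, $\cH_{G_\gamma}\subseteq\cH_{M_\beta}$ have already been established in steps (iv), (v), and (xii) of the preceding theorem; transitivity of set inclusion finishes the proof. There is no genuine obstacle here — the whole argument is a careful bookkeeping of constants combined with the identification of the extremum on the support cube — so the only thing to watch is that the sign conventions in (\ref{sinc}) and (\ref{translationKG2}) match after the obvious change of variables $\xi\mapsto -\xi$.
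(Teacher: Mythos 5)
Your proposal is correct and follows exactly the route the paper intends: the paper's own proof consists of the single line that the formulas ``follow from a straightforward calculation,'' and your computation of $u/g_\gamma$ and $u/a_\tau$ on the support cube $[-\pi,\pi]^d$ via Corollary \ref{chatranslationcontinuous}, with the extrema located at the corners, is that calculation carried out correctly (the constants check out). The transitivity argument for the final clause, using items (iv), (v), and (xii) of the preceding theorem, is likewise the intended reading of ``consequently.''
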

\begin{proof}
Equation (\ref{estimatesincgaussian}) follows from a straightforward calculation.
\end{proof}

\section{Hilbert-Schmidt Kernels}
\setcounter{equation}{0}

By Mercer's theorem \cite{Mercer}, Hilbert-Schmidt kernels represent a large class of reproducing kernels. They were recently used to construct multiscale kernels based on wavelets \cite{Opfer}. We introduce the general form of Hilbert-Schmidt kernels.

Let $a$ be a nonnegative function on $\bN$ and set $a_n := a(n)$, $n\in\bN$. We denote by $\ell_a^2(\bN)$ the Hilbert space
of functions $c$ on $\bN$ such that
$$
\|c\|_{\ell^2_a(\bN)}:=\biggl(\sum_{n=1}^\infty a_n|c_n|^2\biggr)^{1/2} < +\infty.
$$
Its inner product is given by
$$
(c, d)_{\ell_a^2(\bN)} :=  \sum_{n=1}^\infty a_nc_n\overline{d_n}, \quad c,d\in \ell_a^2(\bN).
$$
Suppose that we have a sequence of functions $\phi_n$, $n\in\bN$, on the input space $X$, such that for each
$x\in X$ the function $\Phi(x)$ defined on $\bN$ as
\begin{equation}\label{phinx}
\Phi(x)(n) := \phi_n(x), \quad n\in\bN
\end{equation}
belongs to $\ell_a^2(\bN)$. The Hilbert-Schmidt kernel $K_a$ associate with $a$ is given as
\begin{equation}\label{kernelka}
K_a(x, y) := (\Phi(x), \Phi(y)_{\ell_a^2(\bN)} = \sum_{n=1}^\infty a_n\phi_n(x)\overline{\phi_n(y)},\quad x,y\in X.
\end{equation}
Now suppose that there exits another nonnegative function $b$ on $\bN$ such that
$\Phi(x) \in \ell_b^2(\bN)$ for all $x\in X$. Set
\begin{equation}\label{kernelkb}
K_b(x, y): = (\Phi(x), \Phi(y)_{\ell_b^2(\bN)} = \sum_{n=1}^\infty b_n\phi_n(x)\overline{\phi_n(y)},\quad x,y\in X.
\end{equation}
We shall characterize $\cH_{K_a} \subseteq \cH_{K_b}$ in terms of $a$ and $b$.

\begin{prop}\label{characterizeHilbertSchmidt}
Suppose that $b$ is nontrivial, and $\span\{\Phi(x):x\in X\}$ is dense in both $\ell_a^2(\bN)$ and $\ell_b^2(\bN)$. Then $\cH_{K_a} \subseteq \cH_{K_b}$ if and only if there is a constant $\lambda > 0$ such that $a_n\le \lambda b_n$ for all $n\in\bN$. In this case,
\begin{equation}\label{lambdaHilbertSchmidt}
\lambda(K_a,K_b)=\sup\left\{\frac{a_n}{b_n}:n\in\bN,\ b_n>0\right\}.
\end{equation}
\end{prop}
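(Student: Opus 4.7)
The plan is to convert the set-inclusion into a kernel inequality via Lemma \ref{Aronszajn}: $\cH_{K_a}\subseteq\cH_{K_b}$ if and only if $K_a\ll\lambda K_b$ for some $\lambda\ge 0$. Using the series representations (\ref{kernelka}) and (\ref{kernelkb}) termwise, one writes
\[
\lambda K_b(x,y)-K_a(x,y)=\sum_{n=1}^\infty(\lambda b_n-a_n)\phi_n(x)\overline{\phi_n(y)},\quad x,y\in X,
\]
and then treats the two directions separately.

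For sufficiency, if $a_n\le\lambda b_n$ for all $n$, each summand $(\lambda b_n-a_n)\phi_n(x)\overline{\phi_n(y)}$ is a rank-one positive semi-definite kernel on $X$, so every partial sum is a kernel. Since the series converges pointwise (absolutely) by $\Phi(x)\in\ell^2_a(\bN)\cap\ell^2_b(\bN)$, the limit $\lambda K_b-K_a$ is a kernel, giving $K_a\ll\lambda K_b$.

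The main work lies in the necessity direction. Suppose $\lambda K_b-K_a$ is a kernel. Expanding its quadratic form on any finite data $\{x_j\},\{c_j\}$ yields
\[
\sum_{n=1}^\infty(\lambda b_n-a_n)\Bigl|\sum_j c_j\phi_n(x_j)\Bigr|^2\ge 0,
\]
which, writing $v=\sum_jc_j\Phi(x_j)$, is the norm inequality $\|v\|_{\ell^2_a(\bN)}^2\le\lambda\|v\|_{\ell^2_b(\bN)}^2$ for every $v\in\span\{\Phi(x):x\in X\}$. To extract the coordinate-wise bound $a_n\le\lambda b_n$ for a fixed $n$ with $b_n>0$, I would invoke the density hypothesis and pick $v_k\in\span\{\Phi(x):x\in X\}$ with $v_k\to e_n$ in $\ell^2_b(\bN)$. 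The norm inequality then forces $\{v_k\}$ to be Cauchy in $\ell^2_a(\bN)$ as well, so $v_k\to w$ in $\ell^2_a(\bN)$ for some $w$. Evaluation at the $n$-th coordinate is a continuous functional on either weighted space whenever the weight there is positive, so the two limits agree at index $n$ and give $w(n)=1$; hence $a_n=a_n|w(n)|^2\le\|w\|_{\ell^2_a(\bN)}^2\le\lambda b_n$. The cases $a_n=0$ and $b_n=0$ (which the inclusion hypothesis forces to imply $a_n=0$, since no $\lambda$ can control an $e_n$-like sequence otherwise) are handled as trivial subcases.

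Combining the two directions yields (\ref{lambdaHilbertSchmidt}): the set of admissible constants is exactly $\{\lambda>0:a_n\le\lambda b_n\ \text{for all }n\}$, whose infimum is $\sup\{a_n/b_n:b_n>0\}$. The main technical hurdle is the density-based coordinate extraction in the necessity step; once the norm inequality is upgraded from the dense span to a genuine weight comparison, the characterization and the formula for $\lambda(K_a,K_b)$ follow routinely.
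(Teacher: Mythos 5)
Your sufficiency argument (each summand $(\lambda b_n-a_n)\phi_n(x)\overline{\phi_n(y)}$ is a rank-one kernel, partial sums are kernels, pointwise limits of kernels are kernels) is correct, and your handling of indices with $b_n>0$ in the necessity direction is also correct and is a genuinely different route from the paper's: the paper identifies $\cH_{K_a}$ with $\ell^2_a(\bN)$ via Lemma \ref{featuremapconstruction} and tests the bounded embedding on the single element $c=\delta_{\cdot,k}$, whereas you derive the quadratic-form inequality $\|v\|^2_{\ell^2_a(\bN)}\le\lambda\|v\|^2_{\ell^2_b(\bN)}$ on $\span\Phi(X)$ from $K_a\ll\lambda K_b$ and pass to the limit along approximants of $e_n$ in $\ell^2_b(\bN)$. (You do not even need the Cauchy limit $w$: the chain $a_n|v_k(n)|^2\le\|v_k\|^2_{\ell^2_a(\bN)}\le\lambda\|v_k\|^2_{\ell^2_b(\bN)}$ together with $v_k(n)\to1$ and $\|v_k\|^2_{\ell^2_b(\bN)}\to b_n$ already gives $a_n\le\lambda b_n$.)

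The gap is the parenthetical claim that the inclusion forces $b_n=0\Rightarrow a_n=0$, dismissed as a trivial subcase. Your stated reason, that ``no $\lambda$ can control an $e_n$-like sequence,'' does not go through: when $b_n=0$ the vector $e_n$ is the zero element of $\ell^2_b(\bN)$, so approximating it there is vacuous, and approximating it in $\ell^2_a(\bN)$ produces vectors $v_k$ whose $\ell^2_b$-norms are bounded \emph{below}, not above, by the inequality $\|v_k\|^2_{\ell^2_a}\le\lambda\|v_k\|^2_{\ell^2_b}$ --- nothing diverges. The paper's own proof glosses over exactly the same point (it asserts that $f_c\in\cH_{K_b}$ ``gives'' $b_k>0$ without argument), and in fact the step cannot be repaired under the stated hypotheses. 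Take $X=\bN$, $\phi_1(x)=1/(x+1)$, $\phi_m(x)=\delta_{m-1,x}$ for $m\ge2$, with $a_1=1$, $a_m=m^{-2}$, $b_1=0$, $b_m=1$ for $m\ge2$. Then $\Phi(x)\in\ell^2_a(\bN)\cap\ell^2_b(\bN)$, $\span\Phi(X)$ is dense in $\ell^2_b(\bN)$ trivially and in $\ell^2_a(\bN)$ because any orthogonal $w$ must satisfy $w_{x+1}=-(x+1)w_1$, which lies in $\ell^2_a(\bN)$ only if $w_1=0$. Here $\cH_{K_b}=\ell^2(\bN)$, while every $f\in\cH_{K_a}$ has the form $f(x)=\overline{u_1}/(x+1)+\overline{u_{x+1}}/(x+1)^2$ with $u\in\ell^2_a(\bN)$ and is therefore square-summable; hence $\cH_{K_a}\subseteq\cH_{K_b}$ even though $a_1>0=b_1$ and no admissible $\lambda$ exists. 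So the $b_n=0$ case is not trivial: it is precisely where both your argument and the paper's break down, and a correct version needs an extra hypothesis (for instance $\supp a\subseteq\supp b$, or $b$ everywhere positive) or a weaker conclusion restricted to $\supp b$.
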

\begin{proof}
By Lemma \ref{featuremapconstruction}, the space $\cH_{K_a}$ consists of functions of the form
\begin{equation}\label{characterizeHilbertSchmidteq1}
f_c(x):=(c,\Phi(x))_{\ell^2_a(\bN)}=\sum_{n=1}^\infty c_na_n\overline{\phi_n(x)},\ \ x\in X,\ c\in\ell^2_a(\bN)
\end{equation}
with the norm
$$
\|f_c\|_{\cH_{K_a}}=\|c\|_{\ell^2_a(\bN)}.
$$
Similarly, one has the structure of the space $\cH_{K_b}$.

Suppose that there exists some constant $\lambda>0$ such that $a_n\le\lambda b_n$ for all $n\in\bN$. Let $c$ be an arbitrary but fixed element in $\ell^2_a(\bN)$ and set
$$
\tilde{c}_n:=\left\{
\begin{array}{ll}
0,&\mbox{ if }a_n=0,\\
\frac{a_nc_n}{b_n},&\mbox{ otherwise.}
\end{array}
\right.
$$
One sees that $\tilde{c}\in\ell^2_b(\bN)$ and that $(\tilde{c},\Phi(\cdot))_{\ell^2_b(\bN)}=f_c$. Thus, $f_c\in\cH_{K_b}$, implying that $\cH_{K_a}\subseteq \cH_{K_b}$. Another observation is that
$$
\|f_c\|_{\cH_{K_b}}^2=\sum_{n\in\bN,\,a_n\ne0}\left|\frac{a_nc_n}{b_n}\right|^2b_n\le \sup\{a_n/b_n:n\in\bN,\ a_n\ne0\}\|f_c\|_{\cH_{K_a}}^2.
$$
Moreover, for any $k\in\bN$ with $a_k>0$, the particular choice $c(n):=\delta_{n,k}$, $n\in\bN$, where $\delta_{n,k}$ denotes the Kronecker delta, yields that
$$
\|f_c\|_{\cH_{K_b}}^2=\frac{a_k^2}{b_k}=\frac{a_k}{b_k}\|f_c\|_{\cH_{K_a}}^2.
$$
The above two equations together imply by Proposition \ref{twoquantities} that
$$
\lambda(K_a,K_b)=\beta(K_a,K_b)^2=\sup\left\{\frac{a_n}{b_n}:n\in\bN,\ b_n>0\right\}.
$$

Conversely, suppose that $\cH_{K_a}\subseteq\cH_{K_b}$. As the embedding operator is bounded, there exists $\lambda>0$ such that
$\|f\|_{\cH_{K_b}}\le \lambda \|f\|_{\cH_{K_a}}$ for all $f\in\cH_{K_a}$.  For any $k\in\bN$ with $a_k>0$, we still choose $c(n):=\delta_{n,k}$, $n\in\bN$ to get from $f_c\in\cH_{K_b}$ that $b_k>0$ and that
$$
\|f_c\|_{\cH_{K_b}}^2=\frac{a_k^2}{b_k^2}b_k\le \lambda \|f_c\|_{\cH_{K_a}}=\lambda a_k,
$$
which implies that $a_k\le \lambda b_k$. The proof is complete.
\end{proof}

Before we give examples of inclusion relations for Hilbert-Schmidt kernels by Proposition \ref{characterizeHilbertSchmidt}, we remark that Proposition \ref{characterizeHilbertSchmidt} actually leads to a characterization of Hilbert-Schmidt kernels.

\begin{theorem}\label{HilbertSchmidt}
Let $r$ be a function on $\bN$. Suppose that $\Phi(x)\in\ell^2_{|r|}(\bN)$ for all $x\in X$ and $\span\Phi(X)$ is dense in $\ell^2_{|r|}(\bN)$. Then
\begin{equation}\label{kernelkr}
K_r(x, y) := \sum_{n=1}^\infty r_n\phi_n(x)\overline{\phi_n(y)},\quad x,y\in X
\end{equation}
defines a kernel on $X$ if and only if $r_n \geq 0$ for each $n\in \bN$.
\end{theorem}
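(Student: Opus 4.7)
The plan is to prove both directions directly, with necessity occupying most of the work. For sufficiency, if every $r_n\ge 0$ then for any finite pairwise distinct $x_1,\ldots,x_N\in X$ and scalars $c_1,\ldots,c_N\in\bC$, one computes the quadratic form as
\[
\sum_{j,j'=1}^N c_j\overline{c_{j'}}K_r(x_j,x_{j'})=\sum_{n=1}^\infty r_n\Bigl|\sum_{j=1}^N c_j\phi_n(x_j)\Bigr|^2\ge 0,
\]
where the interchange of the finite sum over $j,j'$ and the infinite sum over $n$ is justified by the Cauchy--Schwarz bound $|K_r(x,y)|\le\|\Phi(x)\|_{\ell^2_{|r|}(\bN)}\|\Phi(y)\|_{\ell^2_{|r|}(\bN)}<+\infty$ that follows from the hypothesis $\Phi(x)\in\ell^2_{|r|}(\bN)$. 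Hermitian symmetry of the kernel matrix is clear.

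For necessity, I would argue by contradiction: assume $K_r$ is a kernel and that $r_k<0$ for some $k\in\bN$, so in particular $|r_k|>0$, and the standard basis vector $e_k\in\ell^2_{|r|}(\bN)$ is a nonzero element. By the density hypothesis, for every $\varepsilon>0$ there exist $N\in\bN$, scalars $c_1,\ldots,c_N\in\bC$ and points $x_1,\ldots,x_N\in X$ (which we may assume are pairwise distinct, by combining coefficients at coincident arguments) such that the vector $v:=\sum_{j=1}^N c_j\Phi(x_j)\in\ell^2_{|r|}(\bN)$ satisfies $\|v-e_k\|_{\ell^2_{|r|}(\bN)}<\varepsilon$. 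Expanding the norm,
\[
|r_k|\,|v(k)-1|^2+\sum_{n\ne k}|r_n|\,|v(n)|^2<\varepsilon^2,
\]
so $v(k)\to 1$ and $\sum_{n\ne k}|r_n|\,|v(n)|^2\to 0$ as $\varepsilon\to 0$.

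Now as in the sufficiency computation (the interchange is again valid because $v\in\ell^2_{|r|}(\bN)$ makes $\sum_n|r_n||v(n)|^2$ finite), one has
\[
\sum_{j,j'=1}^N c_j\overline{c_{j'}}K_r(x_j,x_{j'})=\sum_{n=1}^\infty r_n|v(n)|^2=r_k|v(k)|^2+\sum_{n\ne k}r_n|v(n)|^2.
\]
The first term tends to $r_k<0$ as $\varepsilon\to 0$, while the second is bounded in absolute value by $\sum_{n\ne k}|r_n||v(n)|^2<\varepsilon^2$. Hence for $\varepsilon$ small enough the left-hand side becomes strictly negative, contradicting that the kernel matrix of $K_r$ at the distinct inputs $x_1,\ldots,x_N$ is positive semi-definite.

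The main obstacle I expect is bookkeeping rather than a deep idea: one must (a) ensure the $x_j$'s can be taken distinct after collecting repeated arguments, (b) justify the Fubini-type interchange of the finite $(j,j')$-sum with the infinite $n$-sum using $v\in\ell^2_{|r|}(\bN)$, and (c) handle cleanly the convention that indices $n$ with $r_n=0$ do not appear in $\ell^2_{|r|}(\bN)$ (so they automatically contribute $0$ to $\sum_n r_n|v(n)|^2$ and can be ignored throughout).
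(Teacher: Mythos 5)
Your proof is correct, but it takes a genuinely different route from the paper's. The paper proves necessity by splitting $r$ as a difference $b-a$ of two nonnegative weights, with $a_{j_0}=-r_{j_0}>0$ and $b_{j_0}=0$ at the offending index $j_0$; the assumption that $K_r=K_b-K_a$ is a kernel then gives $K_a\ll K_b$, hence $\cH_{K_a}\subseteq\cH_{K_b}$, and Proposition \ref{characterizeHilbertSchmidt} forces $a_n\le\lambda b_n$ for all $n$, which fails at $n=j_0$. Your argument instead works directly with the quadratic form: you use the density of $\span\Phi(X)$ in $\ell^2_{|r|}(\bN)$ to approximate the basis vector $e_k$ (nonzero there precisely because $|r_k|>0$) by $v=\sum_j c_j\Phi(x_j)$, and show the quadratic form $\sum_n r_n|v(n)|^2$ converges to $r_k<0$, contradicting positive semi-definiteness. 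Both proofs hinge on the same density hypothesis, but yours is self-contained and more elementary (it does not invoke the inclusion characterization or the partial order $\ll$), at the cost of some bookkeeping — collecting repeated sample points, justifying the Fubini-type interchange via the Cauchy--Schwarz bound, and handling indices with $r_n=0$; the paper's proof is shorter given that Proposition \ref{characterizeHilbertSchmidt} is already in hand, and it also explains why the theorem is naturally a corollary of the inclusion theory. Your interchange of sums, the distinctness reduction, and the limiting estimate are all sound, so I see no gap.
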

\begin{proof}
The sufficiency is well-known. We prove the necessity by contradiction. Assume that $K_r$ given by (\ref{kernelkr})
is a kernel but $r_{j_0} < 0$ for some $j_0 \in \bN$. Then we introduce two nonnegative functions
$a$ and $b$ on $\bN$ by setting
\[a_n=
\left\{
\begin{array}{ll}
2|r_n|,& n \neq j_0,\\
-r_{j_0},& n = j_0.
 \end{array}\right.
\]
and
\[b_n=
\left\{
\begin{array}{ll}
2|r_n|+r_n,& n \neq j_0,\\
0,& n =  j_0.
 \end{array}\right.
\]
Then it is clear that $\Phi(x) \in \ell_a^2(\bN)$ and $\Phi(x) \in \ell_b^2(\bN)$ for all $x\in X$. Moreover, $\span\Phi(X)$ is dense in $\ell^2_a(\bN)$ and $\ell^2_b(\bN)$ as it is in $\ell^2_{|r|}(\bN)$. Therefore, $K_a$ and $K_b$ are Hilbert-Schmidt kernels on $X$. Note that $K_b-K_a=K_r$. By the assumption, $K_a\ll K_b$. Thus by
Proposition \ref{characterizeHilbertSchmidt}, there exists some $\lambda>0$ such that $a_n \leq \lambda b_n$ for all $n\in \bN$. Especially when
$n = j_0$, we have $-r_{j_0}\leq \lambda0=0 $, contradicting that $r_{j_0}<0$.
\end{proof}

As an application of the above theorem, we discuss an important and celebrated result which was proved before by rather sophisticated mathematical analysis \cite{SIJ2}. Suppose that the power series
$$
\sum_{n=0}^\infty a_n z^n,\ \ z\in\bC
$$
has a positive convergence radius $r$. Then by Corollary \ref{HilbertSchmidt} or \cite{SIJ2},
$$
K(x,y):=\sum_{n=0}^\infty a_n (x,y)^n, \ \ x,y\in\bR^d,\ \|x\|,\|y\|<r^{1/2}
$$
is a reproducing kernel on $\{x\in\bR^d:\|x\|<r^{1/2}\}$ if and only if $a_n\ge0$ for all $n\ge0$.

We close this section with a few examples that fall into the consideration of Proposition \ref{characterizeHilbertSchmidt}. We shall not state the results explicitly as they would just be repetition of those in Proposition \ref{characterizeHilbertSchmidt}.

\begin{itemize}
\item[--](Discrete Exponential Kernels) Let $t_n$, $n\in\bN$ be a sequence of pairwise distinct points in $\bR^d$ and let $a,b$ be two nonnegative functions in $\ell^1(\bN)$. The associated discrete exponential kernels are given by
$$
K_a(x, y): = \sum_{n=1}^\infty a_ne^{i(x - y, t_n)}, \quad K_b(x, y): = \sum_{n=1}^\infty b_ne^{i(x - y, t_n)},\ \ x,y\in\bR^d.
$$
Useful examples of discrete exponential kernels including the periodic kernels (see, for example, \cite{SS}, page 103). We present three instances below. Let $\gamma,\sigma$ be positive constants and $\alpha>d$. Define
$$
\tilde{G}_\gamma(x,y):=\sum_{n\in\bZ^d}e^{i(x-y,n)}e^{-\gamma \|n\|^2},\ \ x,y\in [0,2\pi]^d,
$$
$$
 \tilde{E}_\sigma(x,y):=\sum_{n\in\bZ^d}e^{i(x-y,n)}e^{-\sigma \|n\|},\ \ x,y\in [0,2\pi]^d,
 $$
 and
 $$
 \tilde{P}_q(x,y):=\sum_{n\in\bZ^d}e^{i(x-y,n)}\frac1{(1+\|n\|)^\alpha},\ \ x,y\in [0,2\pi]^d.
 $$
 Then by Proposition \ref{characterizeHilbertSchmidt}, we clearly have that $\cH_{\tilde{G}_\gamma}\subseteq \cH_{\tilde{E}_\sigma}\subseteq \cH_{\tilde{P}_q}$.

\item[--] (Polynomial Kernels) Let $a,b$ be two nonnegative functions on $\bN_+:=\bN\cup\{0\}$. Suppose that $\sum_{n=0}^\infty a_n z^n$ and $\sum_{n=0}^\infty b_nz^n$ both have a positive convergence radius $r_a$ and $r_b$, respectively. Then the polynomial kernels
$$
K_a(x, y): = \sum_{n = 0}^{\infty}a_n(x, y)^n,
$$
$$
K_b(x, y) := \sum_{n=0}^{\infty}b_n(x, y)^n,
$$
on the input space $\{x\in\bR^d:\|x\|<\min(\sqrt{r_a},\sqrt{r_b})\}$ satisfy the assumptions of Proposition \ref{characterizeHilbertSchmidt}.

\end{itemize}

Especially, we have the following simple observation about finite polynomial kernels.
\begin{prop}\label{finitepolynomial}
(Finite Polynomial Kernels) Let $p, q\in\bN$ and put
\begin{equation}\label{polynomialkernelp}
K_p(x, y):= (1 + (x, y))^p, \quad x, y\in \bR^d
\end{equation}
and
\begin{equation}\label{polynomialkernelq}
K_q(x, y) := (1 + (x, y))^q, \quad x, y\in \bR^d.
\end{equation}
Then $\cH_{K_p} \subseteq \cH_{K_q}$ if and only if $p\le q$. When $p\le q$, $\lambda(K_p,K_q)=1$.
\end{prop}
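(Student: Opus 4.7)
The plan is to expand both kernels via the binomial theorem and recognize them as polynomial kernels of the form $\sum_{n=0}^{N} a_n (x,y)^n$ already treated in the paper. Writing
$$
K_p(x,y) = \sum_{n=0}^p \binom{p}{n}(x,y)^n, \quad K_q(x,y) = \sum_{n=0}^q \binom{q}{n}(x,y)^n,
$$
and applying the multinomial expansion $(x,y)^n = \sum_{|\alpha|=n}\frac{n!}{\alpha!}x^\alpha y^\alpha$, one sees that both kernels fit the Hilbert-Schmidt framework of Proposition \ref{characterizeHilbertSchmidt} with feature map $\Phi(x)=(x^\alpha)_\alpha$ indexed by multi-indices. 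In particular, by Lemma \ref{featuremapconstruction}, $\cH_{K_p}$ coincides with the (finite-dimensional) space $\cP_p$ of polynomials on $\bR^d$ of total degree at most $p$, and similarly $\cH_{K_q}=\cP_q$. This immediately handles the set inclusion: $\cH_{K_p}\subseteq\cH_{K_q}$ if and only if $\cP_p\subseteq\cP_q$, i.e.\ $p\le q$.

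For the sufficient direction and the norm bound $\lambda(K_p,K_q)\le 1$, I would invoke Lemma \ref{Aronszajn} and exhibit $K_p\ll K_q$ directly. Assuming $p\le q$, regroup:
$$
K_q(x,y)-K_p(x,y)=\sum_{n=0}^p\Bigl[\binom{q}{n}-\binom{p}{n}\Bigr](x,y)^n+\sum_{n=p+1}^q\binom{q}{n}(x,y)^n.
$$
All bracketed coefficients are nonnegative, and each $(x,y)^n$ is a kernel on $\bR^d$ (by the multinomial expansion it is a nonnegative combination of the rank-one kernels $x^\alpha y^\alpha$). Hence $K_q-K_p$ is a kernel, giving $K_p\ll K_q$ and therefore $\lambda(K_p,K_q)\le 1$ by Lemma \ref{Aronszajn} and Proposition \ref{twoquantities}.

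For sharpness, observe that if $\lambda K_q-K_p$ is a kernel then its $1\times 1$ kernel matrix at $x=0$ must be nonnegative. But $(\lambda K_q-K_p)(0,0)=\lambda(1+0)^q-(1+0)^p=\lambda-1$, which forces $\lambda\ge 1$. Combined with the previous step this gives $\lambda(K_p,K_q)=1$. For the converse direction of the biconditional, $p>q$ would give $\cP_p\not\subseteq\cP_q$ since $\cP_q$ is strictly smaller, ruling out the set inclusion.

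The only mild subtlety — and the step I would write out most carefully — is the identification $\cH_{K_p}=\cP_p$ with inner product inherited from Lemma \ref{featuremapconstruction}; this requires noting that the (finite-dimensional) span of $\{\Phi(x):x\in\bR^d\}$ in the feature space indexed by $\{\alpha:|\alpha|\le p\}$ is the full feature space, which follows because the monomial evaluation matrix at sufficiently many generic points in $\bR^d$ has full rank. Everything else is a clean application of Lemma \ref{Aronszajn} together with the binomial identity above.
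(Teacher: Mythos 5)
Your proof is correct and follows essentially the route the paper intends: the proposition is stated there as a ``simple observation'' with no written proof, immediately after the remark that polynomial kernels $\sum_n a_n(x,y)^n$ fall under Proposition \ref{characterizeHilbertSchmidt}, so the intended argument is exactly your binomial expansion with $a_n=\binom{p}{n}\le\binom{q}{n}=b_n$ (and $a_n>0=b_n$ for $q<n\le p$ when $p>q$). Your direct verification that $K_q-K_p$ is a kernel and the evaluation of the $1\times 1$ kernel matrix at $x=0$ to force $\lambda\ge 1$ are a clean, self-contained substitute for the formula $\lambda(K_a,K_b)=\sup\{a_n/b_n\}$ from that proposition, and all the steps check out.
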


\section{Constructional Results}
\setcounter{equation}{0}

In this section, we discuss the preservation of the inclusion relation of RKHS under various operations with the corresponding kernels. We start with some trivial observations from Lemma \ref{Aronszajn}.

\begin{prop}\label{trivial}
Let $K_1,K_2,G_1,G_2,K,G$ be reproducing kernels on the input space $X$. Then the following results hold true:
\begin{enumerate}[i.)]
\item If $\cH_{K_1} \subseteq \cH_{G_1}$ and $\cH_{K_2} \subseteq \cH_{G_2}$ then $\cH_{K_1+K_2}\subseteq \cH_{G_1+G_2}$ and
    $$
    \lambda(K_1+K_2,G_1+G_2)\le \max(\lambda(K_1,G_1),\lambda(K_2,G_2)).
    $$
\item Especially, if $\cH_{K_1}$ and $\cH_{K_2}$ are both contained in $\cH_G$ then $\cH_{K_1+K_2}\subseteq\cH_G$ and
    $$
    \lambda(K_1+K_2,G)\le \lambda(K_1,G)+\lambda(K_2,G).
    $$
\item If $\cH_K\subseteq\cH_G$ then for all $a,b>0$, $\cH_{aK}\subseteq\cH_{bG}$ and
    $$
    \lambda(aK,bG)=\frac ab\lambda(K,G).
    $$
\end{enumerate}
\end{prop}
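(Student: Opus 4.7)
The plan is to reduce all three claims to the characterization in Lemma \ref{Aronszajn} ($\cH_K\subseteq\cH_G$ iff $K\ll \lambda G$ for some $\lambda\ge 0$), coupled with the fact from Proposition \ref{twoquantities} that the infimum defining $\lambda(K,G)$ is actually attained, so that $K\ll \lambda(K,G)\,G$ on the nose. The only other ingredient needed is the elementary observation that the set of reproducing kernels on $X$ is closed under addition and under multiplication by a nonnegative scalar (the kernel matrices are hermitian positive semi-definite, and these classes of matrices are closed under such operations).

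For part (i), set $\lambda_j:=\lambda(K_j,G_j)$ for $j=1,2$ and let $\lambda:=\max(\lambda_1,\lambda_2)$. By Proposition \ref{twoquantities}, $\lambda_j G_j-K_j$ is a kernel; adding to it the kernel $(\lambda-\lambda_j)G_j$ shows that $\lambda G_j-K_j$ is a kernel for each $j$. Summing the two resulting ``kernel inequalities'' yields that $\lambda(G_1+G_2)-(K_1+K_2)$ is a kernel, i.e.\ $K_1+K_2\ll \lambda(G_1+G_2)$, and Lemma \ref{Aronszajn} delivers both the inclusion and the bound $\lambda(K_1+K_2,G_1+G_2)\le \lambda$. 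Part (ii) is the same argument with $G_1=G_2=G$, except that one simply adds the two inequalities $\lambda_j G-K_j\gg 0$ directly (without taking a maximum) to obtain $(\lambda_1+\lambda_2)G-(K_1+K_2)\gg 0$, giving $\lambda(K_1+K_2,G)\le \lambda_1+\lambda_2$.

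Part (iii) requires one additional observation: from Lemma \ref{featuremapconstruction} (or the passage in the proof of Proposition \ref{twoquantities} where this was used) one knows that $\cH_{aK}$ and $\cH_K$ consist of the same functions, with $\|f\|_{\cH_{aK}}=\|f\|_{\cH_K}/\sqrt{a}$; consequently $\cH_K\subseteq\cH_G$ trivially forces $\cH_{aK}\subseteq\cH_{bG}$. For the quantitative identity, $aK\ll \mu(bG)$ is equivalent to $(\mu b/a)G-K$ being a kernel, i.e.\ to $\mu b/a\ge \lambda(K,G)$; taking the infimum over admissible $\mu$ gives $\lambda(aK,bG)=(a/b)\lambda(K,G)$. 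There is no genuine obstacle in any of the three parts — once Lemma \ref{Aronszajn} and the attainment statement from Proposition \ref{twoquantities} are in hand, the proposition is essentially a bookkeeping exercise. The only mild subtlety worth flagging is precisely the use of that attainment statement; without it the bounds in (i) and (ii) would have to be stated as $\le\max(\lambda_1,\lambda_2)+\varepsilon$ or $\le\lambda_1+\lambda_2+\varepsilon$ for every $\varepsilon>0$, which is logically equivalent but less clean.
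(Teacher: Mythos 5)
Your proof is correct and follows exactly the route the paper intends: the paper offers no written proof, describing these as ``trivial observations from Lemma \ref{Aronszajn}'', and your argument is precisely that reduction, using the attainment $K\ll\lambda(K,G)\,G$ from Proposition \ref{twoquantities} together with closure of the kernel class under sums and nonnegative scalar multiples. All three parts, including the equality in (iii) via the rescaling of norms on $\cH_{aK}$, check out.
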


We next turn to the product of two kernels by first examining the more general tensor product of kernels. Let $K,G$ be two kernels on $X$. The tensor product $K\otimes G$ of $K,G$ is a new kernel on the extended input space $X\times X$ defined by
$$
(K\otimes G)(\bx,\by):=K(\bx_1,\by_1)G(\bx_2,\by_2),\ \ \bx=(\bx_1,\bx_2),\ \by=(\by_1,\by_2)\in X\times X.
$$
For further discussion, we shall make use of the Schur product theorem \cite{HJ}. For two square matrices $A,B$ of the same size, we denote by $A\circ B$ the Hadamard product of $A,B$, that is, $A\circ B$ is formed by pairwise multiplying elements from $A$ and $B$. The Schur product theorem asserts that the Hardmard product of two positive semi-definite matrices is still positive semi-definite.


\begin{prop}\label{tensorproduct}
Let $K_1,K_2,G_1,G_2$ be kernels on $X$. If $\cH_{K_1} \subseteq \cH_{G_1}$ and $\cH_{K_2} \subseteq \cH_{G_2}$ then $\cH_{K_1\otimes K_2} \subseteq \cH_{G_1\otimes G_2}$ and
$$
\lambda(K_1\otimes K_2,G_1\otimes G_2)\le\lambda(K_1,G_1)\lambda(K_2,G_2).
$$
\end{prop}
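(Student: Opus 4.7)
The plan is to reduce the statement to Lemma \ref{Aronszajn} via a short telescoping identity and the Schur product theorem that the paper has just recalled. Set $\lambda_1 := \lambda(K_1, G_1)$ and $\lambda_2 := \lambda(K_2, G_2)$. By Proposition \ref{twoquantities}, $K_1 \ll \lambda_1 G_1$ and $K_2 \ll \lambda_2 G_2$, so that $L_1 := \lambda_1 G_1 - K_1$ and $L_2 := \lambda_2 G_2 - K_2$ are themselves reproducing kernels on $X$.

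The key observation I would exploit is the pointwise identity
\begin{equation*}
\lambda_1 \lambda_2 \, G_1 \otimes G_2 \;-\; K_1 \otimes K_2 \;=\; L_1 \otimes (\lambda_2 G_2) \;+\; K_1 \otimes L_2,
\end{equation*}
which is immediate from $\lambda_1\lambda_2 G_1 G_2 - K_1 K_2 = (\lambda_1 G_1 - K_1)(\lambda_2 G_2) + K_1(\lambda_2 G_2 - K_2)$, evaluated at an arbitrary pair $(\bx, \by) \in (X\times X)\times (X\times X)$ with $\bx = (\bx_1, \bx_2)$ and $\by = (\by_1, \by_2)$.

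It remains to show that each summand on the right is a kernel on $X \times X$. For any finite collection of points $\{(x_j, y_j) : j \in \bN_n\} \subseteq X \times X$, the kernel matrix of a tensor product $A \otimes B$ at these points equals the Hadamard product of the matrices $[A(x_j,x_k)]_{j,k \in \bN_n}$ and $[B(y_j,y_k)]_{j,k \in \bN_n}$. As both factors are hermitian and positive semi-definite, the Schur product theorem recalled just before the proposition implies that $A \otimes B$ is a kernel on $X \times X$ whenever $A$ and $B$ are kernels on $X$. Applying this to the pairs $(L_1, \lambda_2 G_2)$ and $(K_1, L_2)$, each summand is a kernel on $X \times X$; being a positive linear combination of kernels, the right-hand side is a kernel on $X \times X$.

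Consequently $K_1 \otimes K_2 \ll \lambda_1 \lambda_2 \, G_1 \otimes G_2$, and Lemma \ref{Aronszajn} yields $\cH_{K_1 \otimes K_2} \subseteq \cH_{G_1 \otimes G_2}$ together with the bound $\lambda(K_1 \otimes K_2, G_1 \otimes G_2) \le \lambda_1 \lambda_2$ from the definition of the infimum. I anticipate no genuine obstacle here: the telescoping identity is the entire idea, and the Schur product theorem supplies the required positivity; no feature-map manipulations are needed.
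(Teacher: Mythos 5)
Your proof is correct and follows essentially the same route as the paper: both reduce the claim to showing $K_1\otimes K_2\ll \lambda_1\lambda_2\, G_1\otimes G_2$ by writing the difference as a sum of tensor products of kernels and invoking the Schur product theorem on the resulting Hadamard products of kernel matrices. The only (immaterial) difference is that you use the two-term telescoping $L_1\otimes(\lambda_2 G_2)+K_1\otimes L_2$ where the paper expands into three terms $K_1\otimes L_2+L_1\otimes K_2+L_1\otimes L_2$.
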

\begin{proof} For notational simplicity, put $\lambda_1:=\lambda(K_1,G_1)$ and $\lambda_2:=\lambda(K_2,G_2)$. We shall show that $K_1\otimes K_2\ll \lambda_1\lambda_2 G_1\otimes G_2$ by definition. Let $\bz:=\{\bx^j:j\in\bN_n\}$ be a finite set of pairwise distinct points in $X\times X$. Set $\bz_1:=\{\bx^j_1:j\in\bN_n\}$ and $\bz_2:=\{\bx^j_2:j\in\bN_n\}$. We observe that
$$
(G_1\otimes G_2)[\bz]=G_1[\bz_1]\circ G_2[\bz_2],\ (K_1\otimes K_2)[\bz]=K_1[\bz_1]\circ K_2[\bz_2].
$$
By Proposition \ref{twoquantities}, $K_1\ll \lambda_1 G_1$ and $K_2\ll \lambda_2 G_2$. As a result, $\lambda_1G_1[\bz_1]-K_1[\bz_1]$ and $\lambda_2G_2[\bz_2]-K_2[\bz_2]$ are both positive semi-definite. We now compute that
$$
\begin{array}{l}
 \lambda_1\lambda_2 (G_1\otimes G_2)[\bz]-(K_1\otimes K_2)[\bz]=\lambda_1\lambda_2G_1[\bz_1]\circ G_2[\bz_2]-K_1[\bz_1]\circ K_2[\bz_2]\\
\quad=( K_1[\bz_1]+(\lambda_1G_1[\bz_1]-K_1[\bz_1]))\circ ( K_2[\bz_2]+(\lambda_2G_2[\bz_2]-K_2[\bz_2]))-K_1[\bz_1]\circ K_2[\bz_2]\\
\quad=K_1[\bz_1]\circ(\lambda_2G_2[\bz_2]-K_2[\bz_2])+(\lambda_1G_1[\bz_1]-K_1[\bz_1])\circ K_2[\bz_2]\\
\quad\quad +(\lambda_1G_1[\bz_1]-K_1[\bz_1])\circ (\lambda_2G_2[\bz_2]-K_2[\bz_2]).
\end{array}
$$
By the Schur product theorem, the three matrices in the last step above are all positive semi-definite. Therefore, $K_1\otimes K_2\ll \lambda_1\lambda_2 G_1\otimes G_2$. The proof is complete.
\end{proof}

\begin{coro}\label{product}
Let $K_1,K_2,G_1,G_2$ be kernels on $X$. If $\cH_{K_1} \subseteq \cH_{G_1}$ and $\cH_{K_2} \subseteq \cH_{G_2}$ then $\cH_{K_1K_2} \subseteq \cH_{G_1G_2}$ and $\lambda(K_1 K_2,G_1G_2)\le\lambda(K_1,G_1)\lambda(K_2,G_2)$.
\end{coro}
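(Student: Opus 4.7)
The plan is to deduce the corollary from Proposition \ref{tensorproduct} by restricting the tensor product kernel to the diagonal of $X\times X$. More precisely, I would introduce the diagonal embedding $\Delta:X\to X\times X$ defined by $\Delta(x):=(x,x)$. The key algebraic observation is that for any kernels $K,G$ on $X$,
$$
(K\otimes G)(\Delta(x),\Delta(y))=K(x,y)\,G(x,y),\quad x,y\in X,
$$
so the pointwise product $KG$ is precisely the pullback of $K\otimes G$ along $\Delta$.

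Set $\lambda_1:=\lambda(K_1,G_1)$ and $\lambda_2:=\lambda(K_2,G_2)$. By Proposition \ref{tensorproduct} applied to the assumptions $\cH_{K_1}\subseteq\cH_{G_1}$ and $\cH_{K_2}\subseteq\cH_{G_2}$, we have $K_1\otimes K_2\ll \lambda_1\lambda_2\,G_1\otimes G_2$, i.e.\ the function $\lambda_1\lambda_2(G_1\otimes G_2)-K_1\otimes K_2$ is a kernel on $X\times X$. Now for any finite pairwise distinct $\bx=\{x_j:j\in\bN_n\}\subseteq X$, the points $\bz:=\{(x_j,x_j):j\in\bN_n\}\subseteq X\times X$ are pairwise distinct, and the $n\times n$ matrix obtained by evaluating $\lambda_1\lambda_2(G_1\otimes G_2)-K_1\otimes K_2$ on $\bz$ coincides with $\lambda_1\lambda_2(G_1G_2)[\bx]-(K_1K_2)[\bx]$ by the identity above. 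Hence this matrix is hermitian and positive semi-definite, which shows that
$$
K_1K_2\ll \lambda_1\lambda_2\,G_1G_2
$$
as kernels on $X$.

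By Lemma \ref{Aronszajn} this already yields $\cH_{K_1K_2}\subseteq\cH_{G_1G_2}$, and by the definition of $\lambda(\cdot,\cdot)$ together with Proposition \ref{twoquantities} we obtain the quantitative bound $\lambda(K_1K_2,G_1G_2)\le \lambda_1\lambda_2=\lambda(K_1,G_1)\lambda(K_2,G_2)$. There is essentially no obstacle here: the entire content is the diagonal restriction, and the only thing to be slightly careful about is that restricting a kernel on $X\times X$ to pairs of the form $(\Delta(x_j),\Delta(x_k))$ automatically preserves positive semi-definiteness of the associated matrix, which is the reason the Schur product theorem already did the real work inside Proposition \ref{tensorproduct}.
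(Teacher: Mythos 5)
Your proposal is correct and follows exactly the paper's route: the paper's proof is the one-line observation that $K_1K_2$ and $G_1G_2$ are the restrictions of $K_1\otimes K_2$ and $G_1\otimes G_2$ to the diagonal of $X\times X$, combined with Proposition \ref{tensorproduct}. You have simply written out the details of that diagonal restriction, and they check out.
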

\begin{proof}
The result follows from Proposition \ref{tensorproduct} and the observation that $K_1K_2$ and $G_1G_2$ can be viewed as the restriction of $K_1\otimes K_2$ and $G_1\otimes G_2$ on the diagonal of $X\times X$, respectively.
\end{proof}

We next discuss limits of reproducing kernels. It is obvious by definition that the limit of a sequence of kernels remains a kernel \cite{Aronszajn}.

\begin{prop}\label{limit}
Let $\{K_j:j\in\bN\}$ and $\{G_j:j\in\bN\}$ be two sequences of kernels on $X$ that converge pointwise to kernels $K$ and $G$, respectively. If $\cH_{K_j}\subseteq\cH_{G_j}$ for all $j\in\bN$ and
\begin{equation}\label{limitcondition}
 \sup\{\lambda(K_j,G_j):j\in\bN\}<+\infty
\end{equation}
then $\cH_{K} \subseteq \cH_{G}$ and $\lambda(K,G)\le \sup\{\lambda(K_j,G_j):j\in\bN\}$.
\end{prop}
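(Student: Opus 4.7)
The plan is to reduce the statement to showing that $\Lambda G - K$ is a kernel, where $\Lambda := \sup_j \lambda(K_j, G_j)$, and then to invoke Lemma \ref{Aronszajn}. The two ingredients needed are a uniform kernel-domination $K_j \ll \Lambda G_j$ for all $j$, and the standard fact (already noted just before the proposition) that the pointwise limit of a sequence of kernels is again a kernel.

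First I would set $\Lambda := \sup\{\lambda(K_j,G_j) : j \in \bN\}$, which is finite by hypothesis \eqref{limitcondition}. By Proposition \ref{twoquantities}, for each $j \in \bN$ we have $K_j \ll \lambda(K_j, G_j)\, G_j$, i.e., $\lambda(K_j,G_j) G_j - K_j$ is a kernel on $X$. Writing
$$
\Lambda G_j - K_j = \bigl(\Lambda - \lambda(K_j,G_j)\bigr) G_j + \bigl(\lambda(K_j,G_j) G_j - K_j\bigr),
$$
and noting $\Lambda - \lambda(K_j, G_j) \ge 0$, the right-hand side is a sum of two kernels and therefore a kernel. Hence $K_j \ll \Lambda G_j$ for every $j$.

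Next I would pass to the limit. For any finite pairwise distinct inputs $\bx = \{x_k : k \in \bN_n\} \subseteq X$, the matrices $\bigl(\Lambda G_j - K_j\bigr)[\bx]$ are hermitian and positive semi-definite for all $j$, and they converge entrywise to $\bigl(\Lambda G - K\bigr)[\bx]$ by the pointwise convergence of $K_j \to K$ and $G_j \to G$. Since the set of hermitian positive semi-definite matrices of a given size is closed, the limit matrix is hermitian and positive semi-definite as well. Therefore $\Lambda G - K$ is a kernel on $X$, i.e., $K \ll \Lambda G$. By Lemma \ref{Aronszajn}, $\cH_K \subseteq \cH_G$, and by the definition of $\lambda(K,G)$ we obtain $\lambda(K,G) \le \Lambda = \sup\{\lambda(K_j,G_j) : j \in \bN\}$.

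There is no real obstacle here; the whole argument is a soft limiting step. The one point that needs a sentence of care is the passage from $K_j \ll \lambda(K_j,G_j) G_j$ to the uniform relation $K_j \ll \Lambda G_j$ — it would be tempting to take the limit in the non-uniform statement directly, but then the multipliers $\lambda(K_j,G_j)$ depend on $j$, and the pointwise limit of $\lambda(K_j,G_j) G_j - K_j$ need not be $\Lambda G - K$. Replacing each multiplier with the uniform bound $\Lambda$ at the outset sidesteps this issue and makes the closedness of the positive semi-definite cone do all the work.
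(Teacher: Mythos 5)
Your proof is correct and follows essentially the same route as the paper: establish the uniform domination $K_j \ll \Lambda G_j$ (which the paper asserts directly, while you justify it by decomposing $\Lambda G_j - K_j$ as a sum of two kernels) and then pass to the limit in the positive semi-definiteness of the kernel matrices, concluding via Lemma \ref{Aronszajn}. The extra sentence you add about why the multiplier must be made uniform before taking limits is a sound clarification of a step the paper leaves implicit.
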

\begin{proof}
Suppose that $\cH_{K_j}\subseteq\cH_{G_j}$ for all $j\in\bN$ and $\lambda:=\sup\{\lambda(K_j,G_j):j\in\bN\}<+\infty$. Let $\bx$ be a finite set of sampling points in $X$ and $y\in\bC^n$ be fixed. Then as $K_j\ll \lambda G_j$, we have for all $j\in\bN$ that
$$
y^*(\lambda G_j[\bx] - K_j[\bx])y \geq 0.
$$
Taking the limit as $j\to\infty$, we get that
$$
y^*(\lambda G[\bx] - K[\bx])y \geq 0.
$$
The proof is hence complete.
\end{proof}

We remark that condition (\ref{limitcondition}) may not be removed in the last proposition. For a simple contradictory example, we let $G$ be an arbitrary nontrivial kernel on $X$ and set $K_j:=\frac 1j G$ and $G_j:=G$ for all $j\in\bN$. It is cleat that $\cH_{K_j}=\cH_{G_j}=\cH_G$ for each $j\in\bN$. But the limit of $K_j$ is the trivial kernel. The inclusion relation is hence not kept in the limit kernels. The reason is that $\lambda(K_j,G_j)=j$ is unbounded.

With the help of Propositions \ref{trivial}, \ref{limit} and Corollary \ref{product}, we are ready to give a main result of this section. We shall use a fact proved in \cite{FMP} that if $K$ is a kernel and $\phi$ is analytic with nonnegative Taylor coefficients at the origin then $\phi(K)$ remains a kernel.

\begin{theorem}\label{exponentialcomposition}
Let $K$ and $G$ be two kernels on $X$ with $\cH_K\subseteq \cH_G$. Then $\cH_{e^K}\subseteq\cH_{e^{\lambda(K,G)G}}$. In particular, if $\lambda(K,G)\le 1$ then $\cH_{e^K}\subseteq\cH_{e^G}$.
\end{theorem}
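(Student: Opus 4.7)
The plan is to reduce the exponential statement to the known preservation results for products and sums of kernels, via the Taylor expansion $e^K = \sum_{n=0}^\infty K^n/n!$ (and similarly for $e^{\lambda G}$), each of which is already a valid kernel by the Fitzgerald--Micchelli--Pinkus fact cited just before the theorem. Throughout, write $\lambda := \lambda(K,G)$, which by Proposition \ref{twoquantities} already satisfies $K \ll \lambda G$.

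The first step is to upgrade the bound $K \ll \lambda G$ from the first power to every power. Applying Corollary \ref{product} inductively, with $K_1 = K_2 = \cdots = K$ and $G_1 = G_2 = \cdots = G$, gives $\cH_{K^n} \subseteq \cH_{G^n}$ with $\lambda(K^n, G^n) \le \lambda^n$, so by Proposition \ref{twoquantities} (applied at each level) we get $K^n \ll \lambda^n G^n$ for every $n \in \bN_+$. Dividing by $n!$, which is a positive constant, preserves the kernel property, so $\lambda^n G^n/n! - K^n/n!$ is a kernel for each $n$.

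The second step is to sum these differences. By Proposition \ref{trivial} (i), any finite partial sum $\sum_{n=0}^N (\lambda^n G^n - K^n)/n!$ is a kernel. Letting $N \to \infty$, the sum converges pointwise to $e^{\lambda G} - e^K$ because both $e^K$ and $e^{\lambda G}$ are well-defined kernels (given by absolutely convergent series pointwise). By Proposition \ref{limit} (or the elementary observation, used already in the paper, that a pointwise limit of kernels is a kernel), $e^{\lambda G} - e^K$ is itself a kernel, i.e.\ $e^K \ll e^{\lambda G}$. Lemma \ref{Aronszajn} then yields $\cH_{e^K} \subseteq \cH_{e^{\lambda(K,G) G}}$.

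For the addendum under the hypothesis $\lambda \le 1$: since $\lambda^n \le 1$, the kernel $G^n - \lambda^n G^n = (1-\lambda^n) G^n$ is a nonnegative scalar multiple of a kernel, hence a kernel, and adding it to $\lambda^n G^n - K^n$ gives that $G^n - K^n$ is a kernel for every $n$. Repeating the summation argument of the previous paragraph with $\lambda$ replaced by $1$ shows $e^G - e^K$ is a kernel, hence $\cH_{e^K} \subseteq \cH_{e^G}$. The only step that requires care is justifying the passage to the limit, but since the partial sums are kernels and they converge pointwise to a well-defined function (which is the difference of two genuine kernels), the limit result applies directly; I do not anticipate a serious obstacle here.
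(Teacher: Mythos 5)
Your proof is correct and follows essentially the same route as the paper: expand the exponentials as power series, propagate $K\ll\lambda G$ to $K^n\ll\lambda^n G^n$ via Corollary \ref{product}, sum the partial sums, and pass to the pointwise limit. The only cosmetic difference is that you track the differences $(\lambda^n G^n-K^n)/n!$ directly and conclude $e^K\ll e^{\lambda(K,G)G}$, whereas the paper first normalizes to $\lambda(K,G)\le 1$ and invokes Proposition \ref{limit} on the pair of partial-sum sequences; both justifications of the limit step are valid.
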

\begin{proof}
We may assume that $\lambda(K,G)\le 1$. Let $K_n: = \sum_{j = 0}^n\frac{K^j}{j!}$ and $G_n: = \sum_{j = 0}^n\frac{G^j}{j!}$ for each $n\in\bN$. Then $K_n,G_n$ converge pointwise to $e^K$ and $e^G$, respectively. It also follows from
Proposition \ref{trivial} and Corollary \ref{product} that
$$
K_n \ll \left(\max_{0\leq j\leq n}\lambda(K,G)^{j}\right)G_n.
$$
It is clear that $\max_{0\leq j\leq n}\lambda(K,G)^{j}$, $n\in\bN$ are bounded by $1$. The result now follows immediately from Proposition \ref{limit}.
\end{proof}

The arguments used in the above proof in fact are able to prove a more general result, which we present below.

\begin{prop}
Let $K$ and $G$ be two kernels on $X$ with $\cH_K\subseteq \cH_G$. Suppose that $\phi$ is an analytic function with nonnegative Taylor coefficients $a_j$, $j\ge 0$ at the origin. Then $\cH_{\phi(K)}\subseteq \cH_{\phi(\lambda(K,G)G)}$. If, in addition, $\lambda(K,G)\le 1$, then $\cH_{\phi(K)}\subseteq \cH_{\phi(G)}$.
\end{prop}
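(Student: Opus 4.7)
The plan is to mirror the proof of Theorem \ref{exponentialcomposition} by truncating the Taylor series of $\phi$ and invoking the three preservation principles (sum, product, limit) already established in this section. Write $\lambda := \lambda(K,G)$ for brevity, and form the partial sums
$$
K_n := \sum_{j=0}^{n} a_j K^j, \qquad G_n := \sum_{j=0}^{n} a_j (\lambda G)^j,
$$
which converge pointwise on $X\times X$ to $\phi(K)$ and $\phi(\lambda G)$, respectively.

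The core step is to show that $\lambda(K_n, G_n) \le 1$ for every $n$. Starting from $\lambda(K,G) = \lambda$ and iterating Corollary \ref{product} $j$ times with $K_1 = K_2 = \cdots = K$ and $G_1 = G_2 = \cdots = G$, I obtain $\lambda(K^j, G^j) \le \lambda^j$ for each $j \ge 1$; the $j=0$ case reduces to the constant kernel $1$ and trivially gives $\lambda(K^0, G^0) = 1$. Applying Proposition \ref{trivial}(iii) to rescale $G^j$ to $\lambda^j G^j$ yields
$$
\lambda(K^j, \lambda^j G^j) = \lambda^{-j}\, \lambda(K^j, G^j) \le 1,
$$
and the scalar $a_j > 0$ cancels, giving $\lambda(a_j K^j,\, a_j \lambda^j G^j) \le 1$ (the $a_j = 0$ terms simply drop out). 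Now iterate Proposition \ref{trivial}(i) in $j$ to conclude
$$
\lambda(K_n, G_n) \le \max_{\substack{0 \le j \le n \\ a_j > 0}} \lambda\bigl(a_j K^j,\, a_j \lambda^j G^j\bigr) \le 1,
$$
uniformly in $n$. Proposition \ref{limit} then applies to the pair of sequences $K_n \to \phi(K)$ and $G_n \to \phi(\lambda G)$ and delivers $\cH_{\phi(K)} \subseteq \cH_{\phi(\lambda(K,G)G)}$ with $\lambda(\phi(K), \phi(\lambda G)) \le 1$, proving the first assertion.

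For the addendum, assume additionally $\lambda \le 1$. Then $G - \lambda G = (1-\lambda) G$ is still a kernel, so $\lambda G \ll G$, i.e.\ $\cH_{\lambda G} \subseteq \cH_G$ with $\lambda(\lambda G, G) = \lambda \le 1$. Feeding the pair $(\lambda G, G)$ into the first part of the theorem gives $\cH_{\phi(\lambda G)} \subseteq \cH_{\phi(G)}$, and composing with the inclusion just established yields $\cH_{\phi(K)} \subseteq \cH_{\phi(G)}$.

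The main obstacle is the bookkeeping in the middle step: one must choose the right rescaling of $G$ so that the constants $\lambda(K^j, G^j) \le \lambda^j$ from the iterated Schur-product argument are exactly absorbed and the resulting bounds on $\lambda(K_n, G_n)$ do not blow up with $n$. Replacing $G$ by $\lambda(K,G) G$ in the target is the precise choice that makes the factor $\lambda^j$ cancel, so the uniform bound needed by Proposition \ref{limit} comes out to the clean value $1$; everything else in the argument is a routine application of the preservation results already proved.
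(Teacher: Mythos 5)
Your proof of the main assertion $\cH_{\phi(K)}\subseteq\cH_{\phi(\lambda(K,G)G)}$ is correct and is essentially the paper's argument: the paper reduces to the case $\lambda(K,G)\le 1$ by passing from $G$ to $\lambda(K,G)G$ and then runs the same truncate--bound--pass-to-the-limit scheme via Proposition \ref{trivial}, Corollary \ref{product} and Proposition \ref{limit}, whereas you keep the rescaling explicit inside $G_n$. The bookkeeping $\lambda(K^j,\lambda^jG^j)\le 1$ and the resulting uniform bound $\lambda(K_n,G_n)\le 1$ are exactly what Proposition \ref{limit} requires, so the first conclusion stands.

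The derivation of the addendum, however, fails as written. Applying the first assertion to the pair $(\lambda G,\,G)$ yields $\cH_{\phi(\lambda G)}\subseteq\cH_{\phi(\lambda(\lambda G,G)\,G)}$, and since $\lambda(\lambda G,G)=\lambda$ this is the tautology $\cH_{\phi(\lambda G)}\subseteq\cH_{\phi(\lambda G)}$, not the inclusion $\cH_{\phi(\lambda G)}\subseteq\cH_{\phi(G)}$ that your composition step needs; extracting the latter from ``the first part'' would require precisely the addendum you are trying to prove. The gap is easy to close. When $\lambda\le1$ one has $\lambda^jG^j\ll G^j$ for every $j\ge0$, hence $\sum_{j=0}^n a_j\lambda^jG^j\ll\sum_{j=0}^n a_jG^j$ by Proposition \ref{trivial}, and Proposition \ref{limit} then gives $\phi(\lambda G)\ll\phi(G)$, so that $\cH_{\phi(\lambda G)}\subseteq\cH_{\phi(G)}$ by Lemma \ref{Aronszajn}. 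Alternatively, and closer to what the paper does in Theorem \ref{exponentialcomposition}, run your truncation argument directly on the pair $(K,G)$ with $G_n:=\sum_{j=0}^n a_jG^j$: then $\lambda(K^j,G^j)\le\lambda^j\le1$, so $\lambda(K_n,G_n)\le1$ uniformly in $n$ and Proposition \ref{limit} yields $\cH_{\phi(K)}\subseteq\cH_{\phi(G)}$ in one step.
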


\section{Equivalent Norm Inclusion}
\setcounter{equation}{0}

In this section, we investigate a special inclusion relation where an equivalence on the norms on the smaller space is imposed. Specifically, for two kernels $K,G$ on $X$, we denote by $\cH_K\lesssim \cH_G$ if $\cH_K\subseteq\cH_G$ and there exists positive constants $\alpha,\beta$ such that
\begin{equation}\label{equivalentnormdef}
\alpha\|f\|_{\cH_K}\le \|f\|_{\cH_G}\le\beta \|f\|_{\cH_K}\mbox{ for all }f\in\cH_K.
\end{equation}
For an existing kernel $K$, we call a kernel $G$ a {\it weak refinement} of $K$ if $\cH_K\lesssim \cH_G$. This is a relaxation of the refinement kernel defined in \cite{XZ2} and is expected to accommodate more examples of reproducing kernels.

We start our investigation with a characterization of the equivalent norm inclusion relation. The following result from \cite{Aronszajn} is needed.

\begin{lemma}\label{kernelsum}
Let $K$ and $G$ be kernels on $X$. Then there holds for all $f\in \cH_{K + G}$ that
$$
\|f\|_{\cH_{K+G}}^2 = \min \{\|f_1\|_{\cH_K}^2 + \|f_2\|_{\cH_G}^2:\  f=f_1+f_2,\  f_1 \in \cH_K,\ f_2 \in{\cH_G} \}.
$$
\end{lemma}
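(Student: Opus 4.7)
The plan is to realise the sum $K+G$ as a kernel with a natural direct-sum feature map and then invoke Lemma \ref{featuremapconstruction}. Using the canonical feature maps $\Phi_1(x):=K(x,\cdot)\in\cH_K$ and $\Phi_2(x):=G(x,\cdot)\in\cH_G$ (each with dense span in its feature space), I set $\cW:=\cH_K\oplus\cH_G$ with the direct-sum inner product and define $\Phi(x):=(\Phi_1(x),\Phi_2(x))$. The computation
$$
(\Phi(x),\Phi(y))_\cW = K(x,y)+G(x,y)
$$
shows that $\Phi$ is a feature map for $K+G$, so Lemma \ref{featuremapconstruction} describes $\cH_{K+G}$ explicitly.

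For the bound $\|f\|^2_{\cH_{K+G}}\le\|f_1\|^2_{\cH_K}+\|f_2\|^2_{\cH_G}$ applied to any decomposition $f=f_1+f_2$ with $f_1\in\cH_K$ and $f_2\in\cH_G$, I use the reproducing property to write $f_1(x)=(\Phi_1(x),f_1)_{\cH_K}$ and $f_2(x)=(\Phi_2(x),f_2)_{\cH_G}$, so that $f(x)=(\Phi(x),(f_1,f_2))_\cW$. This places $f$ in $\cH_{K+G}$, and the norm formula (\ref{innerproductbyfeaturemap}) gives
$$
\|f\|^2_{\cH_{K+G}} = \|P_\Phi(f_1,f_2)\|^2_\cW \le \|(f_1,f_2)\|^2_\cW = \|f_1\|^2_{\cH_K}+\|f_2\|^2_{\cH_G}.
$$
Taking the infimum over admissible decompositions gives one half of the claimed identity.

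For the reverse inequality and for attainment of the minimum, I would start with an arbitrary $f\in\cH_{K+G}$. Lemma \ref{featuremapconstruction} provides some $u=(u_1,u_2)\in\cW$ with $f(x)=(\Phi(x),u)_\cW$ and $\|f\|^2_{\cH_{K+G}}=\|P_\Phi u\|^2_\cW$. Since replacing $u$ by $P_\Phi u$ leaves $f$ unchanged, I may assume $u\in\overline{\span}\Phi(X)$, whence $\|f\|^2_{\cH_{K+G}}=\|u\|^2_\cW=\|u_1\|^2_{\cH_K}+\|u_2\|^2_{\cH_G}$. Setting $f_1:=u_1\in\cH_K$ and $f_2:=u_2\in\cH_G$, the reproducing identity yields $f_1(x)+f_2(x)=(\Phi(x),u)_\cW=f(x)$, exhibiting a specific decomposition whose squared norms sum precisely to $\|f\|^2_{\cH_{K+G}}$. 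Combined with the previous paragraph, this both proves the identity and shows the infimum is realised.

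The main obstacle I anticipate is handling the projection $P_\Phi$ carefully: the closed subspace $\overline{\span}\Phi(X)\subset\cW$ is typically proper whenever $\cH_K$ and $\cH_G$ share a nontrivial function, which is precisely why different pairs $(f_1,f_2)$ can sum to the same $f$. The essential trick is the observation that one is free to project $u$ onto $\overline{\span}\Phi(X)$ before reading off the decomposition, which converts the generic inequality $\|P_\Phi u\|_\cW\le\|u\|_\cW$ into an equality at the optimal decomposition and thereby delivers attainment of the minimum rather than merely an infimum.
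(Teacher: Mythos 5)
The paper does not actually prove Lemma \ref{kernelsum}: it is quoted from Aronszajn's 1950 paper without argument, so there is no in-paper proof to compare against. Your proof is correct and complete. It is, in essence, the feature-map rendition of Aronszajn's classical argument: he works directly with the direct sum $\cH_K\oplus\cH_G$, the map $(f_1,f_2)\mapsto f_1+f_2$, and the orthogonal complement of its kernel, whereas you package exactly the same orthogonal-projection step into Lemma \ref{featuremapconstruction} applied to the feature map $\Phi(x)=(K(x,\cdot),G(x,\cdot))$. What your route buys is economy: the identity $(\Phi(x),\Phi(y))_\cW=K(x,y)+G(x,y)$ immediately identifies $\cH_{K+G}$ with $\{f_1+f_2\}$ and gives $\|f\|^2_{\cH_{K+G}}=\|P_\Phi u\|^2_\cW\le\|u_1\|^2_{\cH_K}+\|u_2\|^2_{\cH_G}$ for every representative $u=(u_1,u_2)$, and the one genuinely delicate point --- that the infimum is attained --- is handled correctly by your observation that replacing $u$ with $P_\Phi u$ leaves the function unchanged (since $u-P_\Phi u$ is orthogonal to every $\Phi(x)$) while forcing equality in the projection inequality; the components of $P_\Phi u$ then \emph{are} the optimal $f_1,f_2$ by the reproducing property. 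The only cosmetic blemish is the inconsistent ordering of arguments in the inner products (conjugate-linearity slots), which you inherit from the paper's own statement of Lemma \ref{featuremapconstruction} and which does not affect the norms used in the argument.
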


\begin{theorem}\label{equivalentnorm}
Let $K$ and $G$ be kernels on $X$ with $\cH_K \subseteq \cH_G$. Then $\cH_K \lesssim \cH_G$ if and only if there exists some constant $\delta>0$ such that
\begin{equation}\label{equivalentnormcond}
\|e\|_{\cH_{\lambda(K,G)G - K}} \geq \delta \|e\|_{\cH_K}\mbox{ for each }e\in\cH_K \cap \cH_{\lambda(K,G)G - K}.
\end{equation}
\end{theorem}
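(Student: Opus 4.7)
The plan is to write $\lambda := \lambda(K,G)$ and $L := \lambda G - K$, which is a kernel by Proposition \ref{twoquantities}. Since $K + L = \lambda G$ and $\|f\|_{\cH_{\lambda G}} = \|f\|_{\cH_G}/\sqrt{\lambda}$ for $f \in \cH_G$, and since the upper bound $\|f\|_{\cH_G} \le \sqrt{\lambda}\|f\|_{\cH_K}$ already holds for all $f \in \cH_K$ by Proposition \ref{twoquantities}, the statement $\cH_K \lesssim \cH_G$ reduces to finding $\alpha > 0$ with $\|f\|_{\cH_{\lambda G}}^2 \ge (\alpha^2/\lambda)\|f\|_{\cH_K}^2$ for all $f \in \cH_K$.

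The key tool is Lemma \ref{kernelsum} applied to the decomposition $\lambda G = K + L$: for any $f \in \cH_{\lambda G}$,
$$
\|f\|_{\cH_{\lambda G}}^2 = \min\{\|f_1\|_{\cH_K}^2 + \|f_2\|_{\cH_L}^2 : f = f_1 + f_2,\ f_1 \in \cH_K,\ f_2 \in \cH_L\}.
$$
When $f \in \cH_K$, any such decomposition forces $f_2 = f - f_1 \in \cH_K$, hence $f_2 \in \cH_K \cap \cH_L$. Setting $e := f_2$, we obtain the convenient reformulation
$$
\|f\|_{\cH_{\lambda G}}^2 = \inf\{\|f - e\|_{\cH_K}^2 + \|e\|_{\cH_L}^2 : e \in \cH_K \cap \cH_L\}, \quad f \in \cH_K.
$$

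For necessity, assume $\cH_K \lesssim \cH_G$ with lower constant $\alpha$. Pick any $e \in \cH_K \cap \cH_L$ and specialize the above formula to $f = e$; the trivial choice of decomposition bounds the infimum by $\|e\|_{\cH_L}^2$, whence $\|e\|_{\cH_L}^2 \ge \|e\|_{\cH_{\lambda G}}^2 \ge (\alpha^2/\lambda)\|e\|_{\cH_K}^2$, giving (\ref{equivalentnormcond}) with $\delta = \alpha/\sqrt{\lambda}$. For sufficiency, given $\delta > 0$ as in (\ref{equivalentnormcond}), fix $f \in \cH_K$ and $e \in \cH_K \cap \cH_L$, and write $a := \|f\|_{\cH_K}$, $b := \|e\|_{\cH_K}$. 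The triangle inequality in $\cH_K$ gives $\|f - e\|_{\cH_K}^2 \ge (a - b)^2$, so
$$
\|f - e\|_{\cH_K}^2 + \|e\|_{\cH_L}^2 \ge (a - b)^2 + \delta^2 b^2 = (1 + \delta^2)\Bigl(b - \tfrac{a}{1+\delta^2}\Bigr)^2 + \tfrac{\delta^2}{1 + \delta^2}\,a^2 \ge \tfrac{\delta^2}{1+\delta^2}\,\|f\|_{\cH_K}^2.
$$
Taking the infimum over $e$ yields $\|f\|_{\cH_{\lambda G}}^2 \ge \frac{\delta^2}{1+\delta^2}\|f\|_{\cH_K}^2$, hence $\|f\|_{\cH_G}^2 \ge \frac{\lambda\delta^2}{1+\delta^2}\|f\|_{\cH_K}^2$, closing the proof.

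I expect the only subtle step to be the observation that for $f \in \cH_K$ every admissible decomposition in Lemma \ref{kernelsum} can be restricted to pairs with $e \in \cH_K \cap \cH_L$; after that, necessity is an immediate specialization and sufficiency is a one-variable quadratic minimization. No further machinery is needed beyond Lemmas \ref{Aronszajn}, \ref{kernelsum} and Proposition \ref{twoquantities}.
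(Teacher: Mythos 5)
Your proof is correct and follows essentially the same route as the paper's: both apply Lemma \ref{kernelsum} to the decomposition $\lambda(K,G)G = K + L$ with $L := \lambda(K,G)G - K$, observe that for $f\in\cH_K$ every admissible decomposition forces $f_2\in\cH_K\cap\cH_L$, and derive necessity from the inequality $\|e\|_{\cH_{\lambda(K,G)G}}\le\|e\|_{\cH_L}$ (which the paper obtains from $L\ll\lambda(K,G)G$ and phrases as a contradiction argument). The only differences are cosmetic: your exact one-variable quadratic minimization yields the lower constant $\delta^2/(1+\delta^2)$ in place of the paper's cruder $\tfrac12\min\{1,\delta^2\}$, a marginally sharper bound obtained by the same triangle-inequality idea.
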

\begin{proof}
For notational simplicity, put $L := \lambda(K,G)G - K$. By Proposition \ref{twoquantities}, $L$ is a kernel on $X$. Suppose that condition (\ref{equivalentnormcond}) is satisfied. Note that for each $f\in\cH_K\subseteq\cH_G$ with the decomposition $f=f_1+f_2$ where $f_1\in\cH_K$, $f_2\in\cH_{L}$, we have $f_2\in\cH_{K}\cap\cH_L$. This together with $\cH_{K}\subseteq\cH_G=\cH_{\lambda(K,G)G}$ implies by Lemma \ref{kernelsum} that for all $f\in\cH_K$
\begin{eqnarray*}
\|f\|^2_{\cH_{\lambda(K,G)G}} & = & \min_{f = f_1 + f_2} \{\|f_1\|_{\cH_K}^2 + \|f_2\|_{\cH_L}^2: \; f_1 \in \cH_K, \;f_2 \in \cH_L\} \\
 & \geq & \min_{f = f_1 + f_2} \{\|f_1\|_{\cH_K}^2 + \delta^2\|f_2\|_{\cH_K}^2: \; f_1 \in \cH_K, \;f_2 \in{\cH_L} \}\\
 & \geq & \min_{f = f_1 + f_2}\min\{1, \delta^2\}\{\|f_1\|_{\cH_K}^2 + \|f_2\|_{\cH_K}^2: \; f_1 \in \cH_K, \;f_2 \in{\cH_L} \}\\
 & \geq & \frac{1}{2}\min\{1, \delta^2\}\|f\|_{\cH_K}^2.
\end{eqnarray*}
Recall that for all $f\in\cH_G$,
$$
\|f\|_{\cH_G}=\sqrt{\lambda(K,G)}\|f\|_{\lambda(K,G)G}.
$$
By the above two equations and Proposition \ref{twoquantities}, we have for all $f\in\cH_K$ that
$$
\frac1{\sqrt{2}}\min\{1, \delta\}\sqrt{\lambda(K,G)}\|f\|_{\cH_K}\le\|f\|_{\cH_G}\le \sqrt{\lambda(K,G)}\cH_K
$$
in other words, $\cH_K\lesssim \cH_G$.

Conversely, suppose that $\cH_K\lesssim\cH_G$ but (\ref{equivalentnormcond}) does not hold for any $\delta>0$. Then for each $n\in\bN$, there exists
$g_n \in \cH_K \cap \cH_{L}$ such that
\begin{equation}\label{equivalentnormeq2}
\|g_n\|_{\cH_{L}} \leq \frac{1}{n} \|g_n\|_{\cH_K}.
\end{equation}
Since $L \ll \lambda(K,G)G$, it follows from Lemmas \ref{Aronszajn} and \ref{kernelsum} that $\cH_L\subseteq\cH_{\lambda(K,G)G}$ and
\begin{equation}\label{equivalentnormeq3}
\|g_n\|_{\cH_{G}}=\sqrt{\lambda(K,G)}\|g_n\|_{\cH_{\lambda(K,G)G}} \leq \sqrt{\lambda(K,G)}\|g_n\|_{\cH_L}\mbox{ for all }n\in\bN.
\end{equation}
Equations (\ref{equivalentnormeq2}) and (\ref{equivalentnormeq3}) imply that
$$
\|g_n\|_{\cH_G}\le \frac{\sqrt{\lambda(K,G)}}{n}\|g_n\|_{\cH_K}\mbox{ for all }n\in\bN,
$$
contradicting (\ref{equivalentnormdef}). The proof is complete.
\end{proof}

As an application of Theorem \ref{equivalentnorm}, we have the following example.

\begin{prop}
Consider the two finite polynomial kernels $K_p,K_q$ defined by (\ref{polynomialkernelp}) and (\ref{polynomialkernelq}). Then $\cH_{K_p}\lesssim\cH_{K_q}$ if and only if $p\le q$.
\end{prop}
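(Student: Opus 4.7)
The plan is to apply Theorem \ref{equivalentnorm} in the easy direction, relying on finite-dimensionality to bypass any delicate estimation. The necessity of $p\le q$ is immediate: the relation $\cH_{K_p}\lesssim \cH_{K_q}$ demands $\cH_{K_p}\subseteq\cH_{K_q}$, and by Proposition \ref{finitepolynomial} this forces $p\le q$. So the substance lies in the converse.

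Assume $p\le q$. By Proposition \ref{finitepolynomial} we have $\lambda(K_p,K_q)=1$, so Theorem \ref{equivalentnorm} reduces the task to producing a constant $\delta>0$ with $\|e\|_{\cH_{K_q-K_p}}\ge \delta\|e\|_{\cH_{K_p}}$ for every $e\in \cH_{K_p}\cap\cH_{K_q-K_p}$. I would first expand the kernels via the multinomial theorem,
$$
K_p(x,y)=\sum_{|\alpha|\le p}c_{\alpha,p}\,x^\alpha y^\alpha,\qquad c_{\alpha,p}=\binom{p}{|\alpha|}\binom{|\alpha|}{\alpha},
$$
and similarly for $K_q$, writing $K_q-K_p=\sum_{|\alpha|\le q}d_\alpha x^\alpha y^\alpha$ with $d_\alpha=c_{\alpha,q}-c_{\alpha,p}$ for $|\alpha|\le p$ and $d_\alpha=c_{\alpha,q}$ for $p<|\alpha|\le q$. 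A short binomial-ratio check shows $d_\alpha>0$ whenever $|\alpha|\ge 1$, while $d_0=0$ since $c_{0,p}=c_{0,q}=1$.

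Now I would invoke Lemma \ref{featuremapconstruction} (in the guise of Proposition \ref{characterizeHilbertSchmidt}) with the feature coordinates $\phi_\alpha(x)=x^\alpha$: since monomials of bounded degree are linearly independent, the relevant feature maps have dense span in the corresponding finite-dimensional $\ell^2$ spaces. This identifies $\cH_{K_p}$ as the space of polynomials of degree at most $p$ with squared norm $\sum_{|\alpha|\le p}|f_\alpha|^2/c_{\alpha,p}$, and $\cH_{K_q-K_p}$ as the polynomials of degree at most $q$ with vanishing constant term and squared norm $\sum_{0<|\alpha|\le q}|f_\alpha|^2/d_\alpha$. Their intersection $V$ is the finite-dimensional space of polynomials of degree in $[1,p]$.

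Because $V$ is finite-dimensional, the restrictions of $\|\cdot\|_{\cH_{K_p}}$ and $\|\cdot\|_{\cH_{K_q-K_p}}$ to $V$ are automatically equivalent; indeed one can read off the explicit constant
$$
\delta^2=\min\Bigl\{\frac{c_{\alpha,p}}{c_{\alpha,q}-c_{\alpha,p}}:0<|\alpha|\le p\Bigr\},
$$
which is a positive minimum over a finite set (strict positivity uses $p<q$; if $p=q$ then trivially $\cH_{K_p}=\cH_{K_q}$ and the claim is immediate). This supplies the constant required by Theorem \ref{equivalentnorm}, concluding $\cH_{K_p}\lesssim\cH_{K_q}$. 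The main conceptual hurdle is simply tracking the Hilbert-Schmidt coefficient identifications cleanly, especially the vanishing $d_0=0$ that shapes the intersection; once that bookkeeping is in place the finite-dimensionality disposes of the analytic estimate for free.
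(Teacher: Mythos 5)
Your proof is correct, but it takes a genuinely different route from the paper's. You apply Theorem \ref{equivalentnorm} head-on to the pair $(K_p,K_q)$: you expand both kernels in the monomial feature coordinates $x^\alpha$, identify $\cH_{K_p}\cap\cH_{K_q-K_p}$ as the (finite-dimensional) space of polynomials of degree between $1$ and $p$, and extract the explicit constant $\delta^2=\min\bigl\{\binom{p}{j}/(\binom{q}{j}-\binom{p}{j}):1\le j\le p\bigr\}$ (your ratio $c_{\alpha,p}/(c_{\alpha,q}-c_{\alpha,p})$ indeed depends only on $j=|\alpha|$, since the multinomial factors cancel). The paper instead sidesteps the estimate entirely by introducing the auxiliary kernel $K(x,y):=\sum_{j=0}^p\binom{q}{j}(x,y)^j$, for which $K_q-K$ contains only monomials of degree strictly greater than $p$; hence $\cH_K\cap\cH_{K_q-K}=\{0\}$ and condition (\ref{equivalentnormcond}) holds vacuously, after which one transfers the conclusion to $K_p$ using $\cH_K=\cH_{K_p}$ with equivalent norms (Proposition \ref{characterizeHilbertSchmidt} on a finite index set). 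What the paper's trick buys is the avoidance of any computation on the intersection; what your argument buys is an explicit equivalence constant and a proof that does not require checking that $\lesssim$ survives replacing a kernel by a norm-equivalent one. Both hinge on the same finite-dimensionality of the underlying coefficient spaces, and your bookkeeping of $d_0=0$ (so that $\cH_{K_q-K_p}$ omits constants) is the one point that genuinely needs care; you handle it correctly.
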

\begin{proof}
By Proposition \ref{finitepolynomial}, $\cH_{K_p}\subseteq\cH_{K_q}$ if and only if $p\le q$. Thus, if $\cH_{K_p}\lesssim\cH_{K_q}$ then $p\le q$. Suppose that $p\le q$. We introduce another kernel $K$ on $\bR^d$ by setting
$$
K(x,y):=\sum_{j=0}^p {{q}\choose{j}} (x, y)^j,\ \ x,y\in\bR^d.
$$
Then by Proposition \ref{characterizeHilbertSchmidt}, $\cH_K\subseteq\cH_{K_q}$ and $\cH_{K}=\cH_{K_p}$. It is clear that $\cH_K\cap{\cH_{K_q-K}}=\{0\}$. By Theorem \ref{equivalentnorm}, $\cH_K\lesssim\cH_{K_q}$. As $\cH_{K}=\cH_{K_p}$, we have $\cH_{K_p}\lesssim\cH_{K_q}$. The proof is complete.
\end{proof}

Before moving on, we make a simple observation that if two kernels $K,G$ on $X$ satisfy $\cH_K\lesssim\cH_G$ and $\cH_K\ne\cH_G$ then $\cH_K$ can not be dense in $\cH_G$. For instances, given two Gaussian kernels $G_{\gamma_1}$, $G_{\gamma_2}$ with $\gamma_1<\gamma_2$. As $\cH_{G_{\gamma_2}}\subseteq \cH_{G_{\gamma_1}}$ and $\cH_{G_{\gamma_2}}$ is dense in but not equal to $\cH_{G_{\gamma_1}}$, $G_{\gamma_1}$ is not a weak refinement of $G_{\gamma_2}$.

The main purpose of this section is to present two characterizations of the equivalent norm inclusion that are widely applicable to translation invariant kernels and Hilbert-Schmidt kernels. As the study would be similar to that in \cite{XZ2}, we shall omit the proof and examples.

Let $\mu, \nu$ be two finite positive Borel measures on a topological space $Y$. Set
$$
\omega := \frac{\mu + \nu}{2} + \frac{|\mu - \nu|}{2},
$$
where $|\mu - \nu|$ denotes the total variation measure of $\mu - \nu$. Then
$\mu$ and $\nu$ are absolutely continuous with respect to $\omega$. Given a function $\phi : X \times Y \rightarrow \bC$ such that
$\phi(x, \cdot) \in L^2_\omega(Y)$ for all $x\in X$ and
\begin{equation}\label{densenessphi}
\overline{\span }\{\phi(x, \cdot) : x\in X\} = L^2_\omega(Y),
\end{equation}
we introduce two kernels $K_{\mu}$, $K_{\nu}$ on $X$ by setting
\begin{equation}\label{kernelkmuknu}
K_{\mu}(x, y) := (\phi(x, \cdot), \phi(y, \cdot))_{L^2_\mu(Y)}, \quad
K_{\nu}(x, y) := (\phi(x, \cdot), \phi(y, \cdot))_{L^2_\nu(Y)},\ \ x,y\in X.
\end{equation}
Our task is to characterize the equivalent inclusion relation $\cH_K \lesssim \cH_G$ in terms of the measures $\mu$ and $\nu$. To this end,
we write $\mu \lesssim \nu$ if $\mu \ll \nu$ and there exist positive constants $\alpha,\beta$ such that $\alpha \leq d\mu / d\nu \leq \beta$
almost everywhere on $\{t\in Y:\ \frac{d\mu}{d\nu}(t)>0\}$ with respect to $\nu$.

The following characterization theorem can be proved by arguments similar to those in \cite{XZ2}.

\begin{theorem}\label{equivalentnormcharacterizeXZ}
Suppose that $\phi : X \times Y \rightarrow \bC$ satisfies (\ref{densenessphi}) and $K_{\mu}$,
$K_{\nu}$ are defined by (\ref{kernelkmuknu}). Then $\cH_{\mu} \lesssim \cH_{\nu}$ if and only
if $\mu \lesssim \nu$ .
\end{theorem}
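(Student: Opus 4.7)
The plan is to model the proof on Proposition~\ref{characterizationtranslation}, refining it to capture the additional norm equivalence. I would proceed in four steps.

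Step one is to obtain a concrete description of $\cH_{K_\mu}$ and $\cH_{K_\nu}$ via Lemma~\ref{featuremapconstruction}. Setting $h_\mu := d\mu/d\omega$ and $h_\nu := d\nu/d\omega$, the definition of $\omega$ forces $h_\mu,h_\nu\in[0,1]$, so $L^2_\omega(Y)\subseteq L^2_\mu(Y)$ and $L^2_\omega(Y)\subseteq L^2_\nu(Y)$ continuously. I would then show that $\span\{\phi(x,\cdot):x\in X\}$ is dense in $L^2_\mu(Y)$ (and similarly in $L^2_\nu(Y)$) by a duality argument: if $g\in L^2_\mu(Y)$ is orthogonal in $L^2_\mu(Y)$ to every $\phi(x,\cdot)$, then $gh_\mu\in L^2_\omega(Y)$ (using $h_\mu\le 1$) is orthogonal in $L^2_\omega(Y)$ to every $\phi(x,\cdot)$, so $gh_\mu=0$ $\omega$-a.e., which forces $g=0$ $\mu$-a.e. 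Lemma~\ref{featuremapconstruction} then gives an isometric isomorphism $L^2_\mu(Y)\to\cH_{K_\mu}$ via $u\mapsto f_u^\mu$ with $f_u^\mu(x):=\int_Y\phi(x,t)\overline{u(t)}\,d\mu(t)$, and the analogous statement for $\nu$.

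For the sufficiency direction, assume $\mu\lesssim\nu$ with $\alpha\le h\le\beta$ on $\{h>0\}$, where $h:=d\mu/d\nu$. Given $f=f_u^\mu\in\cH_{K_\mu}$, I define $v:=uh\cdot\chi_{\{h>0\}}$. A change of measure gives $\|v\|^2_{L^2_\nu(Y)}=\int|u|^2h^2\,d\nu$, and bounding $h^2=h\cdot h$ above and below on $\{h>0\}$ yields $\alpha\|u\|^2_{L^2_\mu(Y)}\le\|v\|^2_{L^2_\nu(Y)}\le\beta\|u\|^2_{L^2_\mu(Y)}$. Since $h$ is real-valued and $d\mu=h\,d\nu$, a direct computation gives $f_v^\nu=f_u^\mu$, so $f\in\cH_{K_\nu}$; combined with the isometries of Step one this produces the norm equivalence with constants $\sqrt{\alpha}$ and $\sqrt{\beta}$.

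For necessity, I would first deduce $\mu\ll\nu$ with $d\mu/d\nu$ essentially bounded above from $\cH_{K_\mu}\subseteq\cH_{K_\nu}$ alone. By Lemma~\ref{Aronszajn} there exists $\lambda>0$ with $K_\mu\ll\lambda K_\nu$; applying positive semi-definiteness to finite combinations $\sum c_j\phi(x_j,\cdot)$ and using density in $L^2_\omega(Y)$, I would obtain $\int|g|^2\,d\mu\le\lambda\int|g|^2\,d\nu$ for every $g\in L^2_\omega(Y)$, and specializing to $g=\chi_A$ for Borel $A$ gives $\mu\le\lambda\nu$, hence $d\mu/d\nu\le\lambda$ $\nu$-a.e. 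For the lower bound on $d\mu/d\nu$, I would argue by contradiction: if $h$ has no essential positive lower bound on $\{h>0\}$, then $A_n:=\{0<h<1/n\}$ satisfies $\mu(A_n)>0$ for arbitrarily large $n$, and taking $u_n:=\chi_{A_n}$ with the corresponding $v_n:=u_n h$ gives $\|v_n\|^2_{L^2_\nu(Y)}=\int_{A_n}h^2\,d\nu\le n^{-1}\mu(A_n)=n^{-1}\|u_n\|^2_{L^2_\mu(Y)}$. By Step one's isometries this forces $\|f_{u_n}^\mu\|_{\cH_{K_\nu}}/\|f_{u_n}^\mu\|_{\cH_{K_\mu}}\to 0$, contradicting the assumed norm equivalence. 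The main obstacle is Step one, namely inheriting density in $L^2_\mu(Y)$ and $L^2_\nu(Y)$ from density in $L^2_\omega(Y)$; once the feature-map identification of the two RKHS as $L^2_\mu(Y)$ and $L^2_\nu(Y)$ is in place, the remaining Radon--Nikodym manipulations are routine refinements of the argument for Proposition~\ref{characterizationtranslation}.
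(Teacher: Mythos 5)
Your proof is correct, and it takes the approach the paper intends: the paper omits the proof of Theorem~\ref{equivalentnormcharacterizeXZ}, deferring to arguments ``similar to those in \cite{XZ2}'', and your route --- identifying $\cH_{K_\mu}$ and $\cH_{K_\nu}$ isometrically with $L^2_\mu(Y)$ and $L^2_\nu(Y)$ via Lemma~\ref{featuremapconstruction} after transferring density from $L^2_\omega(Y)$ (using $d\mu/d\omega\le 1$), then doing the Radon--Nikodym bookkeeping --- is exactly that argument. The two subtler steps both check out: the extension of $\int|g|^2\,d\mu\le\lambda\int|g|^2\,d\nu$ from $\span\{\phi(x,\cdot)\}$ to characteristic functions by $L^2_\omega$-continuity of both sides, and the contradiction on $A_n=\{0<d\mu/d\nu<1/n\}$, where $\mu(A_n)>0$ and $\|v_n\|^2_{L^2_\nu}\le n^{-1}\|u_n\|^2_{L^2_\mu}$ violates the lower norm bound in (\ref{equivalentnormdef}).
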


The above theorem has a particular application to Hilbert-Schmidt kernels. For two nonnegative functions $a,b$ on $\bN$, we denote by $a\lesssim b$ if $\supp a \subseteq \supp b$ and
there exist two positive constants $\alpha$ and $\beta$ such that
$\alpha a_n \leq b_n \leq \beta a_n$ for each $n \in \supp a$. Here $\supp a:=\{n\in \bN:a_n\ne0\}$. Recall the definition of Hilbert-Schmidt kernels (\ref{kernelka}) and (\ref{kernelkb}) through a sequence of functions (\ref{phinx}).

\begin{prop}\label{equivalentnormcharacterizeXZHilbertSchmidt}
Suppose that $\span\{\Phi(x):x\in X\}$ is dense in both $\ell_a^2(\bN)$ and $\ell_b^2(\bN)$. Then $\cH_{K_a} \lesssim \cH_{K_b}$ if and only if $a\lesssim b $.
\end{prop}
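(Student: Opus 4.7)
The plan is to reduce the statement to an elementary weighted $\ell^2$ comparison by appealing to the explicit representation of $\cH_{K_a}$ and $\cH_{K_b}$ already exploited in the proof of Proposition \ref{characterizeHilbertSchmidt}. Under the density assumption, Lemma \ref{featuremapconstruction} provides isometric isomorphisms $\ell^2_a(\bN)\cong\cH_{K_a}$ and $\ell^2_b(\bN)\cong\cH_{K_b}$ via $c\mapsto f_c(\cdot)=\sum_n a_nc_n\overline{\phi_n(\cdot)}$; in particular, the representative of a given function of $\cH_{K_b}$ as some $f_{\tilde c}$ is unique.

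For the necessity, I would start from $\cH_{K_a}\subseteq\cH_{K_b}$, which by Proposition \ref{characterizeHilbertSchmidt} already forces $\supp a\subseteq\supp b$ together with $a_n/b_n$ bounded above on $\supp a$. To extract the lower comparison hidden in $a\lesssim b$, I would test the norm equivalence (\ref{equivalentnormdef}) on the Kronecker delta sequences $c=(\delta_{n,k})_{n\in\bN}$ for $k\in\supp a$. A direct calculation gives $\|f_c\|^2_{\cH_{K_a}}=a_k$, while the unique $\ell^2_b(\bN)$-representative of $f_c$ is $\tilde c_n=(a_k/b_k)\delta_{n,k}$, so that $\|f_c\|^2_{\cH_{K_b}}=a_k^2/b_k$. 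Reading (\ref{equivalentnormdef}) on these test functions then yields $\alpha^2 b_k\le a_k\le \beta^2 b_k$ for every $k\in\supp a$, which is exactly $a\lesssim b$.

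For the sufficiency, suppose $a\lesssim b$ with constants $\alpha,\beta>0$. Since $a_n\le (1/\alpha)b_n$ holds for every $n\in\bN$ (trivially off $\supp a$), Proposition \ref{characterizeHilbertSchmidt} gives $\cH_{K_a}\subseteq\cH_{K_b}$. For an arbitrary $f=f_c\in\cH_{K_a}$, one may assume without loss of generality that $c$ is supported in $\supp a$; then the $\ell^2_b(\bN)$-representative of $f_c$ is $\tilde c_n=a_nc_n/b_n$ for $n\in\supp a$ and $0$ otherwise, so that
\[
\|f_c\|^2_{\cH_{K_b}}=\sum_{n\in\supp a}\frac{a_n^2|c_n|^2}{b_n}=\sum_{n\in\supp a}\frac{a_n}{b_n}\,a_n|c_n|^2.
\]
The pointwise bounds $1/\beta\le a_n/b_n\le 1/\alpha$ on $\supp a$ immediately deliver a two-sided equivalence between $\|f_c\|_{\cH_{K_b}}$ and $\|f_c\|_{\cH_{K_a}}$, proving $\cH_{K_a}\lesssim \cH_{K_b}$.

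I do not foresee a genuine obstacle. The whole argument is driven by the observation that, once the isomorphisms from Lemma \ref{featuremapconstruction} are in hand, the $\cH_{K_b}$-norm of $f_c$ is nothing but the weighted sum with weights $a_n^2/b_n$, so the norm-equivalence question is tautologically a two-sided pointwise estimate on $a_n/b_n$. The only mild technicality is the correct handling of indices where $a_n$ or $b_n$ vanishes, which is absorbed by the support containment $\supp a\subseteq\supp b$ inherited from the set inclusion of the RKHS.
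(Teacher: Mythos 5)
Your proof is correct, but it follows a genuinely different route from the paper. The paper gives no direct argument for Proposition \ref{equivalentnormcharacterizeXZHilbertSchmidt}: it presents the result as a specialization of Theorem \ref{equivalentnormcharacterizeXZ} (take $Y=\bN$ and $\mu,\nu$ the discrete measures with weights $a_n,b_n$), and the proof of that theorem is itself omitted with a pointer to the arguments of \cite{XZ2}. You instead work entirely inside the sequence-space model of Lemma \ref{featuremapconstruction}: necessity by testing the norm equivalence (\ref{equivalentnormdef}) on the Kronecker deltas $c=(\delta_{n,k})_n$, $k\in\supp a$, which yields $\|f_c\|^2_{\cH_{K_a}}=a_k$ and $\|f_c\|^2_{\cH_{K_b}}=a_k^2/b_k$ and hence the two-sided bound on $a_k/b_k$; sufficiency by exhibiting the representative $\tilde c_n=a_nc_n/b_n$ and reading off $\|f_c\|^2_{\cH_{K_b}}=\sum_{n\in\supp a}(a_n/b_n)a_n|c_n|^2$. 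This is exactly the discrete instantiation of what the measure-theoretic argument would produce, but your version is self-contained within the present paper (it relies only on Lemma \ref{featuremapconstruction} and Proposition \ref{characterizeHilbertSchmidt}, not on the unproved Theorem \ref{equivalentnormcharacterizeXZ}) and makes the equivalence constants explicit, at the cost of not covering the general continuous-measure setting that Theorem \ref{equivalentnormcharacterizeXZ} addresses. Two small points worth tightening: the "unique representative" in $\ell^2_b(\bN)$ is unique only modulo sequences supported off $\supp b$, where the seminorm vanishes (your reduction to $c$ supported in $\supp a$ handles the analogous issue on the $a$ side); and the containment $\supp a\subseteq\supp b$ needed to form $\tilde c$ in the necessity step should be quoted from the inclusion $\cH_{K_a}\subseteq\cH_{K_b}$ via Proposition \ref{characterizeHilbertSchmidt} before the delta test is run, as you indeed do.
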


We want to reemphasize that our results, though similar to those in \cite{XZ2}
for refinement of reproducing kernels, much increase the chance of refining an existing kernel. Taking polynomial kernels as an instance, for two such kernels
$$
K(x,y):=\sum_{j=0}^N a_j (x,y)^j,\ \ G(x,y):=\sum_{k=0}^M b_k(x,y)^k,\ \ x,y\in\bR^d,
$$
where $a_j,b_k$ are positive constants. By Proposition \ref{equivalentnormcharacterizeXZHilbertSchmidt}, $\cH_{K_a} \lesssim \cH_{K_b}$
if $N\le M$. However, asking $K_b$ to be a refinement kernel of $K_a$ would impose a strong additional requirement that $a_j=b_j$ for all $j\in\bN_N$. A more concrete example is the kernels $K_p,K_q$ appeared in (\ref{polynomialkernelp}) and (\ref{polynomialkernelq}). By our discussion, if $p<q$ then $K_q$ is a weak refinement but not a refinement of $K_q$.

\bibliographystyle{amsplain}

\begin{thebibliography}{10}

\bibitem{Aronszajn} N. Aronszajn, Theory of reproducting kernels, {\it Trans. Amer. Math. Soc.} \textbf{68} (1950), 337--404.

\bibitem{Berlinet} A. Berlinet and C. Thomas-Agnan, {\it Reproducing Kernel Hilbert Spaces in Probability and Statistics}, Kluwer Academic Publishers, Boston, MA, 2004.

\bibitem{Bochner} S. Bochner, {\it Lectures on Fourier Integrals with an author's supplement on monotonic functions,
Stieltjes integrals, and harmonic analysis}, Annals of Mathematics
Studies \textbf{42}, Princeton University Press, New Jersey, 1959.

\bibitem{CuckerSmale} F. Cucker and S. Smale, On the mathematical foundations of learning, {\it Bull. Amer. Math. Soc.}
\textbf{39} (2002), 1--49.

\bibitem{Cui2007} M. Cui and F. Geng, Solving singular two-point boundary value problem in reproducing kernel space, {\it J. Comput. Appl. Math.} \textbf{205} (2007), 6--15.

\bibitem{DR} M. F. Driscoll, The reproducing kernel Hilbert space structure of the smaple paths of a Gaussian process,
 {\it Z. Wahrsch. Verw. Geb} \textbf{26} (1973), 309--316.

\bibitem{Evgeniou2000} T. Evgeniou, M. Pontil and T. Poggio, Regularization networks and support vector machines, {\it Adv. Comput. Math.} \textbf{13} (2000), 1--50.

\bibitem{FMP} C. H. FitzGerald, C. A. Micchelli and A. Pinkus, Functions that preserve families of positive semidefinite matrices, {\it Linear Algegra Appl.} \textbf{221}, (1995), 83--102 .

\bibitem{Fukumizu2004} K. Fukumizu, F. R. Bach and M. I. Jordan, Dimensionality reduction for supervised learning with reproducing kernel Hilbert spaces, {\it J. Mach. Learn. Res.} \textbf{5} (2004), 73--99.


\bibitem{Franke1998} C. Franke and R. Schaback, Solving partial differential equations by collocation using radial basis functions, {\it Appl. Math. Comput.} \textbf{93} (1998), 73--82.


\bibitem{HJ} R. A. Horn and C. R. Johnson, {\it Topics in Matrix Analysis}, Cambridge University Press, Cambridge, 1991.


\bibitem{Mercer} J. Mercer, Functions of positive and negative type and
their connection with the theorey of integral equations, {\it
Philos. Trans. R. Soc. Lond. Ser. A Math. Phys. Eng. Sci.}
\textbf{209} (1909), 415--446.

\bibitem{NW} M. Z. Nashed and G. G. Walter, General
sampling theorems for functions in reproducing kernel Hilbert
spaces, {\it Math. Control Signals Systems} \textbf{4} (1991),
363--390.

\bibitem{Opfer} R. Opfer, Multiscale kernels, {\it Adv. Comput. Math.} \textbf{25} (2006), 357--380.


\bibitem{SIJ} I. J. Schoenberg, Metric spaces and completely monotone functions, {\it Ann. of Math.(2)} \textbf{39}, (1938), 811--841 .

\bibitem{SIJ2} I. J. Schoenberg, Positive definite functions on spheres, {\it Duke. Math. J.} \textbf{9} (1942), 96--108.

\bibitem{SS} B. Sch\"{o}lkopf and A. J. Smola, {\it Learning with Kernels: Support Vector Machines, Regularization,
Optimization, and Beyond}, MIT Press, Cambridge, 2002.


\bibitem{SC} J. Shawe-Taylor, N. Cristinini, {\it Kenel Methods for Pattern Analysis}, Cambridge University Press, Cambridge, 2004.

\bibitem{Sriperumbudur2010} B. K. Sriperumbudur, A. Gretton, K. Fukumizu, B. Sch\"{o}lkopf and G. R.G. Lanckriet, Hilbert space embeddings and metrics on probability measures, {\it J. Mach. Learn. Res.} \textbf{11} (2010), 1517--1561.

\bibitem{Stein} E. M. Stein, {\it Singular Integrals And Differentiability Properties of Functions}, Princeton University Press, Princeton, 1971.

\bibitem{Vapnik1998} V. N. Vapnik, {\it Statistical Learning Theory}, Wiley, New York, 1998.


\bibitem{Wendland} H. Wendland, {\it Scattered Data Approximation}, Cambridge University Press, Cambridge, 2005.

\bibitem{XZ} Y. Xu and H. Zhang, Refinable kernels, {\it J. Mach. Learn.
Res.} \textbf{8} (2007), 2083--2120.

\bibitem{XZ2} Y. Xu and H. Zhang, Refinement of reproducing kernels,
{\it J. Mach. Learn. Res.} \textbf{10} (2009), 107--140.

\bibitem{Ylvisaker} N. D. Ylvisaker, On linear estimation for regression problems on time series, {\it Ann. Math. Statist.} \textbf{33} (1962), 1077--1084.

\bibitem{Zhanga} H. Zhang and J. Zhang, Frames, Riesz bases, and sampling expansions in Banach spaces via semi-inner products, {\it Appl. Comput. Harmon. Anal.} \textbf{31} (2011), 1--25.
\end{thebibliography}

\end{document}